\definecolor{MyLinkColor}{rgb}{0,0,0.4}
\numberwithin{equation}{section}
\newcommand{\dv}{\mathop{\rm div}\nolimits}
\newcommand{\im}{\mathop{\rm Im}\nolimits}
\newcommand{\tr}{\mathop{\rm tr}\nolimits}
\newcommand{\re}{\mathop{\rm Re}\nolimits}
\newcommand{\0}{\Omega}
\newcommand{\e}{\varepsilon}
\newcommand{\p}{\partial}
\newcommand{\wt}{\widetilde}
\newcommand{\ov}{\overline}
\newcommand{\G}{\Gamma}
\newcommand{\A}{\mathcal{A}}
\newcommand{\bA}{\mathbb{A}}
\newcommand{\B}{\mathcal{B}}
\newcommand{\bB}{\mathbb{B}}
\newcommand{\kH}{\mathcal{H}}
\newcommand{\kL}{\mathcal{L}}
\newcommand{\T}{\mathcal{T}}
\newcommand{\V}{\mathcal{V}}
\newcommand{\C}{\mathbb{C}}
\newcommand{\R}{\mathbb{R}}
\newcommand{\s}{\mathbb S}
\newcommand{\N}{\mathbb{N}}
\newcommand{\Z}{\mathbb{Z}}
\newcommand{\X}{\mathbb{X}}
\newcommand{\Y}{\mathbb{Y}}
\DeclareMathOperator{\supp}{supp}
\newtheorem{thm}{Theorem}[section]
\newtheorem{prop}[thm]{Proposition}
\newtheorem{lemma}[thm]{Lemma}
\newtheorem{cor}[thm]{Corollary}
\newtheorem{rem}[thm]{Remark}
\newtheorem{rems}[thm]{Remarks}
\title[The domain of parabolicity for the Muskat problem]{The domain of parabolicity for the Muskat problem}
\author[J. Escher]{Joachim Escher}
\address{Institut f\" ur Angewandte Mathematik, Leibniz Universit\" at Hannover, 3016 Hannover, Deutschland}
\email{escher@ifam.uni-hannover.de}
\email{matioc@ifam.uni-hannover.de}
\email{walker@ifam.uni-hannover.de}
\author[B.--V. Matioc]{Bogdan--Vasile Matioc}
\author[Ch. Walker]{Christoph Walker}
\subjclass[2010]{35R37; 35K55;   35Q35}
\keywords{Muskat problem; Rayleigh-Taylor condition; Dirichlet-Neumann operator, Diffraction problem}
\begin{document}

\begin{abstract}
We address the well-posedness of the Muskat problem in a   periodic geometry and in a setting which allows us to consider general initial and boundary data,  gravity effects, as well as surface tension effects.
In the absence of surface tension we prove that the  Rayleigh-Taylor   condition identifies a domain of parabolicity for the Muskat problem. This property is used to establish the well-posedness of the problem.
In the presence of surface tension effects the Muskat problem is of parabolic type for general initial and boundary data. 
As a bi-product of our analysis we obtain that Dirichlet-Neumann type operators associated with certain diffraction problems are negative generators of strongly continuous and analytic semigroups in the scale of small H\"older spaces.
\end{abstract}

\maketitle

\tableofcontents


\section{Introduction}\label{Sec:1}
We study the evolution of two  vertically  superposed (or horizontally adjacent) immiscible  layers of Newtonian fluids with (possibly) different densities and viscosities in a two-dimensional periodic porous medium or Hele-Shaw cell when allowing for both gravity and surface tension effects.  
The pressure on  the fixed flat boundary of the lower layer is prescribed and the upper fluid layer is assumed to be bounded from above by air at uniform pressure. 
This leads to a moving boundary problem for the interface between the two layers, the  interface between the upper layer and the air, and the velocity potentials in the two fluid layers.
The associated mathematical model is the Muskat problem, which was originally proposed in \cite{Mu34} as a model for the encroachment of water into an oil sand.      
 It is given in \eqref{PB} below in the absence of surface tension effects and accordingly in \eqref{eq:SK} when allowing for surface tension effects.

Muskat problems have been studied extensively in the last two decades. 
Surface tension effects are included  in the  papers  \cite{FT03, HTY97, EM11a, EMM12a, SP16x} where questions related to well-posedness (for small initial data) and stability properties of trivial, that is, circular or flat, and finger-shaped equilibria are addressed
(see  \cite{EEM09c} for a classification of equilibria).  In this context arbitrary initial data are considered only in the very recent monograph \cite{SP15x} and in \cite{SP16x}.
 There is a much larger list of references dealing with the Muskat problem without surface tension effects, the methods used in the studies being numerous and quite different. 
  The well-posedness property  is established in \cite{Y96, Y03} by using Newton's iteration method; the references \cite{A04, CCG11}  use energy estimates 
  (see also \cite{BCG14,  CG07, CG10, CGO14} for the case of fluids with equal viscosities); \cite{EM11a, EMM12a} rely on abstract parabolic theory and  the continuous maximal regularity due to Da Prato and Grisvard \cite{DG79};
  \cite{SCH04} employs -- in the absence of gravity effects -- methods from complex analysis and a version of the Cauchy-Kowalewski theorem. 
  Existence of solutions for nonregular initial data is shown in \cite{BV14} by means of a fixed point argument.
  There are various interesting phenomena established  for fluids with equal viscosities: global existence of strong and weak solutions for initial data which are bounded by explicit constants  \cite{CCGS13, GB14},
  existence of initial data for which solutions turn over \cite{CCFGL11, CCFGL12, CCFG13}, or the absence of squirt or splash singularities \cite{CG10, FF14, GS14}.
  
 An important role in the study of the Muskat problem is played by the Rayleigh-Taylor condition, which is a sign restriction on the jump of the gradient of the pressure in normal direction along
 an interface that separates two phases (see \eqref{CT1} below for more details) and was originally found within the linear theory \cite{ST58}.
  In the absence of surface tension effects, the paper \cite{EM11a} was (one of) the first in which it was proved that the Muskat problem has, at least for small initial and boundary data, a parabolic character provided the Rayleigh-Taylor condition holds.
For general initial data the well-posedness of the problem in different geometries is also implied 
by the Rayleigh-Taylor condition  \cite{A04, CCG11, SCH04, Y96, Y03}. However, it is worth mentioning that the true character of the problem  
was not revealed in any of the just cited papers.

 In this paper we now prove for arbitrary (sufficiently smooth) initial data that the Muskat problem with and without surface tension effects has a parabolic character. More precisely, 
 when neglecting surface tension we establish the parabolicity of the problem 
provided the Rayleigh-Taylor condition  holds. 
  This enables us to use Da Prato and Grisvard's abstract parabolic theory, in particular continuous maximal regularity, in order to prove the well-posedness of this problem, cf. Theorem~\ref{MT1}.
  Having two moving interfaces, we actually need to impose the Rayleigh-Taylor at each of them.
 We also show that the Muskat problem with surface tension is parabolic for  arbitrary (sufficiently smooth) initial and boundary data, the corresponding well-posedness result being stated in  Theorem \ref{MT2}.
  As a bi-product of our analysis we show in Proposition \ref{L:DN1} and Remark~\ref{R:1K} that Dirichlet-Neumann type operators associated with certain diffraction problems are negative generators 
  of strongly continuous and analytic semigroups in the scale of small H\"older spaces.

 It is worth to  emphasize that the abstract parabolic setting mentioned above appears to be one of the few 
  where the Muskat problem can be handled both with or without surface tension effects and with general boundary data. In addition, our analysis allows us to handle two fluids with possibly different viscosities or densities or even to neglect the effects of gravity (the latter being reasonable, for example, when the Hele-Shaw cell is not vertical, but horizontal or also in microfluidic models). In many studies these aspects were not taken into account.\\

The paper is organized as follows. In Section \ref{Sec:2} we present the Muskat problem without surface tension and the first main result Theorem \ref{MT1}, whose proof requires some preparation.
In Section~\ref{Sec:3} we first  discuss the solvability of a general diffraction problem and recast the Muskat problem as a fully nonlinear and nonlocal evolution equation.
We then show in Sections \ref{Sec:4}-\ref{Sec:6} that the Fr\'echet derivative of the operator associated with this evolution problem is an analytic generator.
For this we use localization techniques in the spirit of \cite{ES95}, but such that we keep the setting of periodic functions. The proof of Theorem \ref{MT1} is then a consequence of this generator result.
The well-posedness of the Muskat problem with surface tension effects is addressed in Section \ref{Sec:7}.

\section{The   Muskat problem without surface tension effects}\label{Sec:2}

 To set the stage we need some notation. In what follows $\s$ denotes the unit circle $\R/(2\pi\Z)$ meaning that functions depending on  $x\in\s$ are $2\pi$-periodic with respect to the real variable $x$.
Given $m\in\N$ and $\beta\in(0,1)$, the small H\"older space $h^{m+\beta}(\s)$ stands for the closure of the smooth functions $C^{\infty}(\s)$ in $C^{m+\beta}(\s).$
It is well-known that $h^{m+\beta}(\s)$ is a true subspace of the classical H\"older space $C^{m+\beta}(\s),$ cf. e.g. \cite{EMM12a}, and that $C^r(\s)$ is densely embedded in $h^s(\s)$ if $r>s>0$. 
Recall  that $h^{m}(\s)=C^m(\s)$ for $m\in\N$.
Similarly, given two functions $\phi,\psi\in C(\s)$ with $\phi(x)<\psi(x)$ for all $x\in\s$ and setting 
\begin{equation}\label{Def1}
\Omega:=\Omega(\phi,\psi):=\{(x,y)\,:\, \text{$x\in\s$ and $\phi(x)<y<\psi(x)$}\},
\end{equation}
 we denote by
$h^{m+\beta}(\0)$ the closure of the smooth functions 
$C^{\infty}(\ov \0)$ in $C^{m+\beta}(\ov \0)$.
As before,   $C^r(\ov \0)$ is densely embedded in  $h^s(\0)$ if $r>s$ provided that $\phi,\psi\in h^s(\s).$
 
Let $\alpha\in(0,1)$ and $d<0$ be fixed constants and set
\[\V:=\{(f,h)\in (h^{2+\alpha}(\s))^2\,:\,d<f<h\}.\]
For each pair  $(f,h)\in \V$  we define
$\Omega(f):=\Omega(d,f)$ and $\Omega(f,h)$ according to \eqref{Def1}.
We look for a pair of functions $(f,h):[0,T_0)\to \V$ with $T_0>0$ describing the evolution of the interfaces $\Gamma(f):=[y=f]$ and $\Gamma(h):=[y=h]$
that bound two  incompressible and immiscible Newtonian fluid layers  in a porous medium and at constant temperature.
At each time instant $t\in[0,T_0),$ the domain $\0(f(t))$ is assumed to be occupied by a fluid with density $\rho_-$ and viscosity $\mu_-$, respectively $\Omega(f(t),h(t))$  is the domain occupied by a second  
fluid with density $\rho_+$ and viscosity $\mu_+.$ Note that neither the densities nor the viscosities need to be equal in what follows.
We define the velocity potentials 
\begin{equation}\label{VP}
 u_\pm:=p_\pm +g\rho_\pm y,
\end{equation}
with $g$ being  the Earth's gravity  and $p_\pm$ the fluids' pressures  with densities $\rho_\pm$.  Our results 
hold also true when neglecting gravity, that is, when $g=0$, so we assume  $g\geq0$ in the following. 
The velocity  fields $\vec v_\pm$ then obey Darcy's law, see \cite{Mu34},
\[
\vec v_-=-\frac{k}{\mu_-}\nabla u_-\quad\text{in $\0(f)$}\qquad\ \text{and} \qquad \ \vec v_+=-\frac{k}{\mu_+}\nabla u_+ \quad\text{in $\0(f,h)$,}
\]
and the incompressibility condition reads 
\[
\dv v_+=0\quad\text{in $\0(f)$}\qquad\ \text{and} \qquad \ \dv v_-=0 \quad\text{in $\0(f,h)$.}
\]
Here,  $k>0$ is a constant which stands for  the permeability of the porous medium.
Assuming the pressure on the boundary component  $\Gamma_d:=[y=d]$ to be known, in the absence of surface tension effects
the Muskat problem is the 
 system of partial differential equations
\begin{subequations}\label{PB}
\begin{equation}\label{eq:S}
\left\{\begin{array}{rllllll}
\Delta u_+\!\!&=&\!\!0&\text{in}& \Omega(f,h), \\
\Delta u_-\!\!&=&\!\!0&\text{in}& \Omega(f), \\
{\p_th}\!\!&=&\!\!-k\mu_+^{-1}\sqrt{1+h'^2}\p_\nu u_+&\text{on}& \Gamma(h),\\
u_+\!\!&=&\!\!g\rho_+h&\text{on}&\Gamma(h),\\
u_-\!\!&=&\!\!b&\text{on}&\G_d,\\
u_+-u_-\!\!&=&\!\!g(\rho_+-\rho_-)f&\text{on}& \Gamma(f),\\
{\p_tf}\!\!&=&\!\!-k{\mu_\pm^{-1}}\sqrt{1+f'^2}\p_\nu u_\pm &\text{on}& \Gamma(f),
\end{array}
\right.
\end{equation} 
governing the evolution of the fluids supplemented with the   initial conditions
\begin{equation}\label{eq:S1}
f(0)=f_0, \qquad h(0)=h_0.
\end{equation} 
\end{subequations}
We have additionally taken the pressure of the air to be constant  zero,  we   assumed that the interfaces between the fluids move along with the fluids (in particular, that ${\mu_-^{-1}}\p_\nu u_-={\mu_+^{-1}}\p_\nu u_+$ on $\Gamma(f)$), and that the pressure is continuous along the interfaces.
Given $\phi\in C^1(\s),$ we have chosen $\nu:=(-\phi',1)/\sqrt{1+\phi'^2}$ to be the unit outward normal vector at  the curve $[y=\phi]$.

The function  $b=b(t,x)$ corresponds to the given pressure at the interface $[y=d]$ and is assumed to belong to the class
\begin{equation}\label{CL}
 b\in C([0,T), h^{2+\alpha}(\s)),
\end{equation}
for some $T\in (0,\infty]$.
Our main goal is to study the existence and uniqueness of {\em classical H\"older solutions} to the Muskat problem \eqref{PB}, that is, of tuples  $(f,h,u_+,u_-)$
with 
\begin{equation}\label{CS}
 \begin{aligned}
 & (f,h)\in C([0,T_0),\V)\cap C^1\big([0,T_0), (h^{1+\alpha}(\s))^2\big)\,,\\
  &u_+(t)\in h^{2+\alpha}(\0(f(t),h(t))), \quad u_-(t)\in h^{2+\alpha}(\0(f(t)))
 \end{aligned}
\end{equation}
for all $t\in[0,T_0)$ with $T_0\in(0,T]$, and which satisfy the equations of \eqref{PB} pointwise. \bigskip

 Given $(f_0,h_0)\in\V$ and  $b_0:=b(0)\in h^{2+\alpha}(\s),$ Theorem \ref{DP} and  Remark \ref{Obs1} below ensure  that  the diffraction problem
\begin{equation}\label{eq:SSS}
\left\{\begin{array}{rllllll}
\Delta u_+^0\!\!&=&\!\!0&\text{in}& \Omega(f_0,h_0), \\
\Delta u_-^0\!\!&=&\!\!0&\text{in}& \Omega(f_0), \\
u_+^0\!\!&=&\!\!g\rho_+h_0&\text{on}&\Gamma(h_0),\\
u_-^0\!\!&=&\!\!b_{ 0}&\text{on}&\G_d,\\
u_+^0-u_-^0\!\!&=&\!\!g(\rho_+-\rho_-)f_0&\text{on}& \Gamma(f_0),\\
 {\mu_-}\p_\nu u_+^0&=&{\mu_+}\p_\nu u_-^0 &\text{on}& \Gamma(f_0),
\end{array}
\right.
\end{equation} 
possesses a unique solution  $(u_+^0,u_-^0)\in  h^ {2+\alpha}(\0(f_0,h_0)) \times h^ {2+\alpha}(\0(f_0))$. Letting $p_+^0$ and $p_-^0$ be the initial pressures determined, respectively, by $u_+^0$ and $u_-^0$ according to \eqref{VP}, 
we shall show that the condition
  \begin{equation}\label{CT1}
  \begin{aligned}
  & \p_\nu p_-^0-\p_\nu p_+^0 <0\qquad\text{on $\Gamma(f_0)$},\\
   &\p_\nu p_+^0 <0\qquad\text{on $\Gamma(h_0)$}
  \end{aligned} 
  \end{equation}
defines a regime (for $(f_0,h_0,b_0)\in\V\times  h^{2+\alpha}(\s)$)  where the Muskat problem \eqref{PB} is parabolic.
Since the air pressure is constant, the condition \eqref{CT1} expresses the Rayleigh-Taylor condition imposed at each interface as mentioned in the Introduction. 
To be more precise, we shall prove in Section \ref{Sec:3} that  the Muskat problem can be recast as a fully nonlinear abstract  evolution equation for the interfaces $f$ and $h$ only, that is, 
\[
\p_t(f,h)=\Phi(t, (f,h)),
\]
 which is of parabolic type when  \eqref{CT1} holds. 
By parabolicity we mean that the Fr\'echet derivative  $\p_{(f,h)}\Phi(0, (f_0,h_0))$ is the generator of a  strongly continuous and analytic semigroup.
This property is the  corner stone in our analysis and, together with the abstract parabolic theory due to Da Prato and Grisvard \cite{DG79, L95}, it enables us to establish
the following well-posedness result for the Muskat problem without  surface tension effects.

 \begin{thm}\label{MT1}
  Let $g\geq0,$ $(f_0,h_0)\in\V$, and  $b $ be given such that \eqref{CL} holds. Assume that {the Rayleigh-Taylor condition} \eqref{CT1} is satisfied.
  
  Then, there exist a maximal existence time $T_0:=T_0(f_0,h_0)\in(0,T]$ and a unique classical  H\"older  solution $(f,h,u_+,u_-)$ to \eqref{PB} on~$[0,T_0).$ 
Additionally, the solutions depend continuously on the initial data.
 \end{thm}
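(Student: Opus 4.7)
The strategy, foreshadowed in the introduction and carried out over Sections~\ref{Sec:3}--\ref{Sec:6}, is to reduce \eqref{PB} to a fully nonlinear evolution equation for the pair of interfaces alone and then to invoke the continuous maximal regularity theory of Da Prato and Grisvard after establishing parabolicity of the linearization.

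\emph{Step 1 (Reformulation).} For every admissible triple $(f,h,b)\in\V\times h^{2+\alpha}(\s)$, the elliptic diffraction problem obtained by replacing $(f_0,h_0,b_0)$ in \eqref{eq:SSS} by $(f,h,b)$ is, by Theorem~\ref{DP} and Remark~\ref{Obs1}, uniquely solvable in $h^{2+\alpha}(\0(f,h))\times h^{2+\alpha}(\0(f))$. Denote this solution by $(U_+[f,h,b],U_-[f,h,b])$. Substituting the corresponding normal traces into the two kinematic equations in \eqref{eq:S} recasts \eqref{PB} as the abstract Cauchy problem
\begin{equation*}
\p_t(f,h)=\Phi(t,(f,h)),\qquad (f(0),h(0))=(f_0,h_0),
\end{equation*}
with $\Phi\colon [0,T)\times\V\to (h^{1+\alpha}(\s))^2$ smooth in $(f,h)$ and continuous in $t$ through $b$. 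Existence of the tuple in the class~\eqref{CS} is equivalent to solving this abstract equation.

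\emph{Step 2 (Parabolicity).} The central analytic step is to prove that the Fr\'echet derivative
\begin{equation*}
A_0:=\p_{(f,h)}\Phi(0,(f_0,h_0))\in\kL\bigl((h^{2+\alpha}(\s))^2,(h^{1+\alpha}(\s))^2\bigr),
\end{equation*}
viewed as an unbounded operator on $(h^{1+\alpha}(\s))^2$ with dense domain $(h^{2+\alpha}(\s))^2$, generates a strongly continuous analytic semigroup. After extracting its principal part, $A_0$ couples $f$ and $h$ through Dirichlet-Neumann type operators of the diffraction problem and is of order one. I would follow the localization scheme of~\cite{ES95}, adapted so as to keep the $2\pi$-periodic framework, freezing the coefficients at each $x\in\s$ and comparing $A_0$, modulo lower-order/compact remainders, with model operators on $\R$ associated with transmission problems on half-planes. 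The analyticity of these model semigroups is furnished by Proposition~\ref{L:DN1}, and the ``coercivity constants'' of their Fourier symbols are precisely
\begin{equation*}
-\p_\nu p_+^0\big|_{\Gamma(h_0)} \qquad\text{and}\qquad -(\p_\nu p_-^0-\p_\nu p_+^0)\big|_{\Gamma(f_0)},
\end{equation*}
both strictly positive by the Rayleigh-Taylor condition~\eqref{CT1}. A perturbation-and-patching argument then transfers sectorial resolvent estimates from the model operators to $A_0$.

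\emph{Step 3 (Abstract well-posedness).} With $A_0$ an analytic generator and $\Phi$ smooth in $(f,h)$ and continuous in $t$, the couple $\bigl((h^{2+\alpha}(\s))^2,(h^{1+\alpha}(\s))^2\bigr)$ meets the assumptions of the fully nonlinear continuous maximal regularity theorem of Da Prato and Grisvard~\cite{DG79,L95}. Since the set of $(f,h)$ for which the linearization is an analytic generator is open in $\V$ (by perturbation of generators), the abstract theorem yields a unique maximal solution $(f,h)\in C([0,T_0),\V)\cap C^1([0,T_0),(h^{1+\alpha}(\s))^2)$ and continuous dependence on $(f_0,h_0,b)$. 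Setting $u_\pm(t):=U_\pm[f(t),h(t),b(t)]$ and using elliptic regularity in the moving domains then produces the classical H\"older solution in the sense of~\eqref{CS}.

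\emph{Main obstacle.} The crux is Step~2. The linearization is a $2\times2$ matrix of nonlocal first-order operators arising from a transmission problem with jumping viscosities and two interacting free boundaries, so one cannot diagonalize in closed form; moreover, resolvent estimates have to be produced in the small H\"older scale, where standard Fourier-multiplier tools do not apply directly. The periodic localization argument in the style of \cite{ES95}, together with the semigroup generation result for the model Dirichlet-Neumann operators in Proposition~\ref{L:DN1}, is exactly what carries the weight of the proof.
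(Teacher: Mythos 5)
Your Steps 1 and 3 match the paper: the reduction to the quasi\-/fully nonlinear evolution equation \eqref{AEP} via Theorem~\ref{DP} and Remark~\ref{Obs1}, and the invocation of \cite[Theorem 8.4.1, Theorem 8.4.4]{L95}, are exactly what the authors do. The gap is in Step~2, and it is substantial.

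First, you propose to localize à la \cite{ES95} by comparison with ``model operators on $\R$ associated with transmission problems on half-planes.'' The paper explicitly rejects this route: it stays inside the periodic framework, uses a finite $p$-partition of unity on $\s$ with coefficients frozen at sample points $x_j^p$, and the resulting frozen operators $\bA_{\tau,j}$, $\bB_{\tau,j}$ are Fourier multipliers \emph{on the circle}, whose resolvent estimates are obtained from a Marcinkiewicz-type multiplier theorem on periodic little H\"older spaces \cite{JL12} (not from half-space Fourier analysis). The authors stress that this departure from \cite{ES95} is what allows them to get the result for arbitrary (not merely small) data, so this is not an interchangeable technical choice.

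Second, and more seriously, you treat $A_0$ as a $2\times2$ system to be handled in one piece. The paper's structural observation is that the matrix is \emph{triangular modulo lower order}: one needs $(i)$ $-\p_f\Phi_1(0,(f_0,h_0))$ and $-\p_h\Phi_2(0,(f_0,h_0))$ to be generators separately (Theorems~\ref{T:c1} and \ref{T:c2}, each under exactly one of the two inequalities in \eqref{CT1}), and $(ii)$ the single off-diagonal entry $\p_h\Phi_1$ to satisfy the smallness estimate \eqref{Claim3} (Lemma~\ref{L:NDO}); then \cite[Theorem I.1.6.1 and Remark I.1.6.2]{Am95} gives generation of the full block. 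Without noticing this decoupling you are left with a genuinely coupled non-diagonalizable system, which is precisely the difficulty you flag as the ``main obstacle'' without a way around it. Third, Proposition~\ref{L:DN1} is not the source of analyticity for the localized model operators; it enters only at the very end of Theorem~\ref{T:c1} to verify surjectivity of $\lambda-\p_f\Phi^{\pi*}_{1,0}$ inside a continuation argument in the auxiliary parameter $\tau\in[0,1]$, a homotopy (together with a density argument requiring $h^{3+\alpha}$ regularity of the reference functions, cf.\ Lemma~\ref{L:Ad1}) that your proposal does not anticipate. So while your overall architecture is right, the key ideas carrying Step~2 — periodic localization, triangular reduction via \cite{Am95}, explicit Fourier symbols and the Marcinkiewicz theorem, and the $\tau$-continuation to a Dirichlet-Neumann map — are missing or misattributed, and as written the plan would not close.
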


Let us remark that the Rayleigh-Taylor condition \eqref{CT1} plays a crucial role for the proof of Theorem~\ref{MT1}, in particular in the analysis presented in Sections~\ref{Sec:5} and \ref{Sec:6}. We now discuss the Rayleigh-Taylor condition in some special physical settings.

\begin{rems}\label{Rem:a} 
  {(a)\, } Our analysis discloses that the Muskat problem is backward parabolic when the Rayleigh-Taylor condition holds with reversed inequalities.

{(b)\, }	When the fluids have the same viscosities, the last two equations of \eqref{eq:SSS} show that the Rayleigh-Taylor condition on $\Gamma(f_0)$, i.e., the first condition in \eqref{CT1}, is equivalent to $\rho_->\rho_+$ (this is the
  case in \cite{BCG14,  CG07, CG10, CGO14}).
  Hence, in this case the Muskat problem \eqref{PB} is well-posed provided that
  \begin{equation*}
  \begin{aligned}
  &\rho_->\rho_+\qquad \text{and}\qquad \p_\nu p_+^0<0\quad\text{on $\Gamma(h_0)$.}
  \end{aligned}
  \end{equation*}
	
{(c)\, } In the particular case $d=-1, $ $ b(0)\equiv c\in\R,$ and $  (f_0,h_0)\equiv  (0,1),$ the condition~\eqref{CT1} coincides with that found in \cite[eq. (2.2)-(2.3)]{EMM12a}, that is,
\begin{equation*}
g\rho_+ >-\frac{c\mu_+}{\mu_-}\qquad \text{and} \qquad\frac{\mu_+-\mu_-}{\mu_++\mu_-}(c-g\rho_+)+g(\rho_+-\rho_-)<0.
\end{equation*}
This shows in particular that the set of data $(f_0,h_0,b(0))$ for which the Rayleigh-Taylor condition~\eqref{CT1} is satisfied is not empty.\medskip

  {(d)\, } If gravity is neglected, that is, if $g=0$, the Rayleigh-Taylor condition \eqref{CT1} is equivalent to
   \begin{equation}\label{CT1b}
  \begin{aligned}
  & (\mu_--\mu_+)\p_\nu p_+^0 <0\qquad\text{on $\Gamma(f_0)$},\\
   &\p_\nu p_+^0 <0\qquad\text{on $\Gamma(h_0).$}
  \end{aligned}
  \end{equation}
 \end{rems}

As for Remarks~\ref{Rem:a} (d) we point out that if $b_0$  is zero or a negative function, then \eqref{CT1b} cannot be satisfied. Indeed, if $b_0$ is the zero function, then both $p_+^0$ and $p_-^0$ are  identically zero. If $b_0$ is a negative function, then $p_-^0$ is also negative since otherwise there exists $x_0\in\s$ such that $p_-^0(x_0,f(x_0))=\max_{\ov\0_-}p_-^0\geq0,$ and so $\p_\nu p_-^0>0$ at $(x_0,f(x_0))$ by Hopf's lemma.
  But $p_+^0$ is harmonic as well and not constant, therefore $p_+^0(x_0,f(x_0))=\max_{\ov\0_+}p_+^0\geq0,$ so that $\p_\nu p_+^0<0$ at $(x_0,f(x_0))$
  in contradiction to the last equation of \eqref{eq:SSS}.
  Hence, $p_-^0$ is negative implying $p_+^0<0$ on $\G(f_0).$ By Hopf's lemma we find $\p_\nu p_+^0 >0$ on $\Gamma(h_0),$
  and \eqref{CT1b} is again not satisfied.\medskip

  Lastly, if $b_0$ is a positive function,  the previous arguments show that \eqref{CT1b} is equivalent to
   \begin{equation*}
  \begin{aligned}
  & (\mu_--\mu_+)\p_\nu p_+^0 <0\qquad\text{on $\Gamma(f_0)$}.
  \end{aligned}
  \end{equation*}
  As $p_+^0$ attains its positive maximum on $\G(f_0)$, we have that $\p_\nu p_0^+$ is negative at least at one point on this interface implying that
  \begin{equation}\label{CT1b2}
  \begin{aligned}
  \mu_+<\mu_-.
  \end{aligned}
  \end{equation}
  It is easy to see that if $b_0$ is positive and  constant and if also $f_0$ and $h_0$ are constant functions, then \eqref{CT1b2} is equivalent to \eqref{CT1}.
  Thus, if $b_0$ is positive and constant we find, as in \cite{SCH04}, that the Muskat problem is well-posed for small initial data -- that is, initial data close to constants in $\big(h^{2+\alpha}(\s)\big)^2 $ -- when the more viscous
  fluid expands into the less viscous one.\\

The proof of Theorem \ref{MT1} is postponed to the end of Section~\ref{Sec:6} as it requires several preparatory results that will be given in the subsequent sections.


\section{The evolution  equation}\label{Sec:3}
In order to solve problem \eqref{PB} we re-write it as an abstract evolution equation on the unit circle.
To do so we first transform system \eqref{eq:S} into a system of equations on fixed domains by using the unknown functions $(f,h).$
Let $\0_-:=\s\times (-1,0)$, $\0_+:=\s\times(0,1),$ and define for each $(f, h)\in\V$  the mappings
$ \phi_{ f}:\Omega_{-}\to\Omega(f)$ and $
\phi_{ (f,h)}:\Omega_{+}\to\Omega(f,h)$
by setting
\[
\phi_{f}(x,y):=(x,-dy+(1+y)f(x)) 
\qquad\ \text{and} \qquad \phi_{(f,h)}(x,y):=(x, y h(x)+(1-y)f(x)),
\]
respectively.
One easily checks that $\phi_f$ and $\phi_{(f,h)}$ are diffeomorphisms for all $(f, h)\in\V.$  
Each pair  $(f, h)\in\V$ induces linear strongly uniformly elliptic operators
\begin{align*}
&\A(f):\mbox{\it{h}}\,^{2+\alpha}(\Omega_-)\to \mbox{\it{h}}\,^{\alpha}(\Omega_-),\qquad v_-\mapsto\Delta(v_-\circ\phi_{f}^{-1})\circ\phi_{f},\\[1ex]
&\A(f,h):\mbox{\it{h}}\,^{2+\alpha}(\Omega_+)\to \mbox{\it{h}}\,^{\alpha}(\Omega_+),\qquad  v_+\mapsto\Delta(v_+\circ\phi_{(f,h)}^{-1})\circ\phi_{(f,h)},
\end{align*}
which depend, as bounded operators, real-analytically on $f$ and $h$ (see the formulae in the Appendix).
Denote by $\tr_0$ the trace operator with respect to $\Gamma_0:=\s\times\{0\}$.
We associate with problem \eqref{eq:S}  trace operators  on $\Gamma_0$, 
\begin{align*}
&\B(f)v_-:=k\mu_-^{-1}\tr_0 (\langle \nabla(v_-\circ\phi_{f}^{-1})|(-f',1)\rangle\circ\phi_{f}),\quad v_-\in \mbox{\it{h}}\,^{2+\alpha}(\Omega_-),\\[1ex]
&\B(f,h)v_+:=k\mu_+^{-1}\tr_0 (\langle \nabla(v_+\circ\phi_{(f,h)}^{-1})|(-f',1)\rangle\circ\phi_{(f,h)}),\quad v_+\in \mbox{\it{h}}\,^{2+\alpha}(\Omega_+),
\end{align*}
which, seen as bounded operators into $h^{1+\alpha}(\s)$,  depend real-analytically on $f$ and $h$ as well.
Lastly, we define  a boundary operator on $\Gamma_1$, where $\Gamma_{\pm1}:=\s\times\{\pm1\}$.
Given $(f,h)\in\V,$ we set
\[
\B_1(f,h)v_+:=k\mu_+^{-1}\tr_1 (\langle \nabla(v_+\circ\phi_{(f,h)}^{-1})|(-h',1)\rangle\circ \phi_{(f,h)}),\quad v_+\in \mbox{\it{h}}\,^{2+\alpha}(\Omega_+),
\]
where  $\tr_{\pm1}$ is the trace operator with respect to $\Gamma_{\pm1}$.

\begin{rem}\label{Obs1} Given $(f,h)\in\V,$ the mappings
\begin{align*}
 &[u_-\mapsto u_-\circ\phi_f]\, : \,h^{2+\alpha}(\0(f)) \to h^{2+\alpha}(\0_-),\\
 &[ u_+\mapsto u_+\circ\phi_{(f,h)}]\,:\,h^{2+\alpha}(\0(f,h))\to  h^{2+\alpha}(\0_+),
\end{align*}
are isomorphisms.
\end{rem}
\begin{proof}
 See, for instance, the proof of \cite[Lemma 1.2]{EM09}.
\end{proof}

In view of Remark \ref{Obs1}, it follows that 
 $(f,h,u_+,u_-)$ is a solution to \eqref{PB} if and only if $(f,h, v_+,v_-)$ with $v_+:=u_+\circ\phi_{(f,h)}$ and  $v_-:=u_-\circ\phi_f$
is a classical H\"older solution  to  
\begin{subequations}\label{eq:TS}
\begin{equation}\label{eq:TS1}
\left\{
\begin{array}{rllllll}
\A(f,h) v_+\!\!&=&\!\!0&\text{in $ \Omega_+$},\\
\A(f) v_-\!\!&=&\!\!0&\text{in $ \Omega_-$},\\
\B(f,h)v_+-\B(f)v_-\!\!&=&\!\!0 &\text{on $ \Gamma_{0}$},\\
v_+-v_-\!\!&=&\!\!g(\rho_+-\rho_-) f&\text{on $ \Gamma_0$},\\
v_+\!\!&=&\!\!g\rho_+h&\text{on $ \Gamma_1$},\\
 v_-\!\!&=&\!\!b&\text{on $\G_{-1}$},
\end{array}
\right.
\end{equation}
with 
\begin{equation}\label{eq:TS2}
\left\{
\begin{array}{rllllll}
\p_t h\!\!&=&\!\!-\B_1(f,h)v_+&\text{on $ \Gamma_1$},\\
\p_t f\!\!&=&\!\!-\B(f)v_- &\text{on $ \Gamma_{0}$},
\end{array}
\right.
\end{equation}
and 
\begin{equation}\label{eq:TS3}
f(0)=f_0,\qquad h(0)=h_0.
\end{equation}
\end{subequations}
The notion of  classical H\"older solution to \eqref{eq:TS} is defined analogously to that for problem \eqref{PB}.\bigskip

\noindent{\bf A diffraction problem in H\"older spaces.}\,
The system \eqref{eq:TS1} is an elliptic diffraction (or transmission)  problem, problems of this type being highly relevant in  many physical  situations such as the study of  multiphase dynamics.
However,  citable references on this topic are sparse.
The main goal in this part is to establish the following result on the existence, uniqueness, and real-analytic dependence of solutions to \eqref{eq:TS1} on given $(f,h)\in\V$ and $b\in h^{2+\alpha}(\s)$.

\begin{thm}\label{DP}
 Given $(f,h)\in \V $ and $b\in h^{2+\alpha}(\s)$, there exists a unique solution $$(v_+,v_-):=(v_+(f,h,b), v_-(f,h,b))\in \mbox{\it{h}}\,^{2+\alpha}(\Omega_+)\times \mbox{\it{h}}\,^{2+\alpha}(\Omega_-)$$
 to the diffraction problem \eqref{eq:TS1}.
 Moreover, it holds that
 \[\big[(f,h,b)\mapsto (v_+(f,h,b), v_-(f,h,b))\big]\in C^\omega\big(\V\times h^{2+\alpha}(\s), \mbox{\it{h}}\,^{2+\alpha}(\Omega_+)\times \mbox{\it{h}}\,^{2+\alpha}(\Omega_-)\big).\]
\end{thm}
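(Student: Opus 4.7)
The plan is to recast the six equations of \eqref{eq:TS1} as a single bounded linear equation $\T(f,h)(v_+,v_-)=\mathbf{b}(f,h,b)$ for an operator $\T(f,h)\in\mathcal{L}(X,Y)$, where $X:=h^{2+\alpha}(\Omega_+)\times h^{2+\alpha}(\Omega_-)$ and $Y$ is the product of the natural H\"older target spaces for the Laplacians on $\Omega_\pm$, the flux-balance operator $\B(f,h)-\B(f)$ on $\Gamma_0$, the value-jump trace on $\Gamma_0$, and the traces on $\Gamma_{\pm 1}$. The formulas for $\A$ and $\B$ recalled at the start of this section make $(f,h)\mapsto\T(f,h)$ real-analytic into $\mathcal{L}(X,Y)$, and $\mathbf{b}(f,h,b)$ depends analytically on $(f,h,b)$. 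Since inversion is real-analytic on the open set of isomorphisms in $\mathcal{L}(X,Y)$, the theorem is reduced to showing that $\T(f,h)$ is an isomorphism for each fixed $(f,h)\in\V$.

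For injectivity, a pair $(v_+,v_-)\in\ker\T(f,h)$ pulls back via the isomorphisms in Remark~\ref{Obs1} to harmonic functions $u_\pm$ on $\Omega(f,h)$ and $\Omega(f)$, vanishing on the outer boundaries $\Gamma(h)\cup\Gamma_d$, continuous across $\Gamma(f)$, and satisfying $\mu_+^{-1}\partial_\nu u_+=\mu_-^{-1}\partial_\nu u_-$ there. Testing the Laplace equation for $u_\pm$ against $\mu_\pm^{-1} u_\pm$, integrating by parts on each subdomain and summing, the interface contributions cancel by the transmission conditions, yielding
\[
\mu_+^{-1}\|\nabla u_+\|_{L^2(\Omega(f,h))}^2+\mu_-^{-1}\|\nabla u_-\|_{L^2(\Omega(f))}^2=0,
\]
so $u_\pm\equiv 0$ by the zero Dirichlet data.

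For surjectivity I would proceed by weak formulation followed by elliptic regularity. After lifting the inhomogeneous Dirichlet data $g\rho_+h$, $b$ and the jump data $g(\rho_+-\rho_-)f$, the Lax--Milgram theorem applied to the continuous, coercive (by Poincar\'e) bilinear form
\[
a\bigl((u_+,u_-),(\varphi_+,\varphi_-)\bigr):=\mu_+^{-1}\!\!\int_{\Omega(f,h)}\!\!\nabla u_+\cdot\nabla\varphi_++\mu_-^{-1}\!\!\int_{\Omega(f)}\!\!\nabla u_-\cdot\nabla\varphi_-
\]
on the subspace of $H^1\times H^1$ of pairs whose traces match on $\Gamma(f)$ and vanish on $\Gamma(h)\cup\Gamma_d$ delivers a unique weak solution; the flux-balance condition then emerges as a natural boundary condition in the Euler--Lagrange equation. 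I would then upgrade to $X$ via elliptic regularity: interior and boundary Schauder estimates yield $h^{2+\alpha}$-regularity away from $\Gamma(f)$, and at the interface one flattens $\Gamma(f)$ and freezes coefficients, reducing the problem locally to the two-phase model on flat half-strips, whose Schauder theory is classical.

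The main obstacle is the interface Schauder regularity in the \emph{small} H\"older scale $h^{2+\alpha}$ rather than the usual $C^{2+\alpha}$. Once $C^{2+\alpha}$ estimates at the curved interface $\Gamma(f)$ are in hand, the small-H\"older refinement requires verifying that the solution map of the transmission problem preserves the closure of $C^\infty$, which together with the flattening argument is precisely the kind of localized interface analysis developed in greater depth in Sections~\ref{Sec:4}--\ref{Sec:6}.
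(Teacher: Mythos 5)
Your global reduction is correct and matches the paper: write the six equations as a bounded linear operator $\T(f,h)$ on $h^{2+\alpha}(\Omega_+)\times h^{2+\alpha}(\Omega_-)$ with data depending analytically on $(f,h,b)$, show $\T(f,h)$ is an isomorphism for each fixed $(f,h)\in\V$, and conclude by real-analyticity of operator inversion. Your injectivity argument is a clean, valid alternative to the paper's: the paper applies the weak maximum principle plus Hopf's lemma to the transformed problem (Proposition~\ref{P:1}), whereas you pull back through Remark~\ref{Obs1} and run the two-phase energy identity, with the interface contributions cancelling via the flux and continuity conditions. Both are fine; the energy argument is arguably more transparent.

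The surjectivity step, however, has a genuine gap where you write that the two-phase flat-interface Schauder theory ``is classical.'' It is not, and the paper says so explicitly (``citable references on this topic are sparse''); this is exactly why Section~\ref{Sec:3} contains Proposition~\ref{P:1} and Corollary~\ref{C:1}. The technical content there is the verification that the transmission conditions $\B_1w_1+\B_2w_2=\varphi_1$, $w_1-w_2=\varphi_2$ on $\Gamma_0$ satisfy the Agmon--Douglis--Nirenberg Complementing Condition for the coupled elliptic system (the ODE initial value problem \eqref{CBC} and the positivity argument based on $\delta_2^{(k)}>0$ and $\beta_2^{(k)}>0$). Without this you cannot invoke \cite{ADN64} to get $C^{2+\alpha}$ estimates up to $\Gamma_0$, and the Lax--Milgram solution cannot be upgraded. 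Once Corollary~\ref{C:1} is available, the paper's route is actually shorter than yours: the continuity method gives existence and uniqueness directly in $C^{2+\alpha}(\ov\Omega_+)\times C^{2+\alpha}(\ov\Omega_-)$, with no need for a separate $H^1$ existence step, and the small-H\"older refinement is a one-line density argument, not a localized interface analysis. You also misattribute the interface work: Sections~\ref{Sec:4}--\ref{Sec:6} are about the generator property of the Fr\'echet derivative of $\Phi$, not about Schauder estimates for the diffraction problem. In short, the missing and essential ingredient in your proposal is the Complementing Condition for the transmission boundary operator on $\Gamma_0$, which is precisely the content of Proposition~\ref{P:1}.
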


In the H\"older setting  considered herein, problem \eqref{eq:TS1} can be accessed by using the celebrated Agmon-Douglis-Nirenberg  estimates  on solutions to elliptic boundary value systems presented in \cite{ADN64}.
To prove Theorem \ref{DP} we first consider a particular boundary value problem  for a  linear elliptic system  with coupled boundary conditions for which we establish the existence and uniqueness of solutions in the natural framework of H\"older spaces. 
This is the context of the next proposition.

In the following  $\p_1:=\p_x$, $\p_2:=\p_y$ and we identify both boundaries of $\0_+$ with the unit circle $\s.$ 

\begin{prop}\label{P:1}
Let $\kL_k:=a_{ij}^{(k)}\p_{ij}+b_{i}^{(k)}\p_i+c^{(k)}$ be strongly uniformly elliptic operators
with coefficients $a_{ij}^{(k)},$ $ b_{i}^{(k)},$ $  c
^{(k)} \in C^\alpha(\ov\0_+)$ for $i,j,k=1,2 $ and such that $c^{(k)}\leq0.$ 
Additionally, let  
$\B_k:=\beta_{i}^{(k)} \tr_0\p_i+\gamma^{(k)}$ be two boundary operators 
such that $\beta_{i}^{(k)}, \gamma^{(k)} \in C^{1+\alpha}(\s)$, $\beta_{2}^{(k)}>0,$ and $\gamma^{(k)}\leq 0,$ $k=1,2.$
Given $F_1,F_2\in C^{\alpha}(\ov\0_+)$, $\varphi_1\in C^{1+\alpha}(\s),$ and $\varphi_2, \varphi_3,\varphi_4\in C^{2+\alpha}(\s),$
the boundary value problem 
\begin{subequations}\label{TS1}
\begin{equation}\label{TS1a}
\left\{
\begin{array}{rlllllll}
\kL_1w_1\!\!&=&\!\!F_1 &\text{in $ \Omega_+,$}\\
 \kL_2w_2\!\!&=&\!\!F_2&\text{in $\Omega_+$},
\end{array}
\right.
\end{equation}
and 
\begin{equation}\label{TS1b}
\left\{
\begin{array}{rllllll}
\B_1w_1+\B_2w_2\!\!&=&\!\!\varphi_1&\text{on $\Gamma_{0}$},\\
w_1-w_2\!\!&=&\!\!\varphi_2 &\text{on $ \Gamma_{0},$}
\end{array}
\right.\qquad\left\{
\begin{array}{rllllll}
w_1\!\!&=&\!\!\varphi_3 &\text{on  $\Gamma_{1},$}\\
w_2\!\!&=&\!\!\varphi_4&\text{on $ \Gamma_{1},$}
\end{array}
\right.
\end{equation}
\end{subequations}
possesses a unique solution $(w_1,w_2)\in \big(C^{2+\alpha}(\ov\0_+)\big)^2$.
\end{prop}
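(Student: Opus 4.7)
The plan is to apply the Agmon--Douglis--Nirenberg (ADN) theory for coupled elliptic systems. The argument splits into three parts: (i) uniqueness via a global maximum principle, (ii) a priori Schauder estimates through verification of the Lopatinskii--Shapiro complementing condition on each boundary component, and (iii) existence via the method of continuity, connecting the given problem to a decoupled reference problem.

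For uniqueness (assume $F_k=0$ and $\varphi_i=0$), set $M:=\max_k\max_{\ov\0_+}w_k$ and suppose $M>0$. Since $c^{(k)}\leq 0$, the weak maximum principle forces the positive maximum of, say, $w_1$ to be attained at some $x_0\in\p\0_+$; not on $\G_1$ (where $w_1=0$), hence $x_0\in\G_0$. The coupling $w_1=w_2$ on $\G_0$ then forces $w_2(x_0)=M$ as well. Since $\0_+$ satisfies the interior ball condition at $\G_0$, Hopf's lemma applied to each $w_k$ gives $\p_2w_k(x_0)<0$ strictly, while the tangential derivatives $\p_1w_k(x_0)$ vanish at the maximum. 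Substituting into $\B_1w_1+\B_2w_2=0$ at $x_0$, every summand on the left is $\leq 0$ (using $\beta_2^{(k)}>0$ and $\gamma^{(k)}\leq 0$), and the leading contribution $\beta_2^{(1)}\p_2w_1(x_0)$ is strictly negative, a contradiction. The analogous argument for $-w_k$ yields $w_1=w_2=0$.

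For the a priori estimates, \eqref{TS1} is elliptic in the ADN sense with weights $s_1=s_2=0$, $t_1=t_2=2$, which is immediate from the uniform ellipticity of each $\kL_k$ since the principal part is diagonal. The complementing condition on $\G_0$ reduces, after freezing coefficients at $x_0\in\G_0$ and Fourier transforming in the tangential direction with parameter $\xi\in\R\setminus\{0\}$, to checking that the only bounded solution on the half-line $y>0$ is the trivial one. Writing the admissible modes as $\wh w_k(y)=c_ke^{\lambda_k y}$ with $\re\lambda_k<0$ the root with negative real part of the characteristic equation associated to $\kL_k$, the condition $\wh w_1(0)=\wh w_2(0)$ gives $c_1=c_2$, and the first boundary equation reduces to a scalar relation whose complex coefficient has real part $\beta_2^{(1)}\re\lambda_1+\beta_2^{(2)}\re\lambda_2<0$ by $\beta_2^{(k)}>0$. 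Hence $c_1=c_2=0$, so Lopatinskii--Shapiro holds on $\G_0$; the Dirichlet conditions on $\G_1$ are trivially complementing. The ADN theorem then delivers a global Schauder estimate, whose lower-order terms $\|w_k\|_{C^0}$ are absorbed by a standard compactness--contradiction argument based on the uniqueness just proved.

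Existence then follows by the method of continuity. I would deform the coupling to $\B_1w_1+\tau\B_2w_2=\varphi_1$ for $\tau\in[0,1]$: at $\tau=0$ the system uncouples into a mixed oblique--Dirichlet problem for $w_1$ (with $\B_1w_1=\varphi_1$ on $\G_0$ and $w_1=\varphi_3$ on $\G_1$), solvable by classical Schauder theory since $\beta_2^{(1)}>0$, followed by a pure Dirichlet problem for $w_2$ with data $w_1|_{\G_0}-\varphi_2$ on $\G_0$ and $\varphi_4$ on $\G_1$, while at $\tau=1$ we recover \eqref{TS1}. Both the maximum-principle and the Lopatinskii--Shapiro verifications persist uniformly in $\tau$, since the strict inequality $\beta_2^{(1)}>0$ is independent of $\tau$, yielding a uniform a priori estimate. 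The set of $\tau\in[0,1]$ of solvability in $(C^{2+\alpha}(\ov\0_+))^2$ is therefore both open (implicit function theorem) and closed (a priori estimate plus completeness), hence equal to $[0,1]$. The step I expect to be most delicate is the Lopatinskii--Shapiro verification on $\G_0$: the coupling mixes both unknowns in a single boundary equation, and some care is required to extract the correct sign from the hypothesis $\beta_2^{(k)}>0$ in the characteristic-root analysis.
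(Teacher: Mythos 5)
Your overall strategy (uniqueness via Hopf's lemma, Schauder estimates via the Lopatinskii--Shapiro/complementing condition for ADN systems, existence via the method of continuity) matches the paper's, and the uniqueness argument and the mode analysis on $\Gamma_0$ are carried out in essentially the same way (your explicit remark that the tangential derivatives vanish at the boundary maximum, and that the $\gamma^{(k)}$-terms are nonpositive, is in fact needed to close the contradiction, and is a small clarification over the paper's more compressed phrasing). Where you genuinely diverge from the paper is in the choice of homotopy: you keep the interior operators $\kL_k$ fixed and deform only the coupling boundary condition to $\B_1w_1+\tau\B_2w_2$, so that at $\tau=0$ the system becomes triangular --- a scalar mixed oblique/Dirichlet problem for $w_1$ on the disjoint boundary components $\Gamma_0$, $\Gamma_1$, followed by a pure Dirichlet problem for $w_2$. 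The paper instead deforms both the interior operators (to the Laplacian) and the boundary operator (to $\tr_0\p_y(w_1+w_2)$), arriving at a reference problem that can be solved by an explicit superposition of classical Dirichlet and Neumann problems for $\Delta$. Your homotopy is the more minimal deformation, but it shifts some weight onto the solvability of the scalar mixed oblique--Dirichlet problem at $\tau=0$, which you invoke as ``classical Schauder theory''; this is indeed true (the two boundary components are disjoint, so there are no corner issues), but it is a nontrivial scalar result that is itself usually established by a continuity argument, so the paper's choice of reference problem is more self-contained. Both routes are correct, and both rely on the same verification of the complementing condition on $\Gamma_0$ --- uniformly in $\tau$, since in each case the crucial real part $\beta_2^{(1)}\re\lambda_1+\tau\beta_2^{(2)}\re\lambda_2$ (in your version, respectively its analogue with the deformed symbols in the paper) stays strictly negative.
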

\begin{proof}
We discuss uniqueness first. 
To this end,  let $(w_1,w_2)$ be a solution to \eqref{TS1} with   right-hand sides replaced all by zero.
If $\max_{\ov\0_+} w_1>0$ (the case $\min_{\ov\0_+} w_1<0$ is similar), the  weak elliptic maximum principle ensures that   $$\max_{\ov\0_+} w_1=\max_{\ov\0_+} w_2=w_1(x,0)=w_2(x,0)$$  for some $x\in\s.$
Applying Hopf's lemma at $(x,0)$ for both $w_1$ and $w_2$ yields  $\p_yw_1(x,0)<0$ and $\p_yw_2(x,0)<0.$ This is in contradiction to the equation $ \B_1w_1+\B_2w_2=0$ on $\Gamma_{0}$.
Hence,   $(w_1,w_2)$ has to be the zero solution, and therefore  \eqref{TS1} has at most one solution $(w_1,w_2)\in \big(C^{2+\alpha}(\ov\0_+)\big)^2$.

For the existence part, we consider a family of  operators
$\{\T_\tau:=(\T_\tau^1,\ldots,\T_\tau^6)\}_{\tau\in[0,1]}\subset\kL(\X, \Y)$, with $\X:=\big(C^{2+\alpha}(\ov\0_+)\big)^2$, $\Y:=\big(C^{\alpha}(\ov\0_+)\big)^2\times  C^{1+\alpha}(\s)\times \big(C^{2+\alpha}(\s)\big)^3,$
and 
\[\T_\tau(w_1,w_2):=\left(\begin{array}{cc}(1-\tau)\kL_1w_1+\tau  \Delta w_1\\(1-\tau)\kL_2w_2+\tau\Delta w_2\\ (1-\tau)(\B_1w_1+\B_2w_2)+\tau\tr_0\p_y(w_1+w_2)\\ \tr_0( w_1-w_2)\\ \tr_1 w_1\\ \tr_1 w_2\end{array}\right)\]
for $ \tau\in[0,1]$ and $(w_1,w_2)\in \big(C^{2+\alpha}(\ov\0_+)\big)^2$.
We observe  that $[\tau\mapsto \T_\tau]\in C([0,1], \kL(\X,\Y)).$
Moreover, by considering two suitable   Dirichlet problems the  invertibility  of $\T_1$ can be reduced to the solvability of the equation $\T_1(z_1,z_2)=(0,0,\varphi,0,0,0,0)$ for   arbitrary $\varphi\in C^{1+\alpha}(\s).$
Indeed, letting $(z_1,z_2)$ denote the solution to the equation  $\T_1(z_1,z_2)=(0,0,\varphi,0,0,0,0)$ for $\varphi:=\varphi_1-\tr_0\p_y(\wt w_1+\wt w_2),$ and setting
\[\wt w_1:=(\Delta,\tr_0 ,\tr_1)^{-1}(F_1,\varphi_2,\varphi_3)\qquad\text{and}\qquad \wt w_2:=(\Delta,\tr_0 ,\tr_1)^{-1}(F_2,0,\varphi_4),\]
 it is easy to see that $(w_1,w_2):=(z_1+\wt w_1, z_2+\wt w_2)\in \X$ satisfies $\T_1(w_1,w_2)=(F_1,F_2, \varphi_1,\varphi_2,\varphi_3,\varphi_4).$
 The solution to $\T_1(z_1,z_2)=(0,0,\varphi,0,0,0)$ is  $z_1=z_2:= (\Delta,\tr_0\p_y,\tr_1)^{-1}(0,\varphi/2,0)\in C^{2+\alpha}(\ov\0_+).$
Hence, we have shown that $\T_1$ is invertible.
If we find a constant $C>0$,  such that 
\begin{equation}\label{MC}
 \|(w_1,w_2)\|_{\X}\leq C\|\T_\tau (w_1,w_2)\|_{\Y} \qquad \text{for all $\tau\in[0,1] $ and  $(w_1,w_2)\in\X,$}
\end{equation}
then by the method of continuity, cf. e.g. \cite{GT01}, we conclude that $\T_0$ is an isomorphism, which is the   claim of the proposition. 

We are left to establish \eqref{MC}. 
To this end, we show that $\T_\tau$ corresponds to  a strongly uniformly elliptic system that satisfies 
the Complementing Condition in the sense of \cite{ADN64}  on both boundary components $\G_0$ and $\G_1$. 
Because the equations in $\0_+$ are decoupled, it is easy to see that $(\T_\tau^1,\T_\tau^2)$ defines a strongly uniformly  elliptic system for each $\tau\in[0,1]$.
Additionally, the boundary conditions defined by  $(\T_\tau^5,\T_\tau^6)$ are of Dirichlet type, and therefore the 
Complementing Condition on  $\G_1$ is straightforward.
To verify  the Complementing Condition on $\G_0$ we modify the operators $\T_\tau^i, 1\leq i\leq 4 $ as follows: we identify the principle parts $\T_\tau^{\pi,i}, 1\leq i\leq 4 $, freeze their  coefficients at an arbitrary $P\in\G_0$, and
replace $(\p_1,\p_2)$ by $(\xi, -i\p_t)$ with $0\neq \xi\in\R$.
Doing this, we arrive at the   initial value problem 
\begin{equation}\label{CBC}
 \left\{
 \begin{array}{rllll}
  v_1''-iA_{ 1}^{(1)}v_1'-A_{ 2}^{(1)}v_1\!\!&=&\!\!0&  \text{for $t>0$,}\\
   v_2''-iA_{ 1}^{(2)}v_2'-A_{ 2}^{(2)}v_2\!\!&=&\!\!0&  \text{for $t>0$,}\\
  v_1(0)\!\!&=&\!\!v_2(0),\\
  i\big[\big((1-\tau)\beta_{ 2}^{(1)}(P)+\tau\big)v_1'(0)+\big((1-\tau)\beta_{ 2}^{(2)}(P)+\tau\big)v_2'(0)\big]\\
  -(1-\tau)\xi\big( \beta_{ 1}^{(1)}(P)v_1(0)+\beta_{ 1}^{(2)}(P)v_2(0))\!\!&=&\!\!0,\\
 \end{array}
 \right.
\end{equation}
where 
\[A_{ 1}^{(k)}:=-\frac{2(1-\tau)a_{ 12}^{(k)}(P)\xi}{(1-\tau)a_{ 22}^{(k)}(P)+\tau},\quad A_{ 2}^{(k)}:=\frac{((1-\tau)a_{ 11}^{(k)}(P)+\tau)\xi^2}{(1-\tau)a_{ 22}^{(k)}(P)+\tau},\qquad k=1,2.\]
The Complementing Condition is satisfied if and only if the only bounded solution $(v_1,v_2)$ to \eqref{CBC} is the zero solution.
It is readily seen that  
\[v_{k}(t)=\gamma_{ 1}^{(k)}e^{i\delta_{ 1}^{(k)}t}e^{-\delta_{ 2}^{(k)}t}+\gamma_{ 2}^{(k)}e^{i\delta_{ 1}^{(k)}t}e^{\delta_{ 2}^{(k)}t},\qquad\text{$k=1,2,$}\]
with $\delta_{ 1}^{(k)}:=A_{ 1}^{(k)}/2$ and $\delta_{ 2}^{(k)}:=\sqrt{A_{ 2}^{(k)}-(A_{ 1}^{(k)})^2/4}>0$.
The boundedness of $v_1,v_2$ entails that $\gamma_{ 2}^{(1)}=\gamma_{ 2}^{(2)}=0.$  
Moreover, the equation $v_1(0)=v_2(0)$ implies  that $\gamma_{ 1}^{(1)}=\gamma_{ 1}^{(2)}.$
Finally, assuming $\gamma_{ 1}^{(1)}\neq0,$  we find from  the last equation of \eqref{CBC} that necessarily
\[  \delta_{ 2}^{(1)}\big((1-\tau)\beta_{ 2}^{(1)}(P)+\tau\big)+\delta_{ 2}^{(2)}\big((1-\tau)\beta_{ 2}^{(2)}(P)+\tau\big)=0.\]
However, this last equation cannot hold true as $\delta_{ 2}^{(k)}$ and $\beta_{ 2}^{(k)}(P), k=1,2,$ are positive constants.
Hence, the Complementing  Condition is also satisfied on $\G_0.$ 

We may use now Theorem 9.3 and argue similarly as in the subsequent  Remark  2 in \cite{ADN64} to conclude, together with the uniqueness result established at the beginning of the proof, 
 there exists a constant $C>0$ such that
the estimate \eqref{MC} holds. 
This completes the proof.
\end{proof}

Using Proposition \ref{P:1}, we obtain the unique solvability of certain diffraction problems within the natural H\"older spaces.
\begin{cor}\label{C:1}
Let $\kL_\pm:=a_{ij}^\pm\p_{ij}+b_{ i}^\pm\p_i+c^{\pm}$ be strongly uniformly elliptic operators
with coefficients $a_{ ij}^\pm, b_{ i}^\pm, c^{\pm} \in C^\alpha(\ov\0_\pm)$ and such that $c^{\pm}\leq0$ for  $i,j=1,2.$
Moreover, let  
$\B_\pm:=\beta_{ i} ^ \pm \tr_0\p_i+\gamma^\pm$ be  boundary operators 
such that $\beta_{ i}^\pm, \gamma^\pm \in C^{1+\alpha}(\s)$, $\beta_{ 2}^\pm>0,$  and $\gamma^\pm\leq 0$.
Then, given $F_\pm\in C^{\alpha}(\ov\0_\pm)$, $\varphi_1\in C^{1+\alpha}(\s),$ and $\varphi_2, \varphi_3, \varphi_4\in C^{2+\alpha}(\s),$ the diffraction problem
\begin{equation}\label{DPG}
\left\{
\begin{array}{rllllll}
\kL_+ v_+\!\!&=&\!\!F_+&\text{in $ \Omega_+$},\\
\kL_- v_-\!\!&=&\!\!F_-&\text{in $ \Omega_-$},\\
 \B_+v_+-\B_-v_-\!\!&=&\!\!\varphi_1 &\text{on $ \Gamma_{0}$},\\
v_+-v_-\!\!&=&\!\!\varphi_2&\text{on $\Gamma_0$},\\
v_+\!\!&=&\!\!\varphi_3&\text{on $ \Gamma_1,$}\\
 v_-\!\!&=&\!\!\varphi_4&\text{on $\G_{-1}$},
\end{array}
\right.
\end{equation}
possesses a unique solution $(v_+,v_-)\in C^{2+\alpha}(\ov\0_+)\times  C^{2+\alpha}(\ov\0_-).$
In particular, there exists a constant $C>0$ such that  
\begin{equation}\label{Schauder}
 \|v_+\|_{2+\alpha}+\|v_-\|_{2+\alpha}\leq C\left(\|F_+\|_{\alpha}+\|F_-\|_{\alpha}+\|\varphi_1\|_{1+\alpha}+\sum_{i=2}^4\|\varphi_i\|_{2+\alpha}\right). 
\end{equation}
\end{cor}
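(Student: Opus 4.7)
The plan is to reduce the diffraction problem \eqref{DPG} on the disjoint union $\Omega_+\cup\Omega_-$ to a single coupled boundary-value problem on $\Omega_+$ that falls under the scope of Proposition~\ref{P:1}. The reduction is carried out by reflecting $v_-$ across the interface $\Gamma_0$: set $w_1:=v_+$ and $w_2(x,y):=v_-(x,-y)$ for $(x,y)\in\Omega_+$. Since the reflection $(x,y)\mapsto(x,-y)$ is a smooth diffeomorphism $\Omega_-\to\Omega_+$ sending $\Gamma_0$ to $\Gamma_0$ and $\Gamma_{-1}$ to $\Gamma_1$, the map $v_-\mapsto w_2$ is an isomorphism $C^{2+\alpha}(\ov\Omega_-)\to C^{2+\alpha}(\ov\Omega_+)$, and it suffices to solve the transformed problem in $\bigl(C^{2+\alpha}(\ov\Omega_+)\bigr)^2$.

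A routine chain-rule computation shows that $\kL_-$ pushes forward to a uniformly elliptic operator $\wt\kL_-$ on $\Omega_+$ with coefficients in $C^\alpha(\ov\Omega_+)$ and non-positive zero-order coefficient; its principal symbol is obtained from that of $\kL_-$ by the linear change of covectors $(\xi,\eta)\mapsto(\xi,-\eta)$, which preserves ellipticity. Using the identity $\partial_y v_-|_{\Gamma_0}=-\partial_y w_2|_{\Gamma_0}$, the transmission condition $\B_+v_+-\B_-v_-=\varphi_1$ rewrites on $\Gamma_0$ as $\B_1 w_1+\B_2 w_2=\varphi_1$, where
\[
\B_1:=\beta_1^+\tr_0\partial_x+\beta_2^+\tr_0\partial_y+\gamma^+,\qquad \B_2:=-\beta_1^-\tr_0\partial_x+\beta_2^-\tr_0\partial_y-\gamma^-;
\]
the continuity condition becomes $w_1-w_2=\varphi_2$ on $\Gamma_0$, and the Dirichlet conditions on $\Gamma_1$ and $\Gamma_{-1}$ combine into Dirichlet data for $w_1$ and $w_2$ on $\Gamma_1$. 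Crucially, the positivity of the normal coefficient in the coupled boundary operator, $\beta_2^{(k)}=\beta_2^\pm>0$, survives the reflection.

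The reduced problem now matches the template of Proposition~\ref{P:1}, which yields a unique $(w_1,w_2)\in\bigl(C^{2+\alpha}(\ov\Omega_+)\bigr)^2$ and the a priori bound \eqref{MC}. Reversing the reflection produces $(v_+,v_-)\in C^{2+\alpha}(\ov\Omega_+)\times C^{2+\alpha}(\ov\Omega_-)$ solving \eqref{DPG}, and the Schauder estimate \eqref{Schauder} follows from \eqref{MC} after absorbing the reflection's (bounded) contribution into the constant.

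The main subtlety to watch is that the zero-order coefficient of $\B_2$, namely $-\gamma^-\ge 0$, sits just outside the hypothesis $\gamma^{(k)}\le 0$ of Proposition~\ref{P:1}. That sign assumption enters Proposition~\ref{P:1} only in the uniqueness step via the maximum principle and Hopf's lemma; the existence part and the a priori estimate \eqref{MC} rely solely on the Complementing Condition, which is unaffected by the sign of $\gamma^{(k)}$. Accordingly, uniqueness for \eqref{DPG} is best re-derived directly on the two-domain geometry: if the homogeneous problem admits a non-trivial solution, the weak maximum principle and the Dirichlet data on $\Gamma_{\pm1}$ force a positive maximum (or negative minimum) of $v_+$ and $v_-$ to be attained at a common point of $\Gamma_0$, and a two-sided Hopf argument there contradicts $\B_+v_+-\B_-v_-=0$.
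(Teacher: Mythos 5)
Your reduction by the reflection $(x,y)\mapsto(x,-y)$ is exactly the paper's argument: the paper sets $w_1:=v_+$, $w_2:=v_-\circ\phi$ with $\phi(x,y)=(x,-y)$, and defines $\B_1:=\B_+$ and $\B_2:=[w\mapsto-\B_-(w\circ\phi^{-1})]$ before invoking Proposition~\ref{P:1}. Your computation of the push-forward of $\kL_-$ and of $\B_-$ is also correct (the coefficient of $\tr_0\partial_y$ in $\B_2$ is indeed $+\beta_2^->0$, and the zero-order coefficient is $-\gamma^-$). So in substance this is the paper's proof, and the observation you single out as the "main subtlety" --- that the transformed zero-order coefficient $-\gamma^-$ is nonnegative and therefore \emph{not} covered by the hypothesis $\gamma^{(k)}\le 0$ of Proposition~\ref{P:1} when $\gamma^-<0$ --- is genuinely sharp: the paper applies Proposition~\ref{P:1} without comment, and its proof of Corollary~\ref{C:1} does rely on this sign for every $\tau\in[0,1]$ in the homotopy.

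However, the two steps you offer to close that gap are both incomplete. First, it is not true that the a priori estimate \eqref{MC} "relies solely on the Complementing Condition." The ADN Theorem 9.3 gives an estimate with an extra $\|u\|_0$ term; the paper removes it by the standard compactness--contradiction argument, which needs uniqueness for the whole family $\T_\tau$, $\tau\in[0,1]$. This is why the paper explicitly says "\ldots to conclude, \emph{together with the uniqueness result established at the beginning of the proof}, that \eqref{MC} holds." Since the boundary operator in $\T_\tau$ carries the zero-order coefficient $(1-\tau)\gamma^{(k)}$, the sign restriction $\gamma^{(k)}\le 0$ is active on the whole path, not only at $\tau=0$.

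Second, the direct two-sided Hopf argument on the diffraction geometry does not quite produce the asserted contradiction under the stated hypotheses. At a common positive maximum $(x_0,0)$ with $v_+(x_0,0)=v_-(x_0,0)=M>0$, Hopf gives $\partial_y v_+(x_0,0)<0$ and $\partial_y v_-(x_0,0)>0$, and the tangential derivatives vanish, so
\[
(\B_+v_+-\B_-v_-)(x_0)=\beta_2^+\,\partial_y v_+ -\beta_2^-\,\partial_y v_- + (\gamma^+-\gamma^-)\,M \;<\; (\gamma^+-\gamma^-)\,M .
\]
This is strictly negative --- and hence contradicts the homogeneous transmission condition --- only when $\gamma^+\le\gamma^-$. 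That inequality is \emph{not} implied by $\gamma^\pm\le 0$; for example, $\gamma^+=0$, $\gamma^-<0$ satisfies the corollary's hypotheses but has $\gamma^+-\gamma^->0$, and indeed one can build explicit Fourier-mode counterexamples to uniqueness for a suitable value of $\gamma^-<0$. The two arguments coincide and are fully rigorous only in the range $\gamma^+\le\gamma^-$ (in particular when $\gamma^\pm\equiv 0$, which is the only case the paper actually uses in the diffraction problems \eqref{eq:TS1}, \eqref{P2}--\eqref{P5}, etc.). So your instinct to re-examine uniqueness directly is the right one, but as written the fix carries the same gap it is meant to repair; a correct remedy is either to strengthen the hypothesis to $\gamma^+\le\gamma^-$ or to observe that the transmission-line zero-order coefficient is effectively $\gamma^+-\gamma^-$ once $v_+=v_-$ on $\Gamma_0$ is used.
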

\begin{proof}
 The mapping   $\phi:\0_+\to\0_-$ defined by  $\phi(y)=-y$, $y\in\0_+$ is a  smooth diffeomorphism.
 Therefore, if $(v_+,v_-)$ solves the system \eqref{DPG}, then $w_1:=v_+$ and $w_2=v_-\circ \phi,$  
is a solution to \eqref{TS1} with 
$ \kL_1:=\kL_+,$ $\kL_2:=[w\mapsto \big(\kL_-(w\circ\phi^{-1})\big)\circ\phi],$  $ \B_1:=\B_+,$ $\B_2:=[w\mapsto-\B_-(w\circ\phi^{-1})],$
$F_1:=F_+, $ and $F_2:=F_-\circ \phi.$
The desired claim follows now directly from Proposition \ref{P:1}.
\end{proof}

 We are now in the position to prove Theorem \ref{DP}.
 \begin{proof}[Proof of Theorem \ref{DP}] Given $(f,h)\in\V,$
it follows from Corollary \ref{C:1}  that the mapping 
\begin{equation}\label{op}
(v_+,v_-)\mapsto
\begin{pmatrix} 
 \A(f,h) v_+ \\
\A(f) v_- \\
\B(f,h)v_+-\B(f)v_- \\
\tr_0(v_+-v_-) \\
\tr_1v_+,\\
 \tr_{-1}v_- 
\end{pmatrix}
\end{equation}
defines an isomorphism between  $C^{2+\alpha}(\ov\0_+)\times  C^{2+\alpha}(\ov\0_-)$ and $  C^{\alpha}(\ov\0_+)\times C^{\alpha}(\ov\0_-) \times C^{1+\alpha}(\s)\times\big( C^{2+\alpha}(\s)\big)^3.$ 
Since $(f,h)\in\V,$   a density argument shows that the operator defined by \eqref{op} is a isomorphism also when acting between the corresponding  small H\"older spaces
$h^{2+\alpha}(\0_+)\times  h^{2+\alpha}(\0_-)$ and $  h^{\alpha}(\0_+)\times h^{\alpha}(\0_-) \times h^{1+\alpha}(\s)\times\big(h^{2+\alpha}(\s)\big)^3.$
Because the differential operators and the right-hand sides of the equations of \eqref{eq:TS1} depend in a real-analytic way on $(f,h,b)\in \V\times h^{2+\alpha}(\s),$ the claim of Theorem \ref{DP} is now obvious.  
 \end{proof}

\noindent{\bf The evolution equation.}\, With the identification $\Gamma_i=\s$ for   $i\in\{-1,0, 1\}$ and by using Theorem~\ref{DP}, the problem \eqref{eq:TS1}-\eqref{eq:TS2} can  now be reformulated as an abstract fully nonlinear and nonlocal evolution equation
\begin{equation}\label{AEP}
 \p_t(f,h)=\Phi(t,(f,h)),
\end{equation}
where $\Phi:[0,T)\times \V \to (h^{1+\alpha}(\s))^2$ is the operator 
$\Phi=(\Phi_1,\Phi_2) $ defined by
\begin{equation}\label{PHI}
\begin{aligned}
 &\Phi_1(t,(f,h)):=-\B(f)v_-(f,h,b(t)),\\ 
 &\Phi_2(t,(f,h)):=-\B_1(f,h)v_+(f,h,b(t)),
 \end{aligned}
\end{equation}
with $(v_+,v_-)$ denoting the  solution operator introduced in Theorem \ref{DP}.
We note that 
\begin{equation}\label{reg1}
 \text{$\Phi\in C\big([0,T)\times \V, (h^{1+\alpha}(\s))^2\big)$\qquad and \qquad $\p_{(f,h)}\Phi\in C\big( [0,T)\times\V, \kL\big((h^{2+\alpha}(\s))^2, (h^{1+\alpha}(\s))^2\big)\big)$.}
\end{equation}
 Let $(f_0,h_0)\in\V$ and set  $$b_0:=b(0)\in h^{2+\alpha}(\s).$$
Our aim is to apply the existence  result  \cite[Theorem 8.4.1]{L95} to \eqref{AEP} for which we need to show that 
\begin{equation}\label{AG}
 -\p_{(f,h)}\Phi(0,(f_0,h_0))\in\kH\big((h^{2+\alpha}(\s))^2, (h^{1+\alpha}(\s))^2\big),
\end{equation}
 that is, $\p_{(f,h)}\Phi(0,(f_0,h_0))$ seen as an unbounded operator in $(h^{1+\alpha}(\s))^2$ with domain of definition $(h^{2+\alpha}(\s))^2$  is the generator of a strongly continuous analytic semigroup in 
 $\kL\big((h^{1+\alpha}(\s))^2\big)$.
This generator property will be established in the sections to follow for $(f_0,h_0)\in\V$ and  $b_0\in h^{2+\alpha}(\s)$ for which  \eqref{CT1} is satisfied.

Given $(f_*,h_*)\in\V$, the operator $\p_{(f,h)}\Phi(0,(f_*,h_*))\in\kL\big((h^{2+\alpha}(\s))^2,  (h^{1+\alpha}(\s))^2\big)$ can be written in matrix form
\[\p_{(f,h)}\Phi(0,(f_*,h_*))
=\begin{pmatrix}
  \p_{f}\Phi_1(0,(f_*,h_*)) & \p_{h}\Phi_1(0,(f_*,h_*))\\[1ex]
   \p_{f}\Phi_2(0,(f_*,h_*)) & \p_{h}\Phi_2(0,(f_*,h_*))
 \end{pmatrix},
\] 
where, according to the definition \eqref{PHI}, we have
\begin{equation}\label{DC}
\begin{aligned}
 \p_{f}\Phi_1(0,(f_*,h_*))[f]&=-\p_f\B(f_*)[f]v_-(f_*,h_*,b_0)-\B(f_*)\p_fv_-(f_*,h_*,b_0)[f],\\
 \p_{h}\Phi_1(0,(f_*,h_*))[h]&=- \B(f_*)\p_hv_-(f_*,h_*,b_0)[h],\\
 \p_{h}\Phi_2(0,(f_*,h_*))[h]&=-\p_h\B_1(f_*,h_*)[h]v_+(f_*,h_*,b_0)-\B_1(f_*,h_*)\p_hv_+(f_*,h_*,b_0)[h],
\end{aligned}
\end{equation}
for $(f,h)\in (h^{2+\alpha}(\s))^2.$
Additionally,  $(w_+[f],w_-[f]):=(\p_fv_+(f_*,h_*,b_0)[f],\p_fv_-(f_*,h_*,b_0)[f])$ is the solution to the
diffraction problem 
\begin{equation}\label{P2}
\left\{
\begin{array}{rllllll}
\A(f_*,h_*) w_+[f]\!\!&=&\!\!-\p_f\A(f_*,h_*)[f]v_+^* &\text{in $ \Omega_+$},\\
\A(f_*) w_-[f]\!\!&=&\!\!-\p_f\A(f_*)[f] v_-^*&\text{in $ \Omega_-$},\\
 \B(f_*,h_*)w_+[f]-\B(f_*)w_-[f]\!\!&=&\!\!-\p_f\B(f_*,h_*)[f]v_+^*+\p_f\B(f_*)[f]v_-^*&\text{on $\Gamma_{0}$},\\
w_+[f]-w_-[f]\!\!&=&\!\!g(\rho_+-\rho_-) f&\text{on $ \Gamma_0$},\\
w_+[f]\!\!&=&\!\!0&\text{on $ \Gamma_1$},\\
 w_-[f]\!\!&=&\!\!0&\text{on $\G_{-1}$},
\end{array}
\right.
\end{equation}
 and   $(W_+[h],W_-[h]):=(\p_hv_+^*(f_*,h_*,b_0)[h],\p_hv_-^*(f_*,h_*,b_0)[h])$  solves the
diffraction problem 
\begin{equation}\label{P3}
\left\{
\begin{array}{rllllll}
\A(f_*,h_*) W_+[h]\!\!&=&\!\!-\p_h\A(f_*,h_*)[h]v_+^* &\text{in $\Omega_+$},\\
\A(f_*) W_-[h]\!\!&=&\!\!0&\text{in $ \Omega_-$},\\
 \B(f_*,h_*)W_+[h]-\B(f_*)W_-[h]\!\!&=&\!\!-\p_h\B(f_*,h_*)[h]v_+^*&\text{on $ \Gamma_{0}$},\\
W_+[h]-W_-[h]\!\!&=&\!\!0&\text{on $ \Gamma_0$},\\
W_+[h]\!\!&=&\!\!g\rho_+h&\text{on $ \Gamma_1$},\\
 W_-[h]\!\!&=&\!\!0&\text{on $\G_{-1}$}.
\end{array}
\right.
\end{equation}
In \eqref{P2} and \eqref{P3} we set  
\begin{equation}\label{zero}
(v_+^*,v_-^*):=(v_+,v_-)(f_*,h_*,b_0)\,.
\end{equation}
According to  \cite[Theorem I.1.6.1 and Remark I.1.6.2]{Am95},   \eqref{AG} is satisfied provided  the diagonal operators satisfy
\begin{align}
& -\p_{f}\Phi_1(0,(f_0,h_0))\in\kH\big(h^{2+\alpha}(\s), h^{1+\alpha}(\s)\big),\label{Claim1}\\
& -\p_{h}\Phi_2(0,(f_0,h_0))\in\kH\big(h^{2+\alpha}(\s), h^{1+\alpha}(\s)\big),\label{Claim2}
\end{align}
and  provided  the following property holds for the off-diagonal operator: for each $\e>0$ there exists $K_0=K_0(\e)>0$ such that  
\begin{align}
&\|\p_{h}\Phi_1(0,(f_0,h_0))[h]\|_{1+\alpha}\leq \e\|h\|_{2+\alpha}+K_0\|h\|_{1+\alpha}\qquad\text{for all $h\in h^{2+\alpha}(\s)$}.\label{Claim3}
\end{align}
So, to establish \eqref{AG} various computations are needed.
In Section \ref{Sec:4} we first prove \eqref{Claim3} based on Schauder estimates for diffraction problems as presented in Corollary \ref{C:1}.
The proofs of \eqref{Claim1} and \eqref{Claim2} respectively, are given in Sections \ref{Sec:5} (see Theorem~\ref{T:c1}) and \ref{Sec:6} (see Theorem~\ref{T:c2})
for which we use localization techniques in the spirit of \cite{ES95} (see also \cite{E94, ES97, We14}). 
However, our localization techniques are quite different from those therein as we do not
consider problems in the halfplane, and our results are sharper, e.g. see Theorem \ref{T:c2} and \cite[Theorem 14]{We14}.

\section{An off-diagonal operator}\label{Sec:4}
The main goal of this section is to establish the property \eqref{Claim3}.
 This is a consequence of the following lemma where \eqref{Claim3} is established for general $(f_*,h_*)\in\V$. 

\begin{lemma}\label{L:NDO}
Let   $(f_*,h_*)\in\V$. Given $\e\in(0,1)$, there exists $K_0=K_0(\e)>0$ such that  
\begin{align}
&\|\p_{h}\Phi_1(0,(f_*,h_*))[h]\|_{1+\alpha}\leq \e\|h\|_{2+\alpha}+K_0\|h\|_{1+\alpha}\qquad\text{for all $h\in h^{2+\alpha}(\s)$}.\label{Claim3'}
\end{align} 
\end{lemma}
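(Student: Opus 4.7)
\emph{Plan.} The plan is to combine the Schauder estimates from Corollary~\ref{C:1} with a compactness/smoothing argument exploiting the specific structure of the diffraction problem~\eqref{P3}.

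From \eqref{DC}, $\partial_h \Phi_1(0,(f_*,h_*))[h] = -\B(f_*) W_-[h]$, where $(W_+[h], W_-[h])$ is the unique solution of the linear diffraction problem~\eqref{P3}. Setting $L := \partial_h \Phi_1(0,(f_*,h_*))$, the first step is to apply Corollary~\ref{C:1} to \eqref{P3} using the explicit formulas from the Appendix for the data: $\partial_h\A(f_*,h_*)[h]v_+^*$ is a second-order linear expression in $h$ whose $h^{\alpha}$ norm is controlled by $\|h\|_{2+\alpha}$, while $\partial_h\B(f_*,h_*)[h]v_+^*$ is a first-order linear expression in $h$ whose $h^{1+\alpha}$ norm is controlled by $\|h\|_{2+\alpha}$. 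Combined with the $\Gamma_1$-Dirichlet data $g\rho_+ h$, the Schauder estimate yields
\[
\|W_+[h]\|_{h^{2+\alpha}(\Omega_+)}+\|W_-[h]\|_{h^{2+\alpha}(\Omega_-)} \leq C\|h\|_{2+\alpha},
\]
so that $L: h^{2+\alpha}(\s) \to h^{1+\alpha}(\s)$ is bounded with $\|Lh\|_{1+\alpha} \leq C\|h\|_{2+\alpha}$.

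The core step is the refinement to the Ehrling-type bound \eqref{Claim3'}. I would exploit the factorization $L = -T_- \circ \Sigma$, where $\Sigma: h \mapsto \psi := W_-[h]|_{\Gamma_0} = W_+[h]|_{\Gamma_0}$ and $T_-$ is the Dirichlet-to-Neumann operator for $\A(f_*)$ on $\Omega_-$ (bounded of order one); this factorization is available because $W_-$ satisfies a homogeneous equation in $\Omega_-$ with vanishing data on $\Gamma_{-1}$, hence is fully determined by $\psi$. The key point is that $\Sigma$ has a smoothing character: in the constant-coefficient flat model case, the map from the Dirichlet data $g\rho_+ h$ on $\Gamma_1$ to the trace on the opposite interface $\Gamma_0$ is represented as a Fourier multiplier with symbol $\sim n/\sinh n$, decaying exponentially in frequency. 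The remaining data contributions, from $\partial_h\A[h]v_+^*$ (supported in $\Omega_+$) and $\partial_h\B[h]v_+^*$ (on $\Gamma_0$), enter through strictly lower-order mechanisms and can be absorbed by standard compactness bookkeeping. Together, this yields that $L: h^{2+\alpha}(\s) \to h^{1+\alpha}(\s)$ is compact; the estimate \eqref{Claim3'} then follows by a standard contradiction argument: if it failed, one would extract a normalized sequence $(h_n)$ with $\|h_n\|_{2+\alpha}=1$, $\|h_n\|_{1+\alpha} \to 0$, and $\|Lh_n\|_{1+\alpha} \geq \varepsilon_0$, then invoke compactness of $L$ together with the continuity $L: h^{2+\alpha}(\s) \to \D'(\s)$ to produce a subsequence with $L h_{n_k} \to 0$, yielding the contradiction.

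The main obstacle is making the smoothing/compactness of $\Sigma$ rigorous in the variable-coefficient, coupled setting with $(f_*, h_*) \in \V$: while transparent in the flat case via explicit Fourier analysis, the general case requires splitting the data of \eqref{P3} according to its origin (interior of $\Omega_+$, interface $\Gamma_0$, or interface $\Gamma_1$), together with careful Schauder tracking at each Hölder scale to isolate the smoothing contribution from the lower-order ones.
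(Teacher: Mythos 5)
Your proposal takes a genuinely different route from the paper, and there is a substantive gap at its center.

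The paper proves \eqref{Claim3'} \emph{directly}: it first approximates $(f_*,h_*,b_0)$ by smooth data $(f_n,h_n,b_n)$ and absorbs the approximation error into $\tfrac{\e}{2}\|h\|_{2+\alpha}$; then, for fixed $n$, it splits the solution of \eqref{P3} into a piece $(W_+^2,W_-^2)$ carrying the $\Gamma_0$- and $\Gamma_1$-data and a piece $(W_+^1,W_-^1)$ carrying the interior forcing $-\p_h\A(f_n,h_n)[h]v_+^n$. The first piece is controlled by interior-type Schauder estimates on $(1/2)\Omega_-$ (giving a genuine gain $\|W_-^2\|_{2+\alpha}^{(1/2)\Omega_-}\leq C\|h\|_{2+\alpha'}$, $\alpha'<\alpha$); the second by a cutoff $\chi$ supported near $\Gamma_0$, which makes the coefficient $\|y\chi\|_0$ of $\|h\|_{2+\alpha}$ in \eqref{Chopin2} small. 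Crucially, this does \emph{not} establish compactness of the operator $L:=\p_h\Phi_1(0,(f_*,h_*))$; it establishes an $\e$-bound with a genuinely nonzero (small) coefficient on $\|h\|_{2+\alpha}$.

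Your route instead asserts that $L:h^{2+\alpha}(\s)\to h^{1+\alpha}(\s)$ is compact and then concludes by an Ehrling/contradiction argument. The contradiction argument itself is fine given compactness (and given that $L$ extends continuously to $h^{1+\alpha}(\s)$ with distribution-valued target, which you should also note requires an argument since $h''$ appears in the data of \eqref{P3}). The gap is the compactness claim itself. You describe the interior-forcing contribution $\p_h\A(f_*,h_*)[h]v_+^*$ as entering "through strictly lower-order mechanisms", but this is not so: the Appendix shows this forcing contains the genuine second-order term $\tfrac{y h''}{h_*-f_*}\,\p_y v_+^*$. This term vanishes on $\Gamma_0$ (via the factor $y$), which is what makes the cutoff argument in the paper work, but it is still second order in $h$ and standard Schauder estimates give only $\|y\chi\|_0\|h\|_{2+\alpha}+C(\delta)\|h\|_{2+\alpha'}$, not $\|h\|_{2+\alpha'}$. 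In a flat constant-coefficient model one can compute explicitly that the map from $h$ (appearing through the forcing $yh''$) to the trace $W_+|_{\Gamma_0}$ has symbol $O(1/|m|)$ — a structural cancellation coming from the fact that the forcing vanishes at the trace boundary — so compactness is plausible, but propagating this gain to the coupled variable-coefficient diffraction problem is precisely the hard analytic content of the lemma. It does not follow from "standard compactness bookkeeping", and you yourself flag it as the "main obstacle"; absent a proof of it, the argument does not close. The paper's approach is safer precisely because it sidesteps this: it never claims a regularity gain for the $yh''$ contribution near $\Gamma_0$, only smallness via the cutoff, and combines this with interpolation \eqref{interpolation} and Young's inequality to reach \eqref{Claim3'}.
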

\begin{proof}
The proof is based on  estimates for boundary value problems for elliptic systems, cf. \eqref{Schauder} and~\cite{ADN64}.
Let $(f_n,h_n,b_n)$ be a sequence  in  $\big(C^\infty(\s)\big)^3$ which converges towards $(f_*,h_*,b_0)$ in  $\big(h^{2+\alpha}(\s)\big)^3$ and  such that $(f_n,h_n)\in\V$ for all $n.$ 
We then have
\begin{align}
&\|\p_{h}\Phi_1(0,(f_*,h_*))[h]\|_{1+\alpha}\leq C\|\tr_0(\nabla\p_hv_-(f_*,h_*,b_0)[h])\|_{1+\alpha}\nonumber\\[1ex]
&\leq C \|\tr_0\nabla\big( \p_hv_-(f_n,h_n,b_n)[h]-\p_hv_-(f_*,h_*,b_0)[h]\big)\|_{1+\alpha}+C\|\tr_0(\nabla\p_hv_-(f_n,h_n,b_n)[h])\|_{1+\alpha}\nonumber\\[1ex]
&\leq C\|\p_hv_-(f_n,h_n,b_n)-\p_hv_-(f_*,h_*,b_0)\|_{\kL\big(h^{2+\alpha}(\s), h^{2+\alpha}(\0_-)\big)}\|h\|_{2+\alpha}\nonumber\\[1ex]
&\hspace{0.5cm}+C\|\tr_0\nabla(\p_hv_-(f_n,h_n,b_n)[h])\|_{1+\alpha}=:I_1+I_2\label{I0}
\end{align}
for all $h\in h^{2+\alpha}(\s).$
Let $\e>0$ be given.
In view of Theorem \ref{DP}, we can choose  $n$ large enough to guarantee that
\begin{equation}\label{I1}
 I_1\leq \frac{\e}{2}\|h\|_{2+\alpha}.
\end{equation}
We are now left to estimate the term $I_2$ for a fixed $n$ such that \eqref{I1} holds.
To this end, we  let $(W_+^n[h], W_-^n[h]):=(\p_hv_+(f_n,h_n,b_n)[h], \p_hv_-(f_n,h_n,b_n)[h])$ denote  the solution  to \eqref{P3} when replacing $(f_*,h_*,b_0)$ by  $(f_n,h_n,b_n)$  and $v_+^*$ by $v_+^n:=v_+(f_n,h_n,b_n)$.
Because   $(f_n,h_n,b_n)$ is smooth, it follows from \cite[Theorem 9.3]{ADN64}, by arguing as in the proof of Theorem \ref{DP}, that $v_+^n$ is a smooth function up to the boundary of $\0_+.$ 
In particular, $v_+^n\in h^{3+\alpha}(\0_+).$ 
We now split the solution $(W_+^n[h], W_-^n[h])$ as 
\begin{equation}\label{I2}
 (W_+^n[h], W_-^n[h])=(W_+^{1}, W_-^{1})+(W_+^{2}, W_-^{2}),
\end{equation}
where $(W_+^{1}, W_-^{1})$ is the solution to 
\begin{equation}\label{P4}
\left\{
\begin{array}{rllllll}
\A(f_n,h_n) W_+^1\!\!&=&\!\!-\p_h\A(f_n,h_n)[h]v_+^n &\text{in  $\0_+$},\\
\A(f_n) W_-^1\!\!&=&\!\!0&\text{in $\0_-$} ,\\
 \B(f_n,h_n)W_+^1-\B(f_n)W_-^1\!\!&=&\!\!0&\text{on $ \Gamma_{0}$},\\
W_+^1-W_-^1\!\!&=&\!\!0&\text{on $ \Gamma_0$},\\
W_+^1\!\!&=&\!\!0&\text{on $ \Gamma_1$,}\\
 W_-^1\!\!&=&\!\!0&\text{on $\G_{-1},$}
\end{array}
\right.
\end{equation} 
respectively $(W_+^{2}, W_-^{2})$ solves
\begin{equation}\label{P5}
\left\{
\begin{array}{rllllll}
\A(f_n,h_n) W_+^2\!\!&=&\!\!0 &\text{in  $\0_+$},\\
\A(f_n) W_-^2\!\!&=&\!\!0&\text{in $\0_-$} ,\\
 \B(f_n,h_n)W_+^2-\B(f_n)W_-^2\!\!&=&\!\!-\p_h\B(f_n,h_n)[h]v_+^n&\text{on $ \Gamma_{0}$},\\
W_+^2-W_-^2\!\!&=&\!\!0&\text{on $ \Gamma_0$},\\
W_+^2\!\!&=&\!\!g\rho_+h&\text{on $ \Gamma_1$,}\\
 W_-^2\!\!&=&\!\!0&\text{on $\G_{-1}.$}
\end{array}
\right.
\end{equation} 
In order to estimate $W_-^2$ we note that 
\begin{align*}
\p_h \B(f_n,h_n)[h]=-k\mu_+^{-1}\frac{(1+{f'_n}^2)h}{(h_n- f_n)^2}\tr_0 \p_y,
\end{align*}
(see the formulas in the appendix), and therefore, the right-hand side of the third equation of \eqref{P5} belongs to $h^{2+\alpha}(\s).$ 
In virtue of \cite[Theorem 9.3]{ADN64}, there exists a constant $C>0$ (which depends on the previously fixed $n$) such that on the subdomain $(1/2)\0_-:=\s\times(-1/2,0)$ of $\0_-$ we have
\begin{equation}\label{improved}
 \|W_-^2\|_{2+\alpha}^{(1/2)\0_-}\leq C\big(\|\p_h\B(f_n,h_n)[h]v_+^n\|_{1+\alpha}+\|W_+^2\|_0+\|W_-^2\|_0\big)
\end{equation}
for all $h\in h^{2+\alpha}(\s)$.
Let  $\alpha'\in(0,\alpha)$ be fixed. 
Recalling \eqref{Schauder}, we have  
\begin{equation*} 
 \|W_+^2\|_0+\|W_-^2\|_0\leq \|W_+^2\|_{2+\alpha'}+\|W_-^2\|_{2+\alpha'} \leq C\big(\|\p_h\B(f_n,h_n)[h]v_+^n\|_{1+\alpha'}+\|h\|_{2+\alpha'}\big),
\end{equation*}
and together with \eqref{improved} we end up with 
\begin{equation}\label{improved1}
 \|W_-^2\|_{2+\alpha}^{(1/2)\0_-}\leq C\big(\|\p_h\B(f_n,h_n)[h]v_+^n\|_{1+\alpha}+\|h\|_{2+\alpha'}\big)\leq C \|h\|_{2+\alpha'}.
\end{equation}
Using the following interpolation property of the small H\"older spaces (e.g. see \cite{L95})
 \begin{equation}\label{interpolation}
(h^{r}(\s),h^{s}(\s))_\theta=h^{(1-\theta)r+\theta s}(\s)\qquad\text{for $ \theta\in(0,1)$ and $(1-\theta)r+\theta s\notin\N,$}
\end{equation}
where $(\cdot,\cdot)_\theta=(\cdot,\cdot)_{\theta,\infty}^0$ is the continuous interpolation functor introduced by Da Prato and Grisvard \cite{DG79}, we infer from \eqref{improved1} and Young's inequality there exists a constant $K(\e)$ such that 
\begin{align}
 \|\tr_0 \nabla W_-^2\|_{1+\alpha} &\leq\| W_-^2\|_{2+\alpha}^{(1/2)\0_-}\leq   \frac{\e}{4}\|h\|_{2+\alpha}+K(\e)\|h\|_{1+\alpha}\label{GN1}
\end{align}
for all $h\in h^{2+\alpha}(\s)$.

Thanks to \eqref{GN1}, we are left to prove a similar estimate for $\|\tr_0 \nabla W_-^1\|_{1+\alpha}$.
To this end, we choose $\delta\in(0,1)$ and a function $\chi:=\chi_\delta \in C^\infty([0,1])$ such that $0\leq \chi \leq 1,$ $\chi=1$ on $(0,\delta/8),$ and  $\chi=0$ on $[\delta/4,1].$
Note that \eqref{Schauder} and the regularity  $\A\in C^\omega\big(\V, \kL\big(h^{2+\alpha}(\s), h^{1+\alpha}(\s)\big)\big)$ implies that, for a fixed $\alpha'\in(0,\alpha),$
\begin{equation}\label{Chopin}
 \|W_-^1\|_{2+\alpha'}+\|W_+^1\|_{2+\alpha'}\leq C\|h\|_{2+\alpha'}
\end{equation}
for all $h\in h^{2+\alpha}(\s).$
Using the more compact notation $\A(f_n,h_n)=a_{ij}\p_{ij}+b_2\p_2$, the pair $(W_+^1\chi,W_-^1)$ solves the  system \eqref{P4},
but with the first equation replaced by  the elliptic equation 
\begin{align*}
 \A(f_n,h_n) (W_+^1\chi)=&-\chi\p_h\A(f_n,h_n)[h]v_+^n+2a_{12}\p_1W_+^1\chi'+a_{22}(\p_2W_+^1\chi'+W_+^1\chi'')+b_2W_+^1\chi'
\end{align*}
in $\0_+$.
We now use \eqref{Schauder} and \eqref{Chopin} to get
\begin{align}
 \|W_-^1\|_{2+\alpha}\leq& C\Big(\|\chi\p_h\A(f_n,h_n)[h]v_+^n\|_\alpha+\|(2a_{12}\p_1W_+^1+b_2W_+^1+a_{22}\p_2W_+^1)\chi'+a_{22}W_+^1\chi''\|_\alpha\Big)\nonumber\\
 \leq&C \|\chi\p_h\A(f_n,h_n)[h]v_+^n\|_\alpha+C(\delta)\| W_+^1 \|_{1+\alpha} \nonumber\\
 \leq&  C\|yh''\chi\|_{\alpha}+  C(\delta)\|h\|_{2+\alpha'}\nonumber\\
 \leq& C\|y\chi\|_{0}\|h\|_{2+\alpha}+  C(\delta)\|h\|_{2+\alpha'}. \label{Chopin2}
\end{align}
Choosing $\delta$ such that  $C\|y\chi\|_{0}<\e/4,$ we infer from \eqref{Chopin2}  there exists  a constant $K(\e)$ such that 
\begin{align}\label{GN2}
 \|\tr_0 \nabla W_-^1\|_{1+\alpha} \leq&\| W_-^1\|_{2+\alpha}^{\0_-}\leq (\e/4)\|h\|_{2+\alpha}+ K(\e)\|h\|_{1+\alpha}
\end{align}
for all $h\in h^{2+\alpha}(\s).$
Gathering now \eqref{I0}, \eqref{I1}, \eqref{I2}, \eqref{GN1}, and \eqref{GN2} we have established the desired estimate \eqref{Claim3'}.
\end{proof}


\section{The first diagonal operator}\label{Sec:5}

 In this section we prove that  $\p_{f}\Phi_1(0,(f_0,h_0)) $ is the generator of a strongly continuous and analytic semigroup, hence \eqref{Claim1},  when $(f_0,h_0)$ and $b_0$  are such that the first inequality in \eqref{CT1} holds.
 This is stated in Theorem~\ref{T:c1}.  
We start in Lemma \ref{L:LOP1}  by identifying the ``leading order part'' $\p_{f}\Phi_1^\pi(f_*,h_*)$ of $\p_{f}\Phi_1(0,(f_*,h_*))$  for a general $(f_*,h_*)\in\V$. 
This reduces our  task (see \eqref{Claim4} and \cite[Theorem I.1.3.1 (ii)]{Am95})  to showing  the generator property merely for $\p_{f}\Phi_1^\pi(f_0,h_0)$, and 
thus allows us  to neglect  several ``lower order terms'' in the quite involved computations to follow.  
Following this step,   we locally approximate  the principal part $\p_{f}\Phi_1^\pi(0,(f_*,h_*))$ by Fourier multipliers for $(f_*,h_*)$ sufficiently close to $(f_0,h_0)$ and possessing  additional regularity, cf. Theorem \ref{T1}.
These multipliers are then shown to be  generators of strongly continuous analytic semigroups, where the  constants in the resolvent estimates  are uniform with respect to certain variables, cf.  Lemma \ref{L:2}.
This uniformity  property is essential when establishing the  desired Theorem \ref{T:c1} by means of Lemma \ref{L:Ad1} and a continuity argument. 
\begin{lemma}\label{L:LOP1}
Given  $(f_*,h_*)\in\V$, let $\p_{f}\Phi_1^\pi(f_*,h_*)\in\kL\big(h^{2+\alpha}(\s), h^{1+\alpha}(\s)\big)$ denote the operator defined by
 \begin{equation}\label{LOP1}
 \p_{f}\Phi_1^\pi(f_*,h_*)[f]:=-\frac{k}{\mu_-}\Big(  \frac{2f_*'\tr_0 \p_yv_-^*}{ f_*-d} -\tr_0 \p_x v_-^* \Big)f'   -\B(f_*) w_-^{\pi}[f],\quad f\in h^{2+\alpha}(\s),
 \end{equation}
 with  $(v_+^*,v_-^*)$  defined  in \eqref{zero} and  where $(w_+^{\pi}[f],w_-^{\pi}[f])$  denotes the solution to the problem
 \begin{equation}\label{LOP1a}
\left\{
\begin{array}{rllllll}
\A^\pi_0(f_*,h_*) w_+^{\pi}[f]\!\!&=&\!\!\cfrac{\tr_0\p_yv_+^*}{h_*-f_*}f'' &\text{in $ \Omega_+$},\\
\A^\pi_0(f_*) w_-^{\pi}[f]\!\!&=&\!\!\cfrac{\tr_0\p_y v_-^* }{f_*-d}f''&\text{in $ \Omega_-$},\\
 \B(f_*,h_*)w_+^{\pi}[f]-\B(f_*)w_-^{\pi}[f]\!\!&=&\!\!-\p_f\B^\pi(f_*,h_*)[f]v_+^*+\p_f\B^\pi(f_*)[f]v_-^*&\text{on $ \Gamma_{0}$},\\
w_+^{\pi}[f]-w_-^{\pi}[f]\!\!&=&\!\!g(\rho_+-\rho_-) f&\text{on $\Gamma_0$},\\
w_+^{\pi}[f]\!\!&=&\!\!0&\text{on $ \Gamma_1$},\\
 w_-^{\pi}[f]\!\!&=&\!\!0&\text{on $\G_{-1}$},
\end{array}
\right.
\end{equation}
with
\begin{equation}\label{eqs}
\begin{aligned}
&\A^\pi_0(f_*):=\p_{xx}-\frac{2f_*'}{f_*-d}\p_{xy}+
\frac{f_*'^2+1}{(f_*-d)^2}\p_{yy},\quad \A^\pi_0(f_*,h_*):=\p_{xx}-\frac{2f_*'}{ h_*-f_*}\p_{xy}+\frac{f_*'^2+1}{(h_*-f_*)^2}\p_{yy},\\
&\p_f\B^\pi(f_*)[f]:=\frac{k}{\mu_-} \Big(\frac{2f_*'}{ f_*-d}  \tr_0 \p_y- \tr_0 \p_x\Big)f',\quad \p_f\B^\pi(f_*,h_*)[f]:=\frac{k}{\mu_+} \Big( \frac{2f_*'}{h_*- f_*}  \tr_0\p_y-\tr_0 \p_x\Big)f'.
\end{aligned}
\end{equation}
Then, given  $\e\in(0,1)$, there exists $K_1=K_1(\e)>0$ such that  
\begin{align}
&\|\p_{f}\Phi_1(0,(f_*,h_*))[f]-\p_{f}\Phi_1^\pi(f_*,h_*)[f]\|_{1+\alpha}\leq \e\|f\|_{2+\alpha}+K_1\|f\|_{1+\alpha}\qquad\text{for all $f\in h^{2+\alpha}(\s)$}.\label{Claim4}
\end{align}
 Moreover, $ \p_{f}\Phi_1^\pi \in C^\omega\big(\V, \kL\big(h^{2+\alpha}(\s), h^{1+\alpha}(\s)\big)\big)$.
\end{lemma}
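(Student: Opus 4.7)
The plan is to verify \eqref{Claim4} by extracting from $\p_f\Phi_1(0,(f_*,h_*))[f]$ the ``top-in-$f$'' contributions and bundling the remainders via Schauder estimates for diffraction problems (Corollary~\ref{C:1}) combined with a cut-off argument modelled on the proof of Lemma~\ref{L:NDO}. The analyticity claim will then follow from Theorem~\ref{DP} applied to the principal-part system~\eqref{LOP1a}.

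First I would compute $\p_f\B(f_*)[f]v_-^*$ and $\p_f\A(f_*)[f]v_-^*$ (and their $+$-counterparts) explicitly from the Appendix formulas. A direct calculation gives
\[
-\p_f\B(f_*)[f]v_-^*=-\frac{k}{\mu_-}\Big(\frac{2f_*'\,\tr_0\p_yv_-^*}{f_*-d}-\tr_0\p_xv_-^*\Big)f'+\kN_1[f],
\]
where $\kN_1[f]$ depends linearly on $f$ alone (no derivatives of $f$), whence $\|\kN_1[f]\|_{1+\alpha}\leq C\|f\|_{1+\alpha}$. An analogous calculation shows that $\p_f\A(f_*)[f]v_-^*$ contains $f''$ with coefficient $-(1+y)\p_yv_-^*/(f_*-d)$; its trace on $\Gamma_0$ is exactly $-\tr_0\p_yv_-^*/(f_*-d)$, matching the RHS of the $\Omega_-$-equation in \eqref{LOP1a} up to sign. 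The analogous identifications hold in $\Omega_+$ and for the principal parts of $\p_f\B^\pi(f_*,h_*)$, $\p_f\B^\pi(f_*)$ acting on $v_\pm^*$ along $\Gamma_0$.

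Set $\tilde w_\pm:=w_\pm[f]-w_\pm^\pi[f]$ and write $\A(f_*)=\A_0^\pi(f_*)+\kL(f_*)$ with $\kL(f_*)$ a first-order differential operator (and similarly on the $+$ side). Then $(\tilde w_+,\tilde w_-)$ satisfies a diffraction problem of the form \eqref{DPG} with the full elliptic operators $\A(f_*,h_*),\A(f_*)$, whose interior right-hand sides decompose as: (i) $f''$ multiplied by a $C^\alpha$-coefficient vanishing on $\Gamma_0$ (the discrepancy between $(1+y)\p_yv_-^*/(f_*-d)$ and its trace, together with its $+$-analogue); (ii) terms depending only on $f$ and $f'$; and (iii) correction terms $\kL(f_*)w_-^\pi[f]$ (and the $+$-analogue). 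The boundary data on $\Gamma_0,\Gamma_{\pm1}$ reduce, after cancellation of the $f'$-principal parts, to remainders depending only on $f$, with $h^{1+\alpha}$-norm $\leq C\|f\|_{1+\alpha}$. Type~(i) is handled exactly as in Lemma~\ref{L:NDO}: a cut-off $\chi(y)$ supported near $y=0$ localizes the $f''$-term into a strip where the vanishing coefficient gives a bound $C\|y\chi\|_0\|f\|_{2+\alpha}<\e\|f\|_{2+\alpha}$, while commutator terms arising from the cut-off are absorbed by interpolation \eqref{interpolation} together with Schauder applied to \eqref{LOP1a}. Type~(ii) is bounded directly by $C\|f\|_{1+\alpha}$ in $h^\alpha$. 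For (iii), Corollary~\ref{C:1} applied to \eqref{LOP1a} gives $\|w_\pm^\pi[f]\|_{2+\alpha'}\leq C\|f\|_{2+\alpha'}$ for any $\alpha'\in(0,\alpha)$, so $\|\kL(f_*)w_-^\pi[f]\|_{\alpha}\leq C\|w_-^\pi[f]\|_{1+\alpha}\leq C\|f\|_{2+\alpha'}$, which by \eqref{interpolation} and Young's inequality is $\leq\e\|f\|_{2+\alpha}+K(\e)\|f\|_{1+\alpha}$. Corollary~\ref{C:1} applied to the diffraction problem for $(\tilde w_+,\tilde w_-)$ then yields $\|\tilde w_\pm\|_{2+\alpha}\leq\e\|f\|_{2+\alpha}+K(\e)\|f\|_{1+\alpha}$, whence the same bound carries over to $\|\B(f_*)\tilde w_-\|_{1+\alpha}$. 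Combining with the $\kN_1$-estimate gives \eqref{Claim4}.

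Real-analyticity of $\p_f\Phi_1^\pi$ on $\V$ follows from the rational dependence of the coefficients in \eqref{LOP1} and \eqref{eqs} on $(f_*,h_*)$, the real-analyticity of $(f_*,h_*)\mapsto(v_+^*,v_-^*)$ granted by Theorem~\ref{DP}, and the real-analyticity of the solution operator of \eqref{LOP1a} in $(f_*,h_*)$, which is proved verbatim as in Theorem~\ref{DP} since the principal-part operators $\A_0^\pi$ still satisfy the hypotheses of Corollary~\ref{C:1}. The main obstacle is the careful bookkeeping in the first step: identifying exactly the principal parts of $\p_f\A$ and $\p_f\B$ to match \eqref{LOP1}--\eqref{LOP1a} and verifying that the residual $f''$-coefficients vanish on $\Gamma_0$; once this is achieved, the cut-off/Schauder arguments are direct adaptations of those used in Lemma~\ref{L:NDO}.
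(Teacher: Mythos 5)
Your proposal follows essentially the paper's strategy: form the difference $\tilde w_\pm := w_\pm[f]-w_\pm^\pi[f]$, observe that it solves a diffraction problem whose interior sources contain the $f''$-terms with a coefficient vanishing on $\Gamma_0$, and use a cut-off $\chi=\chi_\delta$ near $\Gamma_0$ together with the Schauder estimate from Corollary~\ref{C:1} and the interpolation property \eqref{interpolation}. The bookkeeping for $\p_f\B(f_*)[f]v_-^*$ and the $f''$-coefficient of $\p_f\A(f_*)[f]v_-^*$, and the argument for real-analyticity, are all correct.

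There is, however, one concrete error. You write $\A(f_*)=\A^\pi_0(f_*)+\kL(f_*)$ and claim $\kL(f_*)$ is a first-order operator. It is not. From the Appendix formula for $\A(f_*)$ and the definition \eqref{eqs} of $\A^\pi_0(f_*)$,
\begin{align*}
\A(f_*)-\A^\pi_0(f_*)=-\frac{2yf_*'}{f_*-d}\,\p_{xy}+\frac{(2y+y^2)f_*'^2}{(f_*-d)^2}\,\p_{yy}-(1+y)\frac{(f_*-d)f_*''-2f_*'^2}{(f_*-d)^2}\,\p_y,
\end{align*}
a second-order operator whose \emph{second-order} coefficients vanish at $y=0$ (similarly on the $+$-side). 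Consequently your estimate for item (iii), $\|\kL(f_*)w_-^\pi[f]\|_\alpha\le C\|w_-^\pi[f]\|_{1+\alpha}$, does not hold: the $\p_{xy}$- and $\p_{yy}$-contributions cost the full $\|w_-^\pi[f]\|_{2+\alpha}\simeq\|f\|_{2+\alpha}$, and the $y$-prefactor alone gives no smallness in the $h^\alpha(\0_-)$-norm over the whole strip. The correction is that these contributions must be handled exactly like your type~(i): after multiplication by $\chi$, the second-order part of $\kL(f_*)$ carries the factor $\|\chi y\|_0\lesssim\delta$, so its Schauder contribution is $\le\e\|f\|_{2+\alpha}$; only the genuinely first-order $\p_y$-term of $\kL(f_*)$ can be absorbed by interpolation as you propose. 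Relatedly, Corollary~\ref{C:1} must be applied to $\chi\tilde w_\pm$, not to $\tilde w_\pm$ itself, since it is only on $\supp\chi$ that the relevant coefficients are small; your final line stating $\|\tilde w_\pm\|_{2+\alpha}\le\e\|f\|_{2+\alpha}+K(\e)\|f\|_{1+\alpha}$ is not what the argument yields. With these corrections your argument matches the paper's proof.
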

\begin{proof} The regularity property follows by using similar arguments as in the proof of Theorem \ref{DP}. In order to prove \eqref{Claim4},
let $\e\in(0,1)$ be given and $\alpha'\in(0,\alpha)$ be fixed. 
We choose $\delta\in(0,1)$ and a cut-off function $\chi:=\chi_\delta\in C^\infty([-1,1])$ such that   $0\leq \chi \leq 1,$
$\chi=1$ for $|y|\leq\delta/2,$ and   $\chi=0$ for $|y|\geq\delta.$
Using  the same notation as in Section \ref{Sec:3} (see \eqref{P2}) we have from \eqref{DC}, \eqref{LOP1}, and the formula for $\p_f\B(f_*)$ in the Appendix \ref{App}
\begin{align}
\|\p_{f}\Phi_1(0,(f_*,h_*))[f]-\p_{f}\Phi_1^\pi(f_*,h_*)[f]\|_{1+\alpha}\leq&  C(\|f\|_{1+\alpha}+\|\B(f_*)\big(w_-[f]-w_-^{\pi}[f]\big)\|_{1+\alpha}\big)\nonumber\\
\leq&  C(\|f\|_{1+\alpha}+\|\tr_0\nabla\big(w_-[f]-w_-^{\pi}[f]\big)\|_{1+\alpha}\big)\nonumber\\
\leq&  C(\|f\|_{1+\alpha}+\|\chi\big(w_-[f]-w_-^{\pi}[f]\big)\|_{2+\alpha}^{\0_-}\big),\label{LOP1b}
\end{align}
where $C$ is independent of $\delta $. 
We now observe that $$(u_+,u_-):= (w_+^{\pi}[f],w_-^{\pi}[f])-(w_+[f],w_-[f])$$ solves a diffraction problem of the form
\begin{equation*}\left\{
\begin{array}{rllllll}
\A^\pi_0(f_*,h_*)u_+\!\!&=&\!\! a_0^+f+a_1^+f'+ya_2^+((1-y)\p_yv_+^*-\tr_0\p_yv_+^*)f''\\
&&+y(b_0^+\p_{xy}w_+[f]+b_1^+\p_{yy}w_+[f])+b_2^+\p_yw_+[f] &\text{in $ \Omega_+$},\\
\A^\pi_0(f_*) u_-\!\!&=&\!\! a_0^-f+a_1^-f'+ya_2^-+((1+y)\p_yv_-^*-\tr_0\p_yv_-^*)f''\\
&&+y(b_0^-\p_{xy}w_-[f]+b_1^-\p_{yy}w_-[f])+b_2^-\p_yw_-[f]&\text{in $ \Omega_-$},\\
 \B(f_*,h_*)u_+-\B(f_*)u_-\!\!&=&\!\!c f&\text{on $ \Gamma_{0}$},\\
u_+-u_- \!\!&=&\!\!0  &\text{on $\Gamma_0$},\\
u_+\!\!&=&\!\!0&\text{on $ \Gamma_1$},\\
 u_-\!\!&=&\!\!0&\text{on $\G_{-1}$},
\end{array}
\right.
\end{equation*}
with functions $a_i^\pm,b_i^\pm\in h^\alpha(\0_\pm), 0\leq i\leq2,$ and $c\in h^{1+\alpha}(\s).$
Therefore, $(u_+^\chi,u_-^\chi):=\chi(u_+,u_-)$ is the solution to
\begin{equation}\label{LOP1c}\left\{
\begin{array}{rllllll}
\A^\pi_0(f_*,h_*)u_+^\chi\!\!&=&\!\! \chi\A^\pi_0(f_*,h_*)u_+-\frac{2f_*'}{ h_*-f_*}\chi'\p_xu_{+}+\frac{f_*'^2+1}{(h_*-f_*)^2}(2\chi'\p_yu_{+}+\chi''u_+) &\text{in $ \Omega_+$},\\
\A^\pi_0(f_*) u_-^\chi\!\!&=&\!\! \chi\A^\pi_0(f_*) u_--\frac{2f_*'}{f_*-d}\chi'\p_xu_{-}+\frac{f_*'^2+1}{(f_*-d)^2}(2\chi'\p_yu_{-}+\chi''u_-)&\text{in $ \Omega_-$},\\
 \B(f_*,h_*)u_+^\chi-\B(f_*)u_-^\chi\!\!&=&\!\!c f&\text{on $ \Gamma_{0}$},\\
u_+^\chi-u_-^\chi \!\!&=&\!\!0  &\text{on $\Gamma_0$},\\
u_+^\chi\!\!&=&\!\!0&\text{on $ \Gamma_1$},\\
 u_-^\chi\!\!&=&\!\!0&\text{on $\G_{-1}$}.
\end{array}
\right.
\end{equation}
We can estimate the solutions to the systems   \eqref{LOP1a} and \eqref{LOP1c}  by using   \eqref{Schauder} and  obtain that 
\begin{align*}
 \|u_-^\chi\|_{2+\alpha}^{\0_-}\leq&C\Big(\|\chi\A^\pi_0(f_*,h_*)u_+\|_\alpha^{\0_+}+\|\chi\A^\pi_0(f_*)u_-\|_\alpha^{\0_-}+\|f\|_{1+\alpha}\Big)+C(\delta)\big(\|u_+\|_{2+\alpha'}+\|u_-\|_{2+\alpha'}\big)\\
 \leq &C\Big(\|\chi((1-y)\p_yv_+^*-\tr_0\p_yv_+^*)f''\|_\alpha^{\0_+}+\|\chi((1+y)\p_yv_-^*-\tr_0\p_yv_-^*)f''\|_\alpha^{\0_-}\\
 &\quad+\|\chi y(b_0^+\p_{xy}w_+[f]+b_1^+\p_{yy}w_+[f])\|_\alpha^{\0_+}+\|\chi y(b_0^-\p_{xy}w_-[f]+b_1^-\p_{yy}w_-[f])\|_\alpha^{\0_-}\Big)\\
 &+C(\delta)\|f\|_{2+\alpha'}\\
 \leq &C\Big(\|\chi((1-y)\p_yv_+^*-\tr_0\p_yv_+^*)\|_0^{\0_+}+\|\chi((1+y)\p_yv_-^*-\tr_0\p_yv_-^*)\|_0^{\0_-}\\
 &\quad+\|\chi y\|_0^{\0_+}+\|\chi y\|_0^{\0_-}\Big)\|f\|_{2+\alpha}+C(\delta)\|f\|_{2+\alpha'}.
\end{align*}
Recalling the definition of $\chi=\chi_\delta$ and choosing $\delta>0$ such that 
$$\|\chi((1-y)\p_yv_+^*-\tr_0\p_yv_+^*)\|_0^{\0_+}+\|\chi((1+y)\p_yv_-^*-\tr_0\p_yv_-^*)\|_0^{\0_-}+\|\chi y\|_0^{\0_+}+\|\chi y\|_0^{\0_-}<\frac{\e}{2C},$$ the desired estimate \eqref{Claim4} follows from the interpolation  property  \eqref{interpolation}.
\end{proof}

Let $(f_0,h_0)$ and $b_0$  be such that the first inequality of \eqref{CT1} holds. We are now left to show that the principal part   $\p_{f}\Phi_1^\pi(f_0,h_0)$ of $\p_f\Phi_1(0,(f_0,h_0))$ is the generator of a strongly continuous and analytic semigroup in $\kL(h^{1+\alpha}(\s))$.
In view of  the classical result \cite[Theorem I.1.2.2]{Am95}, one has $- \p_{f}\Phi_1^\pi(f_0,h_0)\in\kH\big(h^{2+\alpha}(\s), h^{1+\alpha}(\s)\big)$ if 
and only if there exist constants $\kappa_1\geq 1$ and $\omega_1>0$ such that 
$-\p_{f}\Phi_1^\pi(f_0,h_0)\in \kH\big(h^{2+\alpha}(\s), h^{1+\alpha}(\s),\kappa_1,\omega_1\big)$, that is, $$\omega_1- \p_{f}\Phi_1^\pi(f_0,h_0) \in{\rm Isom}\big(h^{2+\alpha}(\s), h^{1+\alpha}(\s)\big)$$ and 
\[
 \qquad \kappa_1^{-1}\leq\frac{\|(\lambda- \p_{f}\Phi_1^\pi(f_0,h_0))f\|_{1+\alpha}}{|\lambda|\cdot \|f\|_{1+\alpha}+\|f\|_{2+\alpha}}\leq \kappa_1\qquad\text{for all $\re\lambda\geq\omega_1$ and $0\neq f\in h^{2+\alpha}(\s)$.}
 \]
To this end, for $\sigma>0$, let $S_\sigma$ denote the set consisting of those $(f_*,h_*)\in\V$ satisfying the inequalities
\begin{equation*}
 \begin{array}{lll}
 (1) & \sigma< \min\{f_*-d, h_*-f_*\}, \qquad\|f_*\|_2+\|h_*\|_2< \sigma^{-1}\,,\\[2ex]
  (2) &\displaystyle g(\rho_--\rho_+)> \sigma+\frac{\tr_0\p_yv_-^*}{h_*-f_*}-\frac{\tr_0\p_yv_+^*}{f_*-d}\,,\\[3ex]
  (3)  &   \|\tr_0\p_yv_+^*\|_0+\|\tr_0\p_yv_-^*\|_0< \sigma^{-1} \,,
 \end{array}
\end{equation*}
where $(v_+^*,v_-^*)$ is defined in \eqref{zero}.
Since the functions $(f_0,h_0)$ and $b_0$ are chosen such that the first inequality in \eqref{CT1} is satisfied, we may choose $\sigma$ such that  $(f_0,h_0)\in S_\sigma$.

\begin{lemma}\label{L:Ad1}
Let $\sigma>0$ be such that  $(f_0,h_0)\in S_\sigma$.
Assume  there exists a constant $\wt \kappa_1:=\wt \kappa_1(\sigma)$ and for each $(f_*,h_*)\in  \big(h^{3+\alpha}(\s)\big)^2\cap S_\sigma\cap \mathbb{B}_{(h^{2+\alpha}(\s))^2}((f_0,h_0),\sigma)$ there exists a further constant $\wt \omega_1>0$ with the property that
$$-\p_{f}\Phi_{1}^{\pi}(f_*,h_*)\in \kH\big(h^{2+\alpha}(\s), h^{1+\alpha}(\s),\wt \kappa_1,\wt \omega_1\big).$$
Then
$$-\p_{f}\Phi_{1}^{\pi}(f_0,h_0)\in \kH\big(h^{2+\alpha}(\s), h^{1+\alpha}(\s)\big).$$
\end{lemma}
\begin{proof} Using the regularity assertions in Theorem \ref{DP} and Lemma \ref{L:LOP1} together with the density of $h^{3+\alpha}(\s)$ in $h^{2+\alpha}(\s),$ we find  $(f_*,h_*)\in  \big(h^{3+\alpha}(\s)\big)^2\cap S_\sigma\cap \mathbb{B}_{(h^{2+\alpha}(\s))^2}((f_0,h_0),\sigma)$ 
such that 
\[\|\p_{f}\Phi_{1}^{\pi}(f_*,h_*)-\p_{f}\Phi_{1}^{\pi}(f_0,h_0)\|_{\kL\big(h^{2+\alpha}(\s), h^{1+\alpha}(\s)\big)}<1/2\wt\kappa_1.\]
The perturbation result \cite[Theorem I.1.3.1 (i)]{Am95} implies
$$-\p_{f}\Phi_{1}^{\pi}(f_0,h_0)\in \kH\big(h^{2+\alpha}(\s), h^{1+\alpha}(\s),2\wt \kappa_1,\wt \omega_1\big),$$
which yields the desired claim.
\end{proof}
\begin{rem}\label{R:ad}
 We will see in the proof of Theorem \ref{T:c1} that,  in contrast to $\kappa_1,$   the constants $\omega_1$ in Lemma \ref{L:Ad1} appear to  depend on the $\|\cdot\|_{3+\alpha}$-norm  of $(f_*,h_*)$.
 Whence, for $(f_*,h_*)$ close to $(f_0,h_0)$, these constants may become large. 
\end{rem}

We are now left to  establish the assumptions of  Lemma \ref{L:Ad1} for some sufficiently small $\sigma$.
Therefore, we pick an arbitrary $(f_*,h_*)\in  \big(h^{3+\alpha}(\s)\big)^2\cap S_\sigma$
and  introduce a parameter $\tau\in[0,1] $ which will enable us to continuously transform
the leading order part  $\p_{f}\Phi_1^{\pi*}:= \p_{f}\Phi_1^\pi(f_*,h_*)$ of $\p_f\Phi_1(0,(f_*,h_*))$ into a (negative) Dirichlet-Neumann map.
This will allow us to use the continuation method and prove, by relaying on the properties of this  Dirichlet-Neumann map, that large positive real numbers belong to the resolvent set of $\p_{f}\Phi_1^\pi(f_*,h_*)$.
 We emphasize that this construction uses to a large extent  the additional regularity of $(f_*,h_*)$, as  the mappings $(v_+^*,v_-^*)$ introduced in \eqref{zero}  possess additional regularity close to the boundary $\Gamma_0$ in this case. 
More precisely, for each $\tau\in[0,1]$ we introduce the operator
$\p_{f}\Phi_{1,\tau}^{\pi*}\in\kL\big(h^{2+\alpha}(\s), h^{1+\alpha}(\s)\big)$  by  
 \begin{equation}\label{HO1}
 \p_{f}\Phi_{1,\tau}^{\pi*}[f]:=-\frac{\tau k}{\mu_-}\Big( \frac{2f'_{*}\tr_0 \p_yv_-^*}{ f_*-d} -\tr_0 \p_x v_-^*  \Big)f'  -\B(f_*) w_{-\tau}^\pi[f],\quad f\in h^{2+\alpha}(\s),
 \end{equation}
 with   $(v_+^*,v_-^*)$ being defined in \eqref{zero} and  $(w_{+\tau}^\pi[f],w_{-\tau}^\pi[f])$  denoting the solution to 
  \begin{equation}\label{HO1a}
\left\{
\begin{array}{rllllll}
\A^\pi_0(f_*,h_*) w_{+\tau}^\pi[f]\!\!&=&\!\!\cfrac{\tau\tr_0\p_yv_+^*}{h_*-f_*}f'' &\text{in $ \Omega_+$},\\
\A^\pi_0(f_*) w_{-\tau}^\pi[f]\!\!&=&\!\!\cfrac{\tau\tr_0\p_y v_-^* }{f_*-d}f''&\text{in $ \Omega_-$},\\
 \B(f_*,h_*)w_{+\tau}^\pi[f]-\B(f_*)w_{-\tau}^\pi[f]\!\!&=&\!\!-\tau\p_f\B^\pi(f_*,h_*)[f]v_+^*+\tau\p_f\B^\pi(f_*)[f]v_-^*&\text{on $ \Gamma_{0}$},\\
w_{+\tau}^\pi[f]-w_{-\tau}^\pi[f]\!\!&=&\!\!-\Big[g(\rho_--\rho_+)+(1-\tau)\Big(\cfrac{\tr_0\p_yv_+^*}{h_*-f_*}-\cfrac{\tr_0\p_yv_-^*}{f_*-d}\Big) \Big]f&\text{on $\Gamma_0$},\\
w_{+\tau}^\pi[f]\!\!&=&\!\!0&\text{on $ \Gamma_1$},\\
 w_{-\tau}^\pi[f]\!\!&=&\!\!0&\text{on $\G_{-1}$}.
\end{array}
\right.
\end{equation}
 As $(f_*,h_*)\in \big(h^{3+\alpha}(\s)\big)^2\cap S_\sigma$, a density argument together with \cite[Theorem 9.3]{ADN64} implies that 
 the transformed potentials $(v_+^*,v_-^*)$  satisfy
 \begin{equation}\label{adre}
   \frac{\tr_0\p_yv_-^*}{f_*-d}-\frac{\tr_0\p_yv_+^*}{h_*-f_*}\in h^{2+\alpha}(\s),
 \end{equation}
 and therefore $(w_{+\tau}^\pi[f],w_{-\tau}^\pi[f])$ is well-defined, cf. Corollary \ref{C:1}.
For $\tau=1$ we recover the   leading order part  $\p_{f}\Phi_{1,1}^{\pi*}= \p_{f}\Phi_1^\pi(f_*,h_*)$ of $\p_{f}\Phi_1(0,(f_*,h_*)) $, while $\p_{f}\Phi_{1,0}^{\pi*} $ is the above mentioned Dirichlet-Neumann map.
Our method uses localization techniques based on a suitable partition of unity allowing us to keep up the setting of periodic functions.\bigskip

\noindent{\bf Partition of unity.}
For each $p\geq 3$ there exists a family of functions  $\{\Pi_j^p\}_{1\leq j\leq 2^{p+1}}\subset  C^\infty(\s,[0,1])$ such that
\begin{itemize}
\item[$(i)$] $\supp \Pi_j^p=\cup_{n\in\Z} \big(2\pi n+ I_j^p\big)$, where $I_j^p:=[j-5/3,j-1/3] \pi/2^p;$
 \item[$(ii)$] $\sum_{j=1}^{2^{p+1}}\Pi_j^p=1 $ in $C(\s)$.
\end{itemize}
The center  of the interval $I_j^p$ for $1\leq j\leq 2^{p+1}$ is denoted by $x_j^p:=(j-1)\pi/2^p$. 
We call the family $\{\Pi_j^p\}_{1\leq j\leq 2^{p+1}}$ a {\em $p$-partition of unity}.
\medskip

Moreover, let $\{\chi_j^p\}_{1\leq j\leq 2^{p+1}}\subset  C^\infty(\s,[0,1])$ be a corresponding family of functions such that 
\begin{itemize}
\item[$(a)$] $\supp \chi_j^p=\cup_{n\in\Z} \big(2\pi n+ J_j^p\big)$ and $I_j^p\subset J_j^p$;
 \item[$(b)$] $\chi_j^p=1$ on $I_j^p$.
\end{itemize}
We can achieve that $J_j^p=I_{j-1}^p\cup I_{j}^p\cup I_{j+1}^p,$ where  $I_0^p:=-2\pi +I_{2^{p+1}}^p$ and $I_{2^{p+1}+1}^p:=2\pi+I_1^p$, so that $I_j^p$ and $J_j^p$ have the same center $x_j^p.$\medskip

The following remark is a simple exercise.

\begin{rem}\label{R:1}
 Given $k, p\in\N$ with $ p\geq 3$ and $\alpha\in(0,1),$ the mapping
$$f\mapsto \max_{1\leq j\leq 2^{p+1}} \|\Pi_j^p f\|_{k+\alpha}$$
defines a norm on $h^{k+\alpha}(\s)$ which is  equivalent to the $\|\cdot\|_{k+\alpha}$- norm. 
\end{rem}

The following  perturbation type result lies at the core  of our analysis.

\begin{thm}\label{T1} 
Let $\sigma>0$ be such that  $(f_0,h_0)\in S_\sigma$ and  let    $\mu>0$ and  $\alpha'\in(0,\alpha)$ be given.
Then, given $(f_*,h_*)\in  \big(h^{3+\alpha}(\s)\big)^2\cap S_\sigma$, there exist an integer $p\geq 3$, a $p$-partition of unity $\{\Pi_j^p\}_{1\leq j\leq 2^{p+1}}$, and a constant $K_2=K_2(p)$, and for each $\tau\in[0,1]$ and $1\leq j\leq 2^{p+1}$ there 
are bounded operators $\bA_{\tau,j}\in\kL\big(h^{2+\alpha}(\s), h^{1+\alpha}(\s)\big)$
 such that 
 \begin{equation}\label{DE}
  \|\Pi_j^p\p_f\Phi^{\pi*}_{1,\tau}[f]-\bA_{\tau,j}[\Pi^p_j f]\|_{1+\alpha}\leq \mu \|\Pi_j^pf\|_{2+\alpha}+K_2\|f\|_{2+\alpha'}
 \end{equation}
 for all $f\in h^{2+\alpha}(\s)$. The operators $\bA_{\tau,j}$ are defined  by the formula
  \begin{equation}\label{HOj}
 \bA_{\tau,j}[f]:=-\frac{\tau k}{\mu_-}\Big( \frac{2f'_{0}\tr_0\p_yv_-^*}{f_*-d} - \tr_0\p_xv_-^*\Big)\Big|_{x_j^p}f'  - \frac{k}{\mu_-}\Big(\frac{1+f_*'^2}{ f_*-d}\Big|_{x_j^p}\tr_0 \p_yw_{-\tau}^{\pi,j}[f]-f'_0(x_j^p)\tr_0 \p_xw_{-\tau}^{\pi,j}[f]\Big),
 \end{equation}
 where 
 $( w_{+\tau}^{\pi,j}[f], w_{-\tau}^{\pi,j}[f])$ denotes the solution to the problem
 \begin{equation}\label{HO1b}
\left\{
\begin{array}{rllllll}
\A^\pi_{0,j}(f_*,h_*) w_{+\tau}^{\pi,j}[f]\!\!&=&\!\!\tau A_+f'' &\text{in $ \Omega_+$},\\
\A^\pi_{0,j}(f_*) w_{-\tau}^{\pi,j}[f]\!\!&=&\!\!\tau A_-f''&\text{in $ \Omega_-$},\\
 \B_j(f_*,h_*)w_{+\tau}^{\pi,j}[f]-\B_j(f_*)w_{-\tau}^{\pi,j}[f]\!\!&=&\!\!\tau B f'&\text{on $ \Gamma_{0}$},\\
w_{+\tau}^{\pi,j}[f]-w_{-\tau}^{\pi,j}[f]\!\!&=&\!\!-\big[\Delta_\rho+(1-\tau)\Delta_A] f&\text{on $\Gamma_0$},\\
w_{+\tau}^{\pi,j}[f]\!\!&=&\!\!0&\text{on $ \Gamma_1$},\\
 w_{-\tau}^{\pi,j}[f]\!\!&=&\!\!0&\text{on $\G_{-1}$},
\end{array}
\right.
\end{equation}
and
\begin{align*}
&A_+:=\cfrac{ \tr_0\p_yv_+^*}{h_* -f_* }\Big|_{x_j^p},\qquad A_-:=\cfrac{  \tr_0\p_yv_-^* }{f_*-d}\Big|_{x_j^p},\qquad \Delta_\rho:=g(\rho_--\rho_+),\qquad \Delta_A:=A_+-A_-,\\
&B:=\frac{ k}{\mu_-}\Big(\cfrac{2f_*'\tr_0\p_yv_-^*}{f_*-d}-\tr_0\p_xv_-^*\Big)\Big|_{x_j^p}- \frac{k}{\mu_+}\Big(\cfrac{2f_*'\tr_0\p_yv_+^*}{h_* -f_* }-\tr_0\p_xv_+^*\Big)\Big|_{x_j^p}.
\end{align*}
Moreover,  $\A^\pi_{0,j}(f_*,h_*),$ $ \A^\pi_{0,j}(f_*),$ $ \B_j(f_*,h_*),$ and $\B_j(f_*)$ are the operators obtained from  $\A^\pi_0 (f_*,h_*), $ $\A^\pi_0 (f_*),$  $\B (f_*,h_*),$ and $\B (f_*)$, respectively, when evaluating their coefficients at  $x_j^p$.
\end{thm}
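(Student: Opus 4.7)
The operator $\bA_{\tau,j}$ is exactly the one obtained from $\p_f\Phi^{\pi*}_{1,\tau}$ by the two-step procedure: replace every $x$-dependent coefficient by its value at the center $x_j^p$ of $I_j^p$, and replace $f$ by $\Pi_j^p f$ in the data of the auxiliary diffraction problem \eqref{HO1b}. Consequently, \eqref{DE} is a ``frozen coefficients'' estimate on the interval of length $\sim 2^{-p}$ supporting $\Pi_j^p$: on such a small interval the full and frozen operators differ by a small perturbation; all remaining discrepancies come from commutators with $\Pi_j^p$ and contribute only to the lower-order term $K_2\|f\|_{2+\alpha'}$. My plan is to quantify these two effects separately, using the Schauder estimate from Corollary~\ref{C:1} and the interpolation property \eqref{interpolation} together with Young's inequality.

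\textbf{Step 1: splitting.} For $f\in h^{2+\alpha}(\s)$ set $u_\pm:=w_{\pm\tau}^\pi[f]$ (solving \eqref{HO1a}) and $v_\pm:=w_{\pm\tau}^{\pi,j}[\Pi_j^p f]$ (solving \eqref{HO1b}). Decompose
\[
\Pi_j^p\p_f\Phi^{\pi*}_{1,\tau}[f]-\bA_{\tau,j}[\Pi_j^p f]\;=\;R_1+R_2+R_3,
\]
where $R_1$ gathers the first-order contributions of the form $\Pi_j^p a(x)f'-a(x_j^p)(\Pi_j^p f)'$ arising from the explicit $f'$-terms in \eqref{HO1} and \eqref{HOj}; $R_2:=\bigl[\B_j(f_*)-\B(f_*)\bigr]\Pi_j^p u_-+[\B(f_*),\Pi_j^p]u_-$ collects the comparison between the trace operators and the commutator with $\Pi_j^p$; and $R_3:=\B_j(f_*)(\Pi_j^p u_- - v_-)$ encodes the difference of the two diffraction solutions.

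\textbf{Step 2: algebraic and commutator pieces.} For $R_1$ the key identity is $\Pi_j^p a(x)f'-a(x_j^p)(\Pi_j^p f)'=\Pi_j^p(a-a(x_j^p))f'-a(x_j^p)(\Pi_j^p)'f$. Multiplying $(a-a(x_j^p))$ by the cut-off $\chi_j^p$ (which equals $1$ on $\supp\Pi_j^p$) one has $\|(a-a(x_j^p))\chi_j^p\|_{C^0}=O(2^{-p})$ with uniformly bounded $C^{1+\alpha}$-norm, since $a\in h^{1+\alpha}(\s)$; interpolating gives smallness in $C^{\alpha'}$. A standard product estimate in $h^{1+\alpha}$ together with Young's inequality and \eqref{interpolation} then bounds the first summand by $\mu\|\Pi_j^p f\|_{2+\alpha}+K(p)\|f\|_{2+\alpha'}$. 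The second summand $a(x_j^p)(\Pi_j^p)'f$ involves only $p$-dependent (hence finite) derivatives of $\Pi_j^p$ against $f$, so is majorized by $K(p)\|f\|_{2+\alpha'}$ via $h^{2+\alpha'}\hookrightarrow h^{1+\alpha}$. An entirely analogous treatment, combined with the a priori bound $\|u_{\pm}\|_{2+\alpha'}\le C\|f\|_{2+\alpha'}$ obtained by applying Corollary~\ref{C:1} to \eqref{HO1a} with $\alpha$ replaced by $\alpha'$, handles $R_2$.

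\textbf{Step 3: diffraction comparison.} The pair $(\Pi_j^p u_+-v_+,\,\Pi_j^p u_--v_-)$ satisfies a diffraction system with the frozen operators $\A^\pi_{0,j}(f_*,h_*)$, $\A^\pi_{0,j}(f_*)$ and $\B_j$ on the left, while the right-hand side consists of four types of terms: (i)~coefficient-variation terms $\bigl(\A^\pi_0-\A^\pi_{0,j}\bigr)(\Pi_j^p u_\pm)$, bounded in $h^\alpha$ by $\mu\|\Pi_j^p u_\pm\|_{2+\alpha}$ exactly as in Step~2, (ii)~commutators $[\A^\pi_0,\Pi_j^p]u_\pm$, producing only lower-order $\|f\|_{2+\alpha'}$-terms with $p$-dependent constants, (iii)~right-hand side comparisons of the same algebraic type as in Step~1 but now involving $f''$ (here the additional regularity $(f_*,h_*)\in (h^{3+\alpha}(\s))^2$ enters through \eqref{adre} to guarantee that the frozen coefficients $A_\pm$, $B$ are well defined and sufficiently regular), (iv)~the boundary contribution on $\Gamma_0$, for which the special choice $-[\Delta_\rho+(1-\tau)\Delta_A]f$ in \eqref{HO1b} is designed precisely to cancel the leading order mismatch between the two transmission conditions. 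Since the frozen operators inherit ellipticity bounds from $(f_*,h_*)\in S_\sigma$ uniformly in $j$, Corollary~\ref{C:1} yields $\|\Pi_j^p u_--v_-\|_{2+\alpha}\le \mu\|\Pi_j^p f\|_{2+\alpha}+K(p)\|f\|_{2+\alpha'}$, and the same bound carries over to $R_3$. Summing the estimates of Steps~2 and~3 and choosing $p$ large enough that every coefficient-variation constant falls below $\mu/C$ gives \eqref{DE} with $K_2:=K(p)$.

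\textbf{Main obstacle.} The delicate point is Step~3: one must simultaneously (a)~obtain smallness of the coefficient variations in the $C^\alpha$-norm required by the Schauder estimate, for which the above $C^0$-smallness plus $C^{1+\alpha}$-boundedness plus interpolation is essential, and (b)~ensure that the Schauder constants do not depend on $j$ or $p$; this second point uses crucially that the frozen operators have identical ellipticity moduli and identical uniform bounds coming from the definition of $S_\sigma$. The tailored boundary correction in \eqref{HO1b} is forced by precisely this cancellation requirement.
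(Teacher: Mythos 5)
Your proposal captures the paper's strategy — the operator $\bA_{\tau,j}$ arises by freezing coefficients at $x_j^p$ and evaluating the diffraction data at $\Pi_j^p f$, and \eqref{DE} follows from smallness of coefficient variations on the support of $\Pi_j^p$ combined with the Schauder estimate of Corollary~\ref{C:1}. Your $R_1+R_2+R_3$ splitting is a reorganization of the paper's $T_1+T_2+T_3$: the paper's $T_2$, $T_3$ separate the $\p_y$- and $\p_x$-pieces of $\B(f_*)$ and then subdivide each into a $(1-\chi_j^p)$-piece, a coefficient-variation piece, and the diffraction comparison $\chi_j^p\big(\Pi_j^p w^\pi_{-\tau}[f]-w^{\pi,j}_{-\tau}[\Pi_j^p f]\big)$, while you collect the commutator/coefficient pieces into $R_2$ and the diffraction comparison into $R_3$.

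Two technical inaccuracies in your sketch deserve flagging. First, in Step~2 the claim that $\|(a-a(x_j^p))\chi_j^p\|_{C^{1+\alpha}}$ is \emph{uniformly bounded} in $p$ is false: since $(\chi_j^p)'$ scales like $2^p$, the relevant H\"older seminorms grow like $2^{p\alpha}$. What is actually used in the paper is only $\|(a-a(x_j^p))\chi_j^p\|_{C^0}=O(2^{-p})$ together with the product inequality $\|ab\|_{1+\alpha}\le \|a\|_0\|b\|_{1+\alpha}+C\|a\|_{1+\alpha}\|b\|_1$, so that the $p$-dependent factor lands on the lower-order norm of $b$ and is absorbed into $K_2(p)\|f\|_{2+\alpha'}$; the interpolation-to-$C^{\alpha'}$ detour is not what the paper does and, in the form you state it, starts from a false premise. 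Second, in Step~3 you place the coefficient variations on $\Pi_j^p u_\pm$ (with $u_\pm=w^\pi_{\pm\tau}[f]$, the variable-coefficient solution driven by the full $f$). This requires the localized a priori bound $\|\Pi_j^p u_\pm\|_{2+\alpha}\lesssim\|\Pi_j^p f\|_{2+\alpha}+K(p)\|f\|_{2+\alpha'}$, which is nontrivial because the diffraction problem is nonlocal and is not established in your proposal. The paper sidesteps this entirely by instead applying the coefficient variations to $\chi_j^p v_\pm$ with $v_\pm=w^{\pi,j}_{\pm\tau}[\Pi_j^p f]$: since $v_\pm$ solves the \emph{frozen} problem driven by $\Pi_j^p f$, Corollary~\ref{C:1} immediately gives $\|v_\pm\|_{2+\alpha}\le C\|\Pi_j^p f\|_{2+\alpha}$ with $C$ independent of $j$ and $p$. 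Both issues are patchable, but as written Step~3 contains a genuine gap.
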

\begin{proof} Let  $\alpha'\in(0,\alpha)$ and $\mu>0$ be fixed. Given $p\geq 3$, a $p$-partition of unity $\{\Pi_j^p\}_{1\leq j\leq 2^{p+1}}$, and $\tau\in[0,1]$ we decompose the difference 
$\Pi_j^p\p_f\Phi^{\pi*}_{1,\tau}[f]-\bA_{\tau,j}[\Pi^p_j f]=T_1+T_2+T_3$ by setting
\begin{align*}
 &T_1:=\frac{\tau k}{\mu_-}\Big( \frac{2f_*'\tr_0\p_yv_-^*}{f_*-d} - \tr_0\p_xv_-^*\Big)\Big|_{x_j^p}(\Pi^p_j f)' -\frac{\tau k}{\mu_-}\Big( \frac{2f_*'\tr_0 \p_yv_-^*}{ f_*-d} -\tr_0 \p_x v_-^*  \Big)\Pi^p_j f',\\
 &T_2:=   \frac{k}{\mu_-}\frac{1+f_*'^2}{f_*-d}\Big|_{x_j^p}\tr_0 \p_yw_{-\tau}^{\pi,j}[\Pi^p_jf]-\frac{k}{\mu_-}\frac{1+f_*'^2 }{f_* -d}\Pi^p_j\tr_0 \p_yw_{-\tau}^\pi[f],\\
 &T_3:=\frac{k}{\mu_-}f_*'\Pi^p_j\tr_0 \p_xw_{-\tau}^\pi[f]-\frac{k}{\mu_-}f_*'(x_j^p)\tr_0 \p_xw_{-\tau}^{\pi,j}[\Pi^p_jf]
\end{align*}
and estimate each of these terms separately. 
In the following we shall denote  constants which are independent of  $p$ (and, of course, of $f\in h^{2+\alpha}(\s)$, $\tau\in[0,1],$ and $j\in\{1,\ldots, 2^{p+1}\}$)  by $C$.  \medskip

\noindent{\bf The estimate for $T_1$.} 
Noticing that $\|\Pi_j^p f''\|_{\alpha}\leq \|(\Pi_j^p f)''\|_{\alpha}+K\| f\|_{2}$ and using the fact that $\chi_j^p\Pi_j^p=\Pi_j^p$, we have
\begin{align*}
 \|T_1\|_{1+\alpha}\leq & 
 K\|f\|_{1+\alpha}+C\Big\|\chi_j^p\Big[\Big( \frac{2f_*'\tr_0\p_yv_-^*}{f_*-d} - \tr_0\p_xv_-^*\Big)\Big|_{x_j^p}-\Big( \frac{2f_*'\tr_0 \p_yv_-^*}{ f_*-d} -\tr_0 \p_x v_-^*  \Big)\Big]\Pi_j^p f'\Big\|_{1+\alpha}\\
 \leq &C\Big[\Big\|\chi_j^p\Big( \frac{f_*'\tr_0\p_yv_-^*}{f_*-d}\Big|_{x_j^p} - \frac{f_*'\tr_0\p_yv_-^*}{f_*-d}\Big)\Big\|_{0}+ \|\chi_j^p (  \tr_0 \p_x v_-^* -  \p_x v_-^*(x_j^p,0) \|_{0}\Big]\|\Pi_j^p f\|_{2+\alpha}\\
 &+K\|f\|_{2}.
\end{align*}
Choosing $p$ sufficiently large and using the uniform continuity of  the functions in the square brackets, we are led   to the estimate
\begin{align}
 \|T_1\|_{1+\alpha}
 \leq &(\mu/3)\|\Pi_j^p f\|_{2+\alpha}+K\|f\|_{2}.\label{T1_D}
\end{align}

\noindent{\bf The estimate for $T_2$ and $T_3$.} The terms  $T_2$ and $T_3$ are estimated in a similar way. However, due to the nonlocal character of these expressions, the arguments are more
involved than in the previous step.
It is easy to see that
\begin{align}
\|T_3\|_{1+\alpha}\leq &  C\|f_*'\Pi^p_j \tr_0 \p_xw_{-\tau}^\pi[f]-f_*'(x_j^p)\tr_0 \p_xw_{-\tau}^{\pi,j}[\Pi^p_jf]\|_{1+\alpha}\nonumber\\
\leq& C\|\chi_j^pf_*' \tr_0 \p_x\big(\Pi^p_j w_{-\tau}^\pi[f]\big)-f_*'(x_j^p)\tr_0 \p_xw_{-\tau}^{\pi,j}[\Pi^p_jf]\|_{1+\alpha}+K\|f\|_{2+\alpha'}\nonumber\\
\leq &C\|\chi_j^pf_*' \tr_0 \p_x\big(\Pi^p_j w_{-\tau}^\pi[f]\big)-\chi_j^pf_*'(x_j^p)\tr_0 \p_xw_{-\tau}^{\pi,j}[\Pi^p_jf]\|_{1+\alpha}\nonumber\\
&+ C\|(1-\chi_j^p)f_*'(x_j^p)\tr_0 \p_xw_{-\tau}^{\pi,j}[\Pi^p_jf]\|_{1+\alpha}+K\|f\|_{2+\alpha'}\nonumber\\
=:&T_{31}+T_{32}+K\|f\|_{2+\alpha'},\label{T3_D12}
\end{align}
where we used  once more the fact that $\chi_j^p\Pi_j^p=\Pi_j^p$.
We first note that
\begin{align*}
 T_{32}\leq &C\|(1-\chi_j^p) \tr_0 \p_xw_{-\tau}^{\pi,j}[\Pi^p_jf]\|_{1+\alpha}\leq K\|f\|_{2+\alpha'}+C\|  (1-\chi_j^p)w_{-\tau}^{\pi,j}[\Pi^p_jf] \|_{2+\alpha}^{\0_-}.
\end{align*}
The pair $(u_+,u_-):=(1-\chi_j^p)\big(w_{+\tau}^{\pi,j}[\Pi^p_jf],w_{-\tau}^{\pi,j}[\Pi^p_jf] \big)$
is the solution to
\begin{equation*} 
\left\{
\begin{array}{rllllll}
\A^\pi_{0,j}(f_*,h_*)u_+\!\!&=&\!\!\tau A_+(1-\chi_j^p)(\Pi_j^pf)''-(\chi_j^p)''w_{+\tau}^{\pi,j}[\Pi^p_jf]\\
&& -2(\chi_j^p)'\p_xw_{+\tau}^{\pi,j}[\Pi^p_jf]+\cfrac{2f_*'}{ h_*-f_*}\Big|_{x_j^p}(\chi_j^p)'\p_yw_{+\tau}^{\pi,j}[\Pi^p_jf]&\text{in $ \Omega_+$},\\
\A^\pi_{0,j}(f_*) u_-\!\!&=&\!\!\tau A_-(1-\chi_j^p)(\Pi_j^pf)''-(\chi_j^p)''w_{-\tau}^{\pi,j}[\Pi^p_jf]\\
&&-2(\chi_j^p)'\p_xw_{-\tau}^{\pi,j}[\Pi^p_jf]+\cfrac{2f_*'}{ f_*-d}\Big|_{x_j^p}(\chi_j^p)'\p_yw_{-\tau}^{\pi,j}[\Pi^p_jf]&\text{in $ \Omega_-$},\\
 \B_j(f_*,h_*)u_+-\B_j(f_*)u_-\!\!&=&\!\!\tau B (1-\chi_j^p)(\Pi_j^pf)'+k\mu_+^{-1} (\chi_j^p)'f_*'(x_j^p)\tr_0w_{+\tau}^{\pi,j}[\Pi^p_jf]\\
 &&-k\mu_-^{-1} (\chi_j^p)'f_*'(x_j^p)\tr_0 w_{-\tau}^{\pi,j}[\Pi^p_jf])&\text{on $ \Gamma_{0}$},\\
u_+-u_-\!\!&=&\!\!-\big[\Delta_\rho+(1-\tau)\Delta_A] (1-\chi_j^p)\Pi_j^pf&\text{on $\Gamma_0$},\\
u_+\!\!&=&\!\!0&\text{on $ \Gamma_1$},\\
 u_-\!\!&=&\!\!0&\text{on $\G_{-1}$},
\end{array}
\right.
\end{equation*}
and, because of $(1-\chi_j^p)\Pi_j^p=0,$ we end up with
\begin{align}
 T_{32}\leq  K\|f\|_{2+\alpha'}+C\|  u_- \|_{2+\alpha}^{\0_-}\leq K\|f\|_{2+\alpha'}.\label{T3_D1}
\end{align}
The remaining term $T_{31}$ can be estimated as 
\begin{align}
 T_{31}\leq &C\|\chi_j^p\big(f_*' -f_*'(x_j^p)\big)\tr_0 \p_xw_{-\tau}^{\pi,j}[\Pi^p_jf]\|_{1+\alpha}+C\|\chi_j^p \tr_0 \p_x\big(\Pi^p_j w_{-\tau}^\pi[f] - w_{-\tau}^{\pi,j}[\Pi^p_jf]\big)\|_{1+\alpha}\nonumber\\
 \leq&(\mu/6)\|\Pi^p_jf\|_{2+\alpha}+K\|f\|_{2+\alpha'}+C\|\chi_j^p\tr_0 \p_x\big( \Pi^p_j w_{-\tau}^\pi[f] -w_{-\tau}^{\pi,j}[\Pi^p_jf]\big)\|_{1+\alpha}\label{T3_D1a}
\end{align}
if $p$ is sufficiently large.
We are left to estimate the last term in \eqref{T3_D1a}. We note that
\begin{align}
 \|\chi_j^p \tr_0 \p_x\big( \Pi^p_j w_{-\tau}^\pi[f] -w_{-\tau}^{\pi,j}[\Pi^p_jf]\big)\|_{1+\alpha}\leq & K\|f\|_{2+\alpha'}+\|\chi_j^p\big( w_{-\tau}^{\pi,j}[\Pi^p_jf]-\Pi^p_j w_{-\tau}^\pi[f] \big)\|_{2+\alpha}^{\0_-}.\label{T3_D1b}
\end{align}
We now infer from the fact that the pair $$(z_+,z_-):= \chi_j^{p}\big((w_{+\tau}^{\pi,j}[\Pi^p_jf],w_{-\tau}^{\pi,j}[\Pi^p_jf] )-\Pi^p_j(w_{+\tau}^\pi[f],w_{-\tau}^\pi[f] )\big)$$
solves the diffraction problem
 \begin{equation*} 
\left\{
\begin{array}{rllllll}
\A^\pi_0 (f_*,h_*) z_+\!\!&=&\!\!(\A^\pi_0 (f_*,h_*)-\A^\pi_{0,j}(f_*,h_*))[\chi_j^pw_{+\tau}^{\pi,j}[\Pi^p_jf]]\\
&&+\tau A_+\chi_j^p(\Pi_j^pf)''-\frac{\tau\tr_0\p_yv_+^*}{h_*-f_*}\Pi_j^pf''\\
&&+(\chi_j^p)''w_{+\tau}^{\pi,j}[\Pi^p_jf]-(\Pi_j^p)''w_{+\tau}^\pi[f]\\
&&+2(\chi_j^p)'\p_xw_{+\tau}^{\pi,j}[\Pi^p_jf]-2(\Pi_j^p)'\p_xw_{+\tau}^\pi[f] \\
&&+\frac{2f_*'}{h_*-f_*}(\Pi_j^p)'\p_yw_{+\tau}^\pi[f]-\frac{2f_*'}{h_*-f_*}\Big|_{x_j^p}(\chi_j^p)'\p_yw_{+\tau}^{\pi,j}[\Pi^p_jf]&\text{in $ \Omega_+$},\\
\A^\pi_0 (f_*) z_-\!\!&=&\!\!(\A^\pi_0 (f_*)-\A^\pi_{0,j}(f_*))[\chi_j^pw_{-\tau}^{\pi,j}[\Pi^p_jf]]\\
&&+\tau A_-\chi_j^p(\Pi_j^pf)''-\frac{\tau\tr_0\p_yv_-^*}{f_*-d}\Pi_j^pf''\\
&&+(\chi_j^p)''w_{-\tau}^{\pi,j}[\Pi^p_jf]-(\Pi_j^p)''w_{-\tau}^\pi[f]\\
&&+2(\chi_j^p)'\p_xw_{-\tau}^{\pi,j}[\Pi^p_jf]-2(\Pi_j^p)'\p_xw_{-\tau}^\pi[f] \\
&&+\frac{2f_*'}{f_*-d}(\Pi_j^p)'\p_yw_{-\tau}^\pi[f]-\frac{2f_*'}{f_*-d}\Big|_{x_j^p}(\chi_j^p)'\p_yw_{-\tau}^{\pi,j}[\Pi^p_jf]&\text{in $ \Omega_-$},\\
 \B (f_*,h_*)z_+-\B (f_*)z_-\!\!&=&\!\!(\B(f_*,h_*)-\B_j(f_*,h_*))[\chi_j^pw_{+\tau}^{\pi,j}[\Pi^p_jf]]\\
 &&-(\B(f_*)-\B_j(f_*))[\chi_j^pw_{-\tau}^{\pi,j}[\Pi^p_jf]]+\tau B \chi_j^p(\Pi_j^pf)'\\
 &&+\tau\Pi_j^p\p_f\B^\pi(f_*,h_*)[f]v_+^*-\tau\Pi_j^p\p_f\B^\pi(f_*)[f]v_-^*\\
&&-k\mu_+^{-1}\big( (\chi_j^p)'f_*'(x_j^p)\tr_0w_{+\tau}^{\pi,j}[\Pi^p_jf]-(\Pi_j^p)'f_*'\tr_0w_{+\tau}^\pi[f]\big)\\
 &&+k\mu_-^{-1}\big( (\chi_j^p)'f_*'(x_j^p)\tr_0 w_{-\tau}^{\pi,j}[\Pi^p_jf]-(\Pi_j^p)'f_*'\tr_0w_{-\tau}^\pi[f]\big)&\text{on $ \Gamma_{0}$},\\
z_+-z_-\!\!&=&\!\!(1-\tau)\chi_j^p\Big(\frac{\tau\tr_0\p_yv_+^*}{h_*-f_*}-\frac{\tau\tr_0\p_yv_-^*}{f_*-d}-\Delta_A\Big)\Pi_j^p f&\text{on $\Gamma_0$},\\
z_+\!\!&=&\!\!0&\text{on $ \Gamma_1$},\\
 z_-\!\!&=&\!\!0&\text{on $\G_{-1}$}
\end{array}
\right.
\end{equation*}
 and the Schauder estimate  \eqref{Schauder}  that
 \begin{align*}
\|z_-\|_{2+\alpha}^{\0_-}\leq &  C\Big(\|\big(\A^\pi_0(f_*,h_*)-\A^\pi_{0,j}(f_*,h_*)\big)
\big[\chi_j^p w_{+\tau}^{\pi,j}[\Pi^p_jf]\big]\|_\alpha+\Big\|\chi_j^p\Big(\cfrac{\tr_0\p_yv_+^*}{h_*-f_*}-A_+\Big)\Big\|_0\|\Pi_j^pf''\|_\alpha\\
 &\quad+\|\big(\A^\pi_0(f_*)-\A^\pi_{0,j}(f_*)\big)\big[\chi_j^p w_{-\tau}^{\pi,j}[\Pi^p_jf]\big]\|_\alpha
 +\Big\|\chi_j^p\Big(\cfrac{\tr_0\p_yv_-^*}{f_*-d}-A_-\Big)\Big\|_0\|\Pi_j^pf''\|_\alpha\\
 &\quad+\|\big(\B(f_*,h_*)-\B_j(f_*,h_*)\big)\big[\chi_j^p w_{+\tau}^{\pi,j}[\Pi^p_jf]\big]\|_{1+\alpha}\\
 &\quad+\|\big(\B(f_*)-\B_j(f_*)\big)\big[\chi_j^p w_{-\tau}^{\pi,j}[\Pi^p_jf]\big]\|_{1+\alpha}\\
 &\quad+ \|\tau B \chi_j^p(\Pi_j^pf)'+\tau\Pi_j^p\p_f\B^\pi(f_*,h_*)[f]v_+^*-\tau\Pi_j^p\p_f\B^\pi(f_*)[f]v_-^*\|_{1+\alpha}\\
 &\quad+\Big\|\chi_j^p\Big(\frac{\tau\tr_0\p_yv_+^*}{h_*-f_*}-\frac{\tau\tr_0\p_yv_-^*}{f_*-d}-\Delta_A\Big)\Big\|_{0}\|\Pi_j^p f\|_{2+\alpha}\Big)\\
 &+K\|f\|_{2+\alpha'}.
 \end{align*}
Using the same arguments as when estimating $T_1$, we find for $p$ sufficiently large 
\[\|\chi_j^p\big( w_{-\tau}^{\pi,j}[\Pi^p_jf]-\Pi^p_j w_{-\tau}^\pi[f] \big)\|_{2+\alpha}^{\0_-}=\| z_-\|_{2+\alpha}^{\0_-}\leq (\mu/6)\|\Pi_j^p f\|_{2+\alpha}+K\|f\|_{2+\alpha'},\]
which yields, together with \eqref{T3_D1}-\eqref{T3_D1b}, that
\begin{align}
 \|T_3\|_{1+\alpha}
 \leq &(\mu/3)\|\Pi_j^p f\|_{2+\alpha}+K\|f\|_{2+\alpha'}.\label{T3_D}
\end{align}
Similar arguments show that 
\begin{align}
 \|T_2\|_{1+\alpha}
 \leq &(\mu/3)\|\Pi_j^p f\|_{2+\alpha}+K\|f\|_{2+\alpha'}.\label{T2_D}
\end{align}
Gathering \eqref{T1_D}, \eqref{T3_D}, and \eqref{T2_D}, we have established the desired estimate \eqref{DE}.
\end{proof}\bigskip

\noindent{\bf Fourier analysis: The symbol of $\bA_{\tau,j}$.} 
In this step we use Fourier analysis arguments and ODE techniques  to represent  the operators $\bA_{\tau,j}$ introduced in \eqref{HOj} as Fourier multipliers.
Subsequently we will use a Marcinkiewicz type Fourier multiplier theorem 
to prove that all these  Fourier multipliers $\bA_{\tau,j}$ are generators of strongly continuous and analytic semigroups, cf. Lemma \ref{L:2}.\medskip

We fix now  $\tau\in[0,1]$  and $p,j\in\N$ with $p\geq 3$ and $1\leq j\leq 2^{p+1}$  arbitrary.
Given $f\in h^{2+\alpha}(\s)$, we consider its Fourier expansion
\[f=\sum_{m\in\Z}f_me^{imx}\]
and look for the corresponding expansion of $\bA_{\tau,j}[f].$
In view of \eqref{HOj}, we needed to determine the  expansions for $\tr_0\nabla( w_{+\tau}^{\pi,j}[f], w_{-\tau}^{\pi,j}[f]).$
Hence, we let 
\begin{align}\label{AEW}
  ( w_{+\tau}^{\pi,j}[f], w_{-\tau}^{\pi,j}[f])=\sum_{m\in\Z}\big(A_m^+, A_m^-\big)f_m e^{imx}
\end{align}
with   functions $(A_m^+, A_m^-)=(A_m^+, A_m^-)(y)$ still to be determined.
Recalling \eqref{eqs} and the formulae in   Appendix \ref{App}, we   use the summation convention and write
\begin{align*}
 &\A^\pi_{0,j}(f_*,h_*)=c_{pl}^+\p_{pl},\qquad \A^\pi_{0,j}(f_*)=c_{pl}^-\p_{pl},\qquad \B_j(f_*,h_*)=\beta_l^+\tr_0\p_l,\qquad \B_j(f_*)=\beta_l^-\tr_0\p_l,
\end{align*} 
and we set
\begin{align*}
&a_\pm:=c_{12}^\pm/c_{22}^\pm,\quad b_\pm:=1/c_{22}^\pm,\quad D_\pm:=\sqrt{b_\pm-(a_\pm)^2}
\end{align*}
noticing that $b_\pm-(a_\pm)^2>0$.
From \eqref{AEW} and \eqref{HO1b} we deduce that for  fixed $m\in\Z,$ the pair $(A_m^+, A_m^-)$ solves the following problem 
 \begin{equation}\label{Hobit}
\left\{
\begin{array}{rllllll}
(A_m^+)''+i2ma_+(A_m^+)'-b_+m^2A_m^+\!\!&=&\!\!-\tau b_+ A_+m^2  &\text{in $ 0<y<1$},\\
(A_m^-)''+i2ma_-(A_m^-)'-b_-m^2A_m^-\!\!&=&\!\!-\tau b_-A_-m^2 &\text{in $ -1<y<0$},\\
 im \big(\beta_1^+ A_m^+(0)-\beta_1^- A_m^-(0)\big)+\beta_2^+ (A_m^+)'(0) -\beta_2^- (A_m^-)'(0)\!\!&=&\!\!i\tau m B,\\
A_m^+(0)-A_m^-(0)\!\!&=&\!\!-\big[\Delta_\rho+(1-\tau)\Delta_A\big],\\
A_m^+(1)\!\!&=&\!\!0,\\
 A_m^-(-1)\!\!&=&\!\!0,
\end{array}
\right.
\end{equation}
with constants $A_\pm, B, \Delta_\rho, \Delta_A$ defined in Theorem \ref{T1}.
We now set
\begin{align*}
 &\cos_\pm(y):=\cos(a_\pm my), && \sin_\pm(y):=\sin(a_\pm my),\\
 &\cosh_\pm(y):=\cosh(D_\pm my) ,&& \sinh_\pm(y):=\sinh(D_\pm my).
\end{align*}
With this notation, it is easy to verify that the general solutions to the first two equations of \eqref{Hobit} are given by the formula
\begin{equation}\label{BFor1}
 A_m^\pm:=u_m^\pm+i v_m^\pm
\end{equation}
with real parts 
\begin{align*}
 u_m^\pm(y)=&\xi_1^\pm\Big(\cos_\pm(y)\cosh_\pm(y)+\frac{a_\pm}{D_\pm}\sin_\pm(y)\sinh_\pm(y)\Big)+\frac{\xi_2^\pm}{D_\pm m}\cos_\pm(y)\sinh_\pm(y)\\
 &+\xi_3^\pm\Big(\sin_\pm(y)\cosh_\pm(y)-\frac{a_\pm}{D_\pm}\cos_\pm(y)\sinh_\pm(y)\Big)+\frac{\xi_4^\pm}{D_\pm m}\sin_\pm(y)\sinh_\pm(y)+\tau A_\pm
\end{align*}
and imaginary parts
\begin{align*}
 v_m^\pm(y)=&\xi_1^\pm\Big(-\sin_\pm(y)\cosh_\pm(y)+\frac{a_\pm}{D_\pm}\cos_\pm(y)\sinh_\pm(y)\Big)-\frac{\xi_2^\pm}{D_\pm m}\sin_\pm(y)\sinh_\pm(y)\\
 &+\xi_3^\pm\Big(\cos_\pm(y)\cosh_\pm(y)+\frac{a_\pm}{D_\pm}\sin_\pm(y)\sinh_\pm(y)\Big)+\frac{\xi_4^\pm}{D_\pm m}\cos_\pm(y)\sinh_\pm(y).
\end{align*}
The real constants $\{\xi_i^\pm\,:\, 1\leq i\leq 4\}$ are to be determined such that  the last four equations of \eqref{Hobit} are also satisfied by $(A_m^+, A_m^-)$.
It can be shown by explicit, but tedious computations that such constants can be uniquely determined to obtain $(A_m^+, A_m^-)$.
Since 
\[
\tr_0 \p_yw_{-\tau}^{\pi,j}[f]=\sum_{m\in\Z} (\xi_2^-+i\xi_4^-)f_me^{imx},\qquad \tr_0 \p_xw_{-\tau}^{\pi,j}[f]=\sum_{m\in\Z} im(\xi_1+\tau A_-+i\xi_3)f_me^{imx},
\]
and using the definition \eqref{HOj} together with the explicit expressions for  $\{\xi_i^\pm\,:\, 1\leq i\leq 4\}$, it follows
that $\bA_{\tau,j}$ is  the Fourier multiplier 
\begin{align*} 
 \bA_{\tau,j}\Big[\sum_m  f_me^{imx}\Big]=&\sum_m \lambda_m f_me^{imx}
 \end{align*}
with symbol $(\lambda_m)_{m\in\Z}$ defined by 
 \begin{align}
  \re\lambda_m
 :=& -  \Big(\frac{\tanh(D_+m)}{\beta_2^+D_+m}+\frac{\tanh(D_-m)}{\beta_2^-D_-m}\Big)^{-1}\Big[\Delta_\rho+\Delta_A+\frac{\tau A_-\cos(D_-m)}{\cosh(D_-m)}- \frac{\tau A_+\cos(D_+m)}{\cosh(D_+m)}\Big],\label{FMFR}\\
  \im\lambda_m:=& \frac{\tau k }{ \mu_-}\Big( \frac{ a_- \p_yv_-^*(x_j^p,0)}{a_-^2+D_-^2} +\p_xv_-^*(x_j^p,0)\Big)m\nonumber\\
  &+ \tau \Big(\frac{\tanh(D_+m)}{\beta_2^+D_+m}+\frac{\tanh(D_-m)}{\beta_2^-D_-m}\Big)^{-1} \Big[  \frac{ A_-\sin(D_-m)}{\cosh(D_-m)}+\frac{ A_+\sin(D_+m)}{\cosh(D_+m)}\Big]\nonumber\\
  &- \tau\Big(\frac{\tanh(D_+m)}{\beta_2^+D_+m}+\frac{\tanh(D_-m)}{\beta_2^-D_-m}\Big)^{-1} \frac{\tanh(D_+m)}{\beta_2^+D_+ } \big[ \beta_1^+A_+-\beta_1^-A_-- B\big].\label{FMFI}
 \end{align} 
We refrain from presenting here the detailed computations that are used to determine the constants $\{\xi_i^\pm\,:\, 1\leq i\leq 4\}$, \eqref{FMFR}, and \eqref{FMFI}, as they are quite long and only the outcome, that is,  the explicit 
formula for the symbol $(\lambda_m)_{m\in\Z}$, 
is of importance for the further analysis.
 
 We are now in the position to prove that the operators  $\bA_{\tau,j}$ are generators of strongly continuous and analytic semigroups.
To this end we will use a Marcinkiewicz type Fourier multiplier theorem \cite[Theorem 2.1]{JL12}, which generalizes a result from \cite{AB04} (see also \cite{EM09} for a similar result) and states that a Fourier multiplier
 \[\sum_{m\in\Z}f_me^{imx}\mapsto\sum_{m\in\Z}\Lambda_m f_me^{imx}\]
 belongs to $\kL(C^{r+\alpha}(\s), C^{s+\alpha}(\s))$, $r,s\in \N$ and $\alpha\in(0,1)$, provided that 
 \begin{align*}
  s_1:=\sup_{m\in\Z\setminus\{0\}}|m|^{s-r}|\Lambda_m|<\infty \qquad\text{and}\qquad s_2:=\sup_{m\in\Z\setminus\{0\}}|m|^{s-r+1}|\Lambda_{m+1}-\Lambda_m|<\infty.
 \end{align*}
Additionally, the norm of the Fourier multiplier as a bounded linear operator from $C^{r+\alpha}(\s)$ to $ C^{s+\alpha}(\s)$ can
be bounded in terms of the constants $s_1$ and $s_2$ alone. Bearing this result in mind we can derive the following lemma.

\bigskip

\begin{lemma}\label{L:2}
Let $\sigma>0$ be such that  $(f_0,h_0)\in S_\sigma$.
Then, there exist  constants  $\kappa_1\geq 1$ and   $\omega_1>0$ depending only on $\sigma,$  such that for  any    $(f_*,h_*)\in  \big(h^{3+\alpha}(\s)\big)^2\cap S_\sigma$, 
any $p$-partition of unity $\{\Pi_j^p\}_{1\leq j\leq 2^{p+1}}$ with $p\geq 3$ and any $\tau\in[0,1]$,
the operators  $\bA_{\tau,j}$,     $1\leq j\leq 2^{p+1}$,   defined by \eqref{HOj}  satisfy
 \begin{align}\label{13}
&\lambda-\bA_{\tau,j}\in{\rm Isom}(h^{2+\alpha}(\s),h^{1+\alpha}(\s)),\\[1ex]
\label{14}
& \kappa_1\|(\lambda-\bA_{\tau,j})[f]\|_{1+\alpha}\geq  |\lambda|\cdot\|f\|_{1+\alpha}+\|f\|_{2+\alpha}
\end{align}
for all $f\in h^{2+\alpha}(\s)$ and $\lambda\in\C$ with $\re \lambda\geq \omega_1$.
\end{lemma}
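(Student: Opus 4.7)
The operator $\bA_{\tau,j}$ is by construction a Fourier multiplier on $\s$ with the explicit symbol $(\lambda_m)_{m\in\Z}$ given by \eqref{FMFR}--\eqref{FMFI}, so my plan is to reduce the resolvent bound \eqref{14} to pointwise and difference estimates on the symbol and then invoke the Marcinkiewicz-type Fourier multiplier theorem of \cite[Theorem 2.1]{JL12} that is recalled just above. For $\re\lambda\geq\omega_1$ the inverse $(\lambda-\bA_{\tau,j})^{-1}$ is the Fourier multiplier with symbol $\mu_m:=(\lambda-\lambda_m)^{-1}$, and its norm as an operator from $h^{1+\alpha}(\s)$ into $h^{r+\alpha}(\s)$ (for $r=1,2$) is controlled by the Marcinkiewicz constants $\sup_{m\ne 0}|m|^{r-1}|\mu_m|$ and $\sup_{m\ne 0}|m|^{r}|\mu_{m+1}-\mu_m|$. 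All of these quantities must be bounded with constants depending only on $\sigma$.

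The first and decisive step is an asymptotic analysis of $\lambda_m$. Using $\tanh(D_\pm m)/(D_\pm m)=1/(D_\pm|m|)+O(e^{-2D_\pm|m|})$ together with the exponential decay of $\cos(D_\pm m)/\cosh(D_\pm m)$, formula \eqref{FMFR} yields
\[
\re\lambda_m = -|m|\,\frac{\beta_2^+\beta_2^-D_+D_-}{\beta_2^+D_++\beta_2^-D_-}\bigl(\Delta_\rho+\Delta_A\bigr)+O(1)\qquad\text{as $|m|\to\infty$.}
\]
Condition~(2) in the definition of $S_\sigma$ forces $\Delta_\rho+\Delta_A\geq\sigma$ at every point $x_j^p$, while conditions~(1) and~(3) together with the formulae in Appendix~\ref{App} for $a_\pm,b_\pm,D_\pm,\beta_2^\pm$ provide a strictly positive lower bound for the prefactor that depends only on $\sigma$. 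Hence there exist $c_1>0$ and $m_0\in\N$, both depending only on $\sigma$, such that $\re\lambda_m\leq -c_1|m|$ for $|m|\geq m_0$. A parallel treatment of \eqref{FMFI} delivers $|\im\lambda_m|\leq C_1(|m|+1)$ uniformly, and $(\lambda_m)_{|m|\leq m_0}$ is bounded in a $\sigma$-dependent way. Consequently the range of the symbol lies in a fixed sector around the negative real axis, so that for $\omega_1$ large enough (depending only on $\sigma$) a standard sectorial argument, splitting into the cases $|\im\lambda|\lesssim|m|$ and $|\im\lambda|\gg|m|$, gives the quantitative separation
\[
|\lambda-\lambda_m|\;\geq\;c_2\bigl(|\lambda|+|m|+1\bigr),\qquad \re\lambda\geq\omega_1,\;m\in\Z,
\]
with $c_2>0$ depending only on $\sigma$.

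For the difference symbol $\mu_{m+1}-\mu_m=(\lambda_{m+1}-\lambda_m)/[(\lambda-\lambda_{m+1})(\lambda-\lambda_m)]$ I still need $|\lambda_{m+1}-\lambda_m|\leq C_3$ uniformly; this is read off from \eqref{FMFR}--\eqref{FMFI}, where up to exponentially small corrections $\lambda_m$ extends to a $C^1$ function of $m\in\R$ with a derivative that is bounded for $|m|\geq 1$. Combining the two bounds one obtains $|m|^{r-1}|\mu_m|+|m|^{r}|\mu_{m+1}-\mu_m|\leq C|\lambda|^{r-2}$ for $r=1,2$, with $C$ depending only on $\sigma$ (the $|\lambda|^{-1}$ case additionally uses $|\lambda|\leq|\lambda-\lambda_m|+|\lambda_m|$). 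The Marcinkiewicz theorem then yields $\|(\lambda-\bA_{\tau,j})^{-1}g\|_{r+\alpha}\leq\kappa_1|\lambda|^{r-2}\|g\|_{1+\alpha}$ for $r=1,2$; upon substituting $g=(\lambda-\bA_{\tau,j})f$ this is precisely \eqref{13}--\eqref{14}.

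The principal technical obstacle, rather than the abstract route, is the uniformity of the symbol estimates over the unbounded family $(h^{3+\alpha}(\s))^2\cap S_\sigma$. Indeed, conditions~(1)--(3) only control $f_*,h_*$ in $C^2$ and $\tr_0\p_y v_\pm^*$ in $L^\infty$, whereas $\im\lambda_m$ also depends on $\tr_0\p_x v_\pm^*$ and the coefficients $a_\pm,D_\pm,\beta_i^\pm$ depend on $f_*'$ and $h_*'$. Bringing these auxiliary quantities under $\sigma$-only control requires combining the Schauder estimate \eqref{Schauder} for the diffraction problem \eqref{eq:TS1} with the interpolation identity \eqref{interpolation}; it is this bookkeeping, not any single inequality, where I expect the proof to be most delicate.
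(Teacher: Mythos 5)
Your proposal follows essentially the same route as the paper's proof: both represent $\bA_{\tau,j}$ as a Fourier multiplier with symbol $(\lambda_m)$, derive from the conditions defining $S_\sigma$ uniform lower bounds on $|\lambda-\lambda_m|$ and upper bounds on $|\lambda_{m+1}-\lambda_m|$, and then apply the Marcinkiewicz-type multiplier theorem of \cite{JL12} to convert these into the resolvent estimate \eqref{14} and the isomorphism property \eqref{13}. The paper organizes the difference estimate for the resolvent symbol into an explicit four-term decomposition governed by a single constant $C_0(\sigma)$ as in \eqref{pup} rather than via an asymptotic expansion, and the uniformity bookkeeping you flag at the end is likewise simply subsumed there into the assertion that $C_0$ can be chosen to depend only on $\sigma$.
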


\begin{proof} 
 We write 
  \begin{align*}
  \re\lambda_m=-\mu_m-\nu_m,
 \end{align*}
 where
 \begin{align*}
\nu_m&:=    \tau\Big(\frac{\tanh(D_+m)}{\beta_2^+D_+m}+\frac{\tanh(D_-m)}{\beta_2^-D_-m}\Big)^{-1}\Big[\frac{ A_-\cos(D_-m)}{\cosh(D_-m)}- \frac{A_+\cos(D_+m)}{\cosh(D_+m)}\Big],\\
\mu_m&:=    \Big(\frac{\tanh(D_+m)}{\beta_2^+D_+m}+\frac{\tanh(D_-m)}{\beta_2^-D_-m}\Big)^{-1}\big(\Delta_\rho+\Delta_A\big).
 \end{align*}
Recalling the definition of $S_\sigma$, there exists a   constant  $C_0=C_0(\sigma)>1$ such that 
 \begin{equation}\label{plus}
C_0^{-1}\leq\Delta_\rho+\Delta_A\leq C_0.
\end{equation}
 In fact, it is not difficult to see that we can choose $C_0$ large enough to guarantee that
 \begin{equation}\label{pup}
 \begin{aligned}
  &C_0^{-1}\leq\frac{\mu_m}{|m|}\leq C_0,\quad m\in\Z\setminus\{0\},\, \tau\in[0,1]\,,\\
  &\sup_{m\in\Z\setminus\{0\}}\sup_{\tau\in[0,1]} \Big(|\nu_m|+\frac{|\re\lambda_m|+|\im\lambda_m|}{|m|}\Big)\leq C_0,\\
 & \sup_{m\in\Z\setminus\{0\}}\sup_{\tau\in[0,1]}\Big(|\re\lambda_{m+1}-\re\lambda_m|+|\im\lambda_{m+1}-\im\lambda_m|\Big)\leq C_0.
 \end{aligned}
 \end{equation}
 
Let now $\lambda=\zeta_1+i\zeta_2\in\C $  be given such that $\re\lambda=\zeta_1\geq\omega_1:= 2C_0.$
Since $\zeta_1-\re\lambda_m\geq C_0$, the (formal) inverse  $(\lambda-\bA_{\tau,j})^{-1}$  of $\lambda-\bA_{\tau,j}$ is the Fourier multiplier
 \begin{equation*}
 \sum_{m\in\Z}f_me^{imx}\mapsto\sum_{m\in\Z} \Lambda_m f_me^{imx}
\end{equation*}
  with symbol
 \[
 \Lambda_m:= (\lambda-\lambda_{m})^{-1}=\frac{\zeta_1-\re\lambda_m-i(\zeta_2-\im\lambda_m)}{(\zeta_1-\re\lambda_m)^2+(\zeta_2-\im\lambda_m)^2}.
 \]
\noindent{\bf Step 1.}  We first prove that $(\lambda-\bA_{\tau,j})^{-1}$ belongs to $\kL(C^{1+\alpha}(\s), C^{2+\alpha}(\s))$, the norm being independent of $p\geq 3, j\in\{1,\ldots, 2^{p+1}\}, \tau\in[0,1]$, and $\lambda\in\C$ with $ \re\lambda\geq\omega_1.$
More precisely, we show that\footnote{The operator $\bA_{\tau,j}$  depends 
on the partition $\{\Pi_j^p\}_{1\leq j\leq 2^{p+1}}$ through the middle point $x_j^p$ of the interval  $I_j^p$ only, and therefore $\bA_{\tau,j}$ makes sense when replacing $x_j^p$ by any $x\in\s$.
In view of this fact, when taking the supremum over ${x_j^p\in\s}$ in \eqref{cond1} we consider a larger set  than when taking the supremum over all middle point $x_j^p$ and all $p$-partitions. }
 \begin{align}\label{cond1}
\sup_{\zeta_1\geq \omega_1}\sup_{x_j^p\in\s} \sup_{\tau\in[0,1]}\sup_{m\in\Z^*}\big(|m\Lambda_m|+|m|^2|\Lambda_{m+1}-\Lambda_{m}|\big)<\infty.
 \end{align}
Note that \eqref{pup} implies
\begin{align}\label{EEE1}
\begin{aligned}
&|\Lambda_0|\leq C_0,\\
&|m|^2 |\Lambda_m|^2=&\frac{m^2}{(\zeta_1-\re\lambda_m)^2+(\zeta_2-\im\lambda_m)^2}\leq\frac{m^2}{(\zeta_1-\re\lambda_m)^2}\leq\frac{C_0^2 m^2}{m^2+C_0^4}\leq C_0^2.
\end{aligned}
\end{align}
Setting $R_m:=\zeta_1-\re\lambda_m$ and $I_m:=\zeta_2-\im\lambda_m,$ we have $R_m>0$ and
\begin{align*}
 |\Lambda_{m+1}-\Lambda_{m}|\leq T_1+T_2+T_3+T_4,
\end{align*}
where
\begin{align*}
 T_1:=\frac{R_mR_{m+1}|R_{m+1}-R_m|}{(R_m^2+I_m^2)(R_{m+1}^2+I_{m+1}^2)},\qquad T_2:=\frac{ |R_mI_{m+1}^2-R_{m+1}I_m^2|}{(R_m^2+I_m^2)(R_{m+1}^2+I_{m+1}^2)},\\
  T_3:=\frac{|I_mI_{m+1}||I_{m+1}-I_m|}{(R_m^2+I_m^2)(R_{m+1}^2+I_{m+1}^2)},\qquad T_4:=\frac{ |I_mR_{m+1}^2-I_{m+1}R_m^2|}{(R_m^2+I_m^2)(R_{m+1}^2+I_{m+1}^2)}.
\end{align*}
The estimates for $m^2T_3$ and $m^2T_4$ are similar to those for $m^2T_1$ and $m^2T_2$, and therefore we shall present only  those for the latter.
Recalling \eqref{pup} and \eqref{EEE1},  we get
\begin{align*}
 m^2T_1\leq m^2|\Lambda_m|\cdot|\Lambda_{m+1}|\cdot|R_{m+1}-R_m|\leq 2C_0^3
\end{align*}
and
\begin{align*}
 m^2 T_2\leq&m^2\frac{|R_m|(|I_{m+1}|+|I_m|)|I_{m+1}-I_m|+|I_m^2||R_{m+1}-R_m|}{(R_m^2+I_m^2)(R_{m+1}^2+I_{m+1}^2)}\leq  C_0m^2|\Lambda_{m+1}|\big( |\Lambda_m|+2|\Lambda_{m+1}|\big)\\
 \leq& 10 \, C_0^3.
\end{align*}
Proceeding similarly with  $m^2T_3$ and $m^2T_4$, we arrive at \eqref{cond1}. In view of \cite[Theorem 2.1]{JL12} we additionally know that  $(\lambda-\bA_{\tau,j})^{-1}\in\kL\big(C^{n+\alpha}(\s), C^{n+1+\alpha}(\s)\big)$ for all $n\in\N$, and since the closure of $C^{n+1+\alpha}(\s)$ in 
$C^{n+\alpha}(\s)$ is exactly  $h^{n+\alpha}(\s)$, a density argument leads us to the desired property \eqref{13}.
Moreover, our arguments  show  there exists a constant $\kappa_1$ depending only on $C_0$ such that   
 \begin{align}\label{15}
& \kappa_1\|(\lambda-\bA_{\tau,j})[f]\|_{1+\alpha}\geq \|f\|_{2+\alpha}\qquad\text{for all $f\in h^{2+\alpha}(\s)$ and   $\re \lambda\geq \omega_1$.}
\end{align}
\noindent{\bf Step 2.}
We are left to prove that we can choose $\kappa_1$  large enough to guarantee that 
 \begin{align*}
& \kappa_1\|(\lambda-\bA_{\tau,j})[f]\|_{1+\alpha}\geq  |\lambda|\cdot\|f\|_{1+\alpha} \qquad\text{for all $f\in h^{2+\alpha}(\s)$ and   $\re \lambda\geq \omega_1$.}
\end{align*}
For this it suffices to show that   
\begin{align}\label{cond2}
\sup_{\zeta_1\geq \omega_1}\sup_{x_j^p\in\s} \sup_{\tau\in[0,1]}\sup_{m\in\Z^*}|\lambda|\big(|\Lambda_m|+|m|\cdot|\Lambda_{m+1}-\Lambda_{m}|\big)<\infty.
 \end{align}
Using \eqref{pup}, we  first note that 
\begin{align*}
\frac{\zeta_1^2}{(\zeta_1-\re\lambda_m)^2+(\zeta_2-\im\lambda_m)^2}\leq \frac{\zeta_1^2}{(\zeta_1-\re\lambda_m)^2}\leq \frac{\zeta_1^2}{(\zeta_1/2)^2}=4.
\end{align*}
Additionally, \eqref{pup} and \eqref{EEE1} lead us to
\begin{align*}
\frac{\zeta_2^2}{(\zeta_1-\re\lambda_m)^2+(\zeta_2-\im\lambda_m)^2}\leq& \frac{2(\zeta_2-\im\lambda_m)^2+2(\im\lambda_m)^2}{(\zeta_1-\re\lambda_m)^2+(\zeta_2-\im\lambda_m)^2}\leq 2\Big[1+\frac{|\im\lambda_m|^2}{m^2}\big(m^2|\Lambda_m|^2\big)\Big]\\
\leq&2(1+C_0^4),
\end{align*}
and we thus have established that 
\begin{align}\label{EEE4}
\sup_{\zeta_1\geq \omega_1}\sup_{x_j^p\in\s} \sup_{\tau\in[0,1]}\sup_{m\in\Z^*}|\lambda|\cdot|\Lambda_m|\leq 2(2+C_0^2).
 \end{align}
To finish the proof, we recall that $|\Lambda_{m+1}-\Lambda_{m}|\leq T_1+T_2+T_3+T_4$.
Combining \eqref{pup}, \eqref{EEE1}, and \eqref{EEE4}, we find that 
\begin{align*}
 |\lambda m|T_1\leq |\lambda m|\cdot|\Lambda_m|\cdot|\Lambda_{m+1}|\cdot|R_{m+1}-R_m|\leq 4C_0^2(2+C_0^2)
\end{align*}
and 
\begin{align*}
 |\lambda m|T_2\leq& C_0|\lambda m|\cdot |\Lambda_{m+1}|\big( |\Lambda_m|+2|\Lambda_{m+1}|\big)\leq 10C_0^2(2+C_0^2).
\end{align*}
Proceeding similarly with the remaining terms $|\lambda m|T_3$ and $|\lambda m|T_4$, we obtain \eqref{cond2}.
Together with \eqref{15} we deduce \eqref{14}, and the proof is completed.
\end{proof}

With these preliminary results, we can now  prove that the assumptions of Lemma \ref{L:Ad1} hold. In this way we establish  \eqref{Claim1}.

\begin{thm}\label{T:c1}
Let $(f_0,h_0)) \in\V$ and $b_0\in h^{2+\alpha}(\s)$ be given such that the first inequality in \eqref{CT1}  is satisfied. Then 
 \begin{align*}
 -\p_{f}\Phi_1(0,(f_0,h_0))\in\kH\big(h^{2+\alpha}(\s), h^{1+\alpha}(\s)\big).
\end{align*}
\end{thm}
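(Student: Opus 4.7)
The strategy is to chain together the reductions already established: first reduce via Lemma \ref{L:LOP1} from the full Fr\'echet derivative $\p_f\Phi_1(0,(f_0,h_0))$ to the principal part $\p_f\Phi_1^\pi(f_0,h_0)$; then reduce via Lemma \ref{L:Ad1} from $(f_0,h_0)$ to more regular base points $(f_*,h_*)\in (h^{3+\alpha}(\s))^2\cap S_\sigma\cap \mathbb{B}_{(h^{2+\alpha}(\s))^2}((f_0,h_0),\sigma)$ on which the localization is available; and finally establish for such $(f_*,h_*)$ a uniform resolvent estimate for $\p_f\Phi_1^{\pi*}=\p_f\Phi_{1,1}^{\pi*}$ by combining the localization Theorem \ref{T1}, the Fourier multiplier bound Lemma \ref{L:2}, and a continuity argument in the homotopy parameter $\tau\in[0,1]$ that connects $\p_f\Phi_{1,1}^{\pi*}$ to the Dirichlet--Neumann operator $\p_f\Phi_{1,0}^{\pi*}$.

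Fix $\sigma>0$ with $(f_0,h_0)\in S_\sigma$ and pick an arbitrary $(f_*,h_*)\in (h^{3+\alpha}(\s))^2\cap S_\sigma\cap \mathbb{B}_{(h^{2+\alpha}(\s))^2}((f_0,h_0),\sigma)$. The heart of the argument is the a priori resolvent estimate
\[
\widetilde\kappa_1\,\|(\lambda-\p_f\Phi_{1,\tau}^{\pi*})f\|_{1+\alpha}\;\geq\;|\lambda|\,\|f\|_{1+\alpha}+\|f\|_{2+\alpha},\qquad f\in h^{2+\alpha}(\s),\ \re\lambda\geq\widetilde\omega_1,
\]
uniform in $\tau\in[0,1]$, with $\widetilde\kappa_1$ depending only on $\sigma$. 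To prove it, fix a $p$-partition $\{\Pi_j^p\}$ as in Theorem \ref{T1} chosen with the perturbation parameter $\mu:=1/(2\kappa_1)$, where $\kappa_1$ is the uniform constant from Lemma \ref{L:2}. For each $j$ write
\[
\Pi_j^p(\lambda-\p_f\Phi_{1,\tau}^{\pi*})f=(\lambda-\bA_{\tau,j})(\Pi_j^p f)+R_{\tau,j}f,
\]
where, by Theorem \ref{T1}, $\|R_{\tau,j}f\|_{1+\alpha}\le \mu\|\Pi_j^p f\|_{2+\alpha}+K_2\|f\|_{2+\alpha'}$. Apply \eqref{14} of Lemma \ref{L:2} to the first summand and absorb the $\mu\|\Pi_j^p f\|_{2+\alpha}$ term into the left-hand side; this yields
\[
2\kappa_1\,\|\Pi_j^p(\lambda-\p_f\Phi_{1,\tau}^{\pi*})f\|_{1+\alpha}\ge |\lambda|\|\Pi_j^p f\|_{1+\alpha}+\|\Pi_j^p f\|_{2+\alpha}-C K_2\|f\|_{2+\alpha'}.
\]
Taking the maximum over $j$ and invoking the norm equivalence of Remark \ref{R:1} produces the a priori estimate modulo the lower order remainder $\|f\|_{2+\alpha'}$, which is absorbed in a standard way by choosing $\widetilde\omega_1$ large enough and using the interpolation \eqref{interpolation}.

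With the a priori estimate in hand, it remains to verify surjectivity of $\lambda-\p_f\Phi_{1,\tau}^{\pi*}$ for one value of $\tau$ and to apply the method of continuity. At $\tau=0$, the operator $\p_f\Phi_{1,0}^{\pi*}$ is (minus) a Dirichlet--Neumann type operator associated with a coupled elliptic diffraction problem, whose invertibility for large $\re\lambda$ follows from Corollary \ref{C:1} and the solvability/regularity guaranteed by the regularized version of Theorem \ref{DP}; this is precisely the generator statement alluded to in Proposition \ref{L:DN1}. Combining this with the uniform a priori estimate along the homotopy $\tau\mapsto \p_f\Phi_{1,\tau}^{\pi*}$ gives, by the method of continuity (cf.\ \cite{GT01}), that $\lambda-\p_f\Phi_{1,1}^{\pi*}$ is an isomorphism for all $\re\lambda\ge\widetilde\omega_1$. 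Hence $-\p_f\Phi_1^\pi(f_*,h_*)\in\kH(h^{2+\alpha}(\s),h^{1+\alpha}(\s),\widetilde\kappa_1,\widetilde\omega_1)$ with $\widetilde\kappa_1$ depending only on $\sigma$.

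Lemma \ref{L:Ad1} then promotes this to $-\p_f\Phi_1^\pi(f_0,h_0)\in\kH(h^{2+\alpha}(\s),h^{1+\alpha}(\s))$, and the perturbation estimate \eqref{Claim4} of Lemma \ref{L:LOP1} together with the perturbation result \cite[Theorem I.1.3.1(ii)]{Am95} yields the desired conclusion for $\p_f\Phi_1(0,(f_0,h_0))$. The main obstacle in this plan is the uniform (in $\tau$) nature of the constants in Lemma \ref{L:2}, which is already carefully arranged in the excerpt; the only subtlety in the continuity step is to ensure that $\widetilde\omega_1$ can be chosen large enough (possibly depending on $\|(f_*,h_*)\|_{3+\alpha}$, as flagged in Remark \ref{R:ad}) so that both the a priori estimate and the invertibility at $\tau=0$ hold simultaneously, which is not problematic thanks to Lemma \ref{L:Ad1}.
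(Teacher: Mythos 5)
Your proposal follows the paper's proof essentially step for step: reduce to the principal part via Lemma \ref{L:LOP1} and \cite[Theorem I.1.3.1(ii)]{Am95}, reduce to smooth base points via Lemma \ref{L:Ad1}, establish the uniform $\tau$-independent resolvent estimate by combining the localization estimate of Theorem \ref{T1} (with $\mu=1/(2\kappa_1)$), the Fourier multiplier bound of Lemma \ref{L:2}, Remark \ref{R:1}, and interpolation, and finally obtain surjectivity at $\tau=1$ via the method of continuity anchored at the Dirichlet--Neumann operator $\p_f\Phi_{1,0}^{\pi*}$. The one imprecision worth flagging is your remark that the $\tau=0$ surjectivity ``follows from Corollary \ref{C:1} and the solvability/regularity of Theorem \ref{DP}'': in fact Proposition \ref{L:DN1} needs a genuine extra argument (a Fredholm-index-zero observation from the compactness of the lower-order coupling plus injectivity via Hopf's lemma at both interfaces), not a direct application of Corollary \ref{C:1}; since you do cite Proposition \ref{L:DN1} as the key input, the logical structure is nonetheless the same as the paper's.
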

\begin{proof} 
Recalling \eqref{Claim4} (with $(f_*,h_*)=(f_0,h_0)$) and the perturbation result \cite[Theorem I.1.3.1 (ii)]{Am95}, we are left  to show that $-\p_f\Phi^\pi_1(f_0,h_0)\in\kH\big(h^{2+\alpha}(\s), h^{1+\alpha}(\s)\big).$ 
To this end, it suffices to verify the assumptions of Lemma \ref{L:Ad1} for some $\sigma>0$.

 Let $\sigma>0$ be such that  $(f_0,h_0)\in S_\sigma$,  let $\kappa_1$ and $\omega_1$ be the constants found in Lemma \ref{L:2}, and pick  $\alpha'\in(0,\alpha)$.
 Given $(f_*,h_*)\in  \big(h^{3+\alpha}(\s)\big)^2\cap S_\sigma$, Theorem \ref{T1} implies the existence of a  $p$-partition of unity  $\{\Pi_j^p\}_{1\leq j\leq 2^{p+1}}$  and of a constant $K_2$ such that  
 \begin{equation}\label{DEa}
  \|\Pi_j^p\p_f\Phi^{\pi*}_{1,\tau}[f]-\bA_{\tau,j}[\Pi^p_j f]\|_{1+\alpha}\leq  \frac{1}{2\kappa_1} \|\Pi_j^pf\|_{2+\alpha}+K_2\|f\|_{2+\alpha'}
 \end{equation}
 for all $f\in h^{2+\alpha}(\s)$, $\tau\in[0,1],$ and $j\in\{1,\ldots, 2^{p+1}\}$.
In view of \eqref{14}, we get
\begin{align}\label{DEb}
& \kappa_1\|(\lambda-\bA_{\tau,j})[\Pi_j^pf]\|_{1+\alpha}\geq  |\lambda|\cdot\|\Pi_j^pf\|_{1+\alpha}+\|\Pi_j^pf\|_{2+\alpha}
\end{align}
for all $\lambda\in\C$ with $\re \lambda \geq\omega_1$, $\tau\in[0,1]$, $f\in h^{2+\alpha}(\s)$, and $j\in\{1,\ldots, 2^{p+1}\}.$
Combining \eqref{DEa} and \eqref{DEb} gives
\begin{align*}
 \kappa_1\|\Pi_j^p(\lambda-\p_f\Phi^{\pi*}_{1,\tau})[f]\|_{1+\alpha}\geq& \kappa_1\|\lambda\Pi_j^pf-\bA_{\tau,j}[\Pi^p_j f]\|_{1+\alpha}-\kappa_1\|\bA_{\tau,j}[\Pi^p_j f]-\Pi_j^p\p_f\Phi^{\pi*}_{1,\tau}[f]\|_{1+\alpha}\\
 \geq&   |\lambda|\cdot\|\Pi_j^pf\|_{1+\alpha}+\|\Pi_j^pf\|_{2+\alpha}-\frac{1}{2} \|\Pi_j^pf\|_{2+\alpha}-\kappa_1K_2\|f\|_{2+\alpha'},
\end{align*}
hence
\begin{align*}
 \kappa_1K_2\|f\|_{2+\alpha'}+\kappa_1\|\Pi_j^p(\lambda-\p_f\Phi^{\pi*}_{1,\tau})[f]\|_{1+\alpha}\geq&   |\lambda|\cdot\|\Pi_j^pf\|_{1+\alpha}+\frac{1}{2}\|\Pi_j^pf\|_{2+\alpha} 
\end{align*}
for all $\re \lambda \geq\omega_1$, $f\in h^{2+\alpha}(\s)$, $\tau\in[0,1]$,  and $j\in\{1,\ldots, 2^{p+1}\}.$
Together with Remark \ref{R:1}, we conclude there exists $\kappa_1'>0$ such that 
\begin{align*}
 \kappa_1' K_2\|f\|_{2+\alpha'}+\kappa_1\|(\lambda-\p_f\Phi^\pi_{1,\tau})[f]\|_{1+\alpha}\geq&   |\lambda|\cdot\|f\|_{1+\alpha}+\frac{1}{2}\|f\|_{2+\alpha} 
\end{align*}
for all $\re \lambda \geq\omega_1$, $f\in h^{2+\alpha}(\s)$, and $\tau\in[0,1]$.
Using  the interpolation property \eqref{interpolation}  and Young's inequality, we find  a constant $\wt \omega_1>0$ depending on $K_2$ such that 
\begin{align}\label{FE1}
 4\kappa_1'\|(\lambda-\p_f\Phi^{\pi*}_{1,\tau})[f]\|_{1+\alpha}\geq&   |\lambda|\cdot\|f\|_{1+\alpha}+\|f\|_{2+\alpha} 
\end{align}
for all $\lambda\in\C$ with $\re \lambda \geq\wt\omega_1$,  $f\in h^{2+\alpha}(\s)$, and $\tau\in[0,1]$.
Additionally, choosing $\sigma$ sufficiently small we obtain from the definition of  $S_\sigma$ that
\[
\wt\kappa_1:=\max\Big\{4\kappa_1', \sup\big\{\|\p_f\Phi^{\pi*}_{1,1})\|_{\kL\big(h^{2+\alpha}(\s), h^{1+\alpha}(\s)\big)}\,:\, (f_*,h_*)\in S_\sigma\cap \mathbb{B}_{(h^{2+\alpha}(\s))^2}((f_0,h_0),\sigma)\big\}\Big\}<\infty.
\]
Setting  $\tau=1$, we conclude that  if $(\wt\omega_1-\p_f\Phi^{\pi*}_{1,1})=(\wt\omega_1-\p_f\Phi^{\pi}_{1}(f_*,h_*))\in{\rm Isom}(h^{2+\alpha}(\s),h^{1+\alpha}(\s)),$ 
then 
$$-\p_f\Phi^{\pi}_{1}(f_*,h_*) \in \kH\big(h^{2+\alpha}(\s), h^{1+\alpha}(\s),\wt \kappa_1,\wt \omega_1\big),$$
 where $\wt\kappa_1$ is independent of $(f_*,h_*)\in  \big(h^{3+\alpha}(\s)\big)^2\cap S_\sigma \cap \mathbb{B}_{(h^{2+\alpha}(\s))^2}((f_0,h_0),\sigma)$. 
 
 Hence, the assumptions of Lemma \ref{L:Ad1}  hold true for sufficiently  small $\sigma$ if we can show that   $(\wt\omega_1-\p_f\Phi^{\pi*}_{1,1})\in{\rm Isom}(h^{2+\alpha}(\s),h^{1+\alpha}(\s)).$
Using  the  method of continuity, cf. e.g. \cite[Theorem 5.2]{GT01}, and \eqref{FE1} it follows that $(\lambda-\p_f\Phi^{\pi*}_{1,1})$ is onto for $\re\lambda\geq\wt\omega_1$, provided that $(\lambda-\p_f\Phi^{\pi*}_{1,0})$ is onto.
But the surjectivity of   $(\lambda-\p_f\Phi^{\pi*}_{1,0})$ for positive $\lambda$  follows  as in the proof of Proposition \ref{L:DN1} below, and we thus have verified the assumptions of Lemma \ref{L:Ad1}.
\end{proof}

We present now  a generation result for the Dirichlet-Neumann  operator $\p_f\Phi^{\pi*}_{1,0}$ in a slightly more general context. 
\begin{prop}\label{L:DN1}
 Let $(f_*,h_*) \in\V$  be given and $a\in h^{2+\alpha}(\s)$ be a positive function. The linear operator 
  \begin{equation*}
 \bA[f]:=   \B(f_*) w_{-0}^\pi[f],\quad f\in h^{2+\alpha}(\s),
 \end{equation*}
 with  $(w_{+0}^\pi[f],w_{-0}^\pi[f])$  denoting the solution to 
  \begin{equation*} 
\left\{
\begin{array}{rllllll}
\A^\pi_0(f_*,h_*) w_{+0}^\pi[f]\!\!&=&\!\!0 &\text{in $ \Omega_+$},\\
\A^\pi_0(f_*) w_{-0}^\pi[f]\!\!&=&\!\!0&\text{in $ \Omega_-$},\\
 \B(f_*,h_*)w_{+0}^\pi[f]-\B(f_*)w_{-0}^\pi[f]\!\!&=&\!\!0&\text{on $ \Gamma_{0}$},\\
w_{+0}^\pi[f]-w_{-0}^\pi[f]\!\!&=&\!\!af&\text{on $\Gamma_0$},\\
w_{+0}^\pi[f]\!\!&=&\!\!0&\text{on $ \Gamma_1$},\\
 w_{-0}^\pi[f]\!\!&=&\!\!0&\text{on $\G_{-1}$}.
\end{array}
\right.
\end{equation*}
satisfies
$$
 -\bA\in\kH\big(h^{2+\alpha}(\s), h^{1+\alpha}(\s)\big).
$$
\end{prop}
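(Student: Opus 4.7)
The plan is to follow the three-step architecture used to establish Theorem~\ref{T:c1}, specialized to the case $\tau=0$ but with the general positive boundary weight $a$ replacing $\Delta_\rho+\Delta_A$: localize $\bA$ via a partition of unity, approximate each local piece by a Fourier multiplier, and conclude via a Marcinkiewicz-type multiplier theorem together with uniform resolvent estimates.

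First, I would transfer the localization argument of Theorem~\ref{T1} verbatim to $\bA$. For a $p$-partition of unity $\{\Pi_j^p\}$, introduce the frozen-coefficient operators $\bA_j$ obtained by evaluating all coefficients of $\A_0^\pi(f_*)$, $\A_0^\pi(f_*,h_*)$, $\B(f_*)$, $\B(f_*,h_*)$, together with $a$, at the midpoint $x_j^p$. The expected estimate is
\[
\|\Pi_j^p\bA[f]-\bA_j[\Pi_j^p f]\|_{1+\alpha}\leq\mu\|\Pi_j^p f\|_{2+\alpha}+K\|f\|_{2+\alpha'},\qquad f\in h^{2+\alpha}(\s),
\]
for any preassigned $\mu>0$ and $\alpha'\in(0,\alpha)$, with $p$ chosen large enough. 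The proof is a direct adaptation of the one of Theorem~\ref{T1}, relying on the Schauder estimate \eqref{Schauder} for diffraction problems; indeed the setting here is actually simpler because the elliptic equations in \eqref{HO1b} have zero right-hand side when $\tau=0$.

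Second, I would compute the symbol of $\bA_j$ by Fourier expansion. Writing $f=\sum_m f_m e^{imx}$ and $(w_+,w_-)=\sum_m(A_m^+,A_m^-)f_m e^{imx}$, the coefficients $(A_m^+,A_m^-)$ solve the ODE system analogous to \eqref{Hobit} with $\tau=0$, the first two equations being homogeneous and the jump constant being $a(x_j^p)$. Solving exactly as in \eqref{BFor1}--\eqref{FMFI} and evaluating $\B(f_*)w_-=\bA_j[f]$ yields the purely real symbol
\[
\lambda_m^j=a(x_j^p)\left(\frac{\tanh(D_+m)}{\beta_2^+D_+m}+\frac{\tanh(D_-m)}{\beta_2^-D_-m}\right)^{-1},\qquad m\in\Z\setminus\{0\},
\]
and $\lambda_0^j=0$. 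Because $a$ is strictly positive and $D_\pm,\beta_2^\pm$ are positive, $\lambda_m^j>0$ and $\lambda_m^j\asymp|m|$ uniformly in $j$ and $p$.

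Third, I would feed this symbol into \cite[Theorem 2.1]{JL12}, verbatim as in Lemma~\ref{L:2}. For $\re\lambda\geq\omega_1$ with $\omega_1$ large enough, the inverse symbol $(\lambda+\lambda_m^j)^{-1}$ satisfies the Marcinkiewicz bounds
\[
\sup_{m\in\Z^*}\bigl(|m|\cdot|(\lambda+\lambda_m^j)^{-1}|+|m|^2\cdot|(\lambda+\lambda_{m+1}^j)^{-1}-(\lambda+\lambda_m^j)^{-1}|\bigr)\leq C,
\]
together with the analogous bound multiplied by $|\lambda|$; these checks are strictly easier than those in Lemma~\ref{L:2} because $\lambda_m^j$ is real and of linear growth. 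This yields invertibility of $\lambda+\bA_j$ on $h^{2+\alpha}(\s)\to h^{1+\alpha}(\s)$ and a uniform estimate
\[
\kappa\|(\lambda+\bA_j)f\|_{1+\alpha}\geq|\lambda|\cdot\|f\|_{1+\alpha}+\|f\|_{2+\alpha}
\]
with $\kappa,\omega_1$ independent of $j,p$. Combining this with the localization estimate exactly as in the proof of Theorem~\ref{T:c1} (via Remark~\ref{R:1} and interpolation \eqref{interpolation}), and noting that here no homotopy is needed because the frozen operators are directly invertible, I conclude $-\bA\in\kH(h^{2+\alpha}(\s),h^{1+\alpha}(\s))$. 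The main technical obstacle is the uniformity of the multiplier constants with respect to $x_j^p$ and $a$; this is handled by exploiting that the coefficients of the frozen operators range over a compact set determined by the fixed pair $(f_*,h_*)\in\V$ and that $a$ is bounded away from $0$ by continuity and positivity.
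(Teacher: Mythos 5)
The localization, frozen-coefficient, and Fourier-multiplier part of your plan is sound and essentially reproduces what the paper leaves to a ``short inspection'' of Theorem~\ref{T1}, Lemma~\ref{L:2}, and Theorem~\ref{T:c1}: it produces the a priori estimate
\[
\kappa\,\|(\lambda-\bA)f\|_{1+\alpha}\ \geq\ |\lambda|\cdot\|f\|_{1+\alpha}+\|f\|_{2+\alpha},\qquad \re\lambda\geq\omega ,
\]
hence injectivity of $\lambda-\bA$ and closedness of its range. Two sign slips, though: the symbol of $\bA_j$ is negative, namely $-a(x_j^p)\bigl(\tanh(D_+m)/(\beta_2^+D_+m)+\tanh(D_-m)/(\beta_2^-D_-m)\bigr)^{-1}$ (cf.~\eqref{FMFR}; one readily checks $\B(f_*)w_{-0}^\pi[e^{imx}]$ has a negative coefficient when the jump datum is $+af$), and for $-\bA\in\kH$ the relevant resolvent is $\lambda-\bA$, not $\lambda+\bA_j$. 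Your two sign errors cancel in the estimate but make the final inference read as if you were proving $\bA\in\kH$ rather than $-\bA\in\kH$.

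The substantive gap is surjectivity. The a priori estimate by itself does not make $\lambda-\bA$ onto, and the invertibility of the localized Fourier multipliers $\bA_j$ does not transfer to the nonlocal operator $\bA$: note that in the proof of Theorem~\ref{T:c1} the frozen operators $\bA_{\tau,j}$ are already invertible for \emph{every} $\tau\in[0,1]$, and the homotopy in $\tau$ is used precisely because this is not enough --- one needs surjectivity of the \emph{full} operator at one endpoint, and the paper supplies that endpoint surjectivity by invoking Proposition~\ref{L:DN1}. Thus ``no homotopy is needed because the frozen operators are directly invertible'' is a non sequitur and, in this context, circular: the surjectivity of the $\tau=0$ operator is exactly what Proposition~\ref{L:DN1} must establish and cannot be extracted from the localization machinery alone. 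The paper closes the gap with a separate, elementary construction: for $\lambda>0$ and $F\in h^{1+\alpha}(\s)$, it solves the auxiliary diffraction problem~\eqref{DP1} with the Robin-type boundary condition $-\lambda a^{-1}(z_+-z_-)+\B(f_*)z_-=F$ on $\Gamma_0$, reads off $f$ from $a^{-1}\tr_0(z_+-z_-)$, and verifies that $f$ is a preimage of $F$; the unique solvability of~\eqref{DP1} is established by showing the associated operator~\eqref{opa} is a compact perturbation of the (decoupled, clearly invertible) $\lambda=0$ operator, hence Fredholm of index zero, and is injective by the maximum principle and Hopf's lemma. Your proposal omits this construction entirely, so the argument does not close.
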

\begin{proof} A short inspection of the proof of Theorem \ref{T1}, Lemma \ref{L:2}, and Theorem \ref{T:c1} reveals that 
there exist constants $\kappa\geq1$ and $\omega>0$ such that 
\begin{align*}\label{FE1}
 \kappa\|(\lambda-\bA)[f]\|_{1+\alpha}\geq&   |\lambda|\cdot\|f\|_{1+\alpha}+\|f\|_{2+\alpha} 
\end{align*}
for all $\lambda\in\C$ with $\re \lambda \geq\wt\omega$ and all  $f\in h^{2+\alpha}(\s)$.
 To finish the proof, it suffices to prove that  for all positive  $\lambda,$ the operator $(\lambda-\bA)\in\kL\big(h^{2+\alpha}(\s), h^{1+\alpha}(\s)\big)$ is onto.
 Let thus $\lambda>0$  and $F\in h^{1+\alpha}(\s)$ be given, denote by $(z_+,z_-)\in h^{2+\alpha}(\0_+)\times h^{2+\alpha}(\0_-)$ the solution to the diffraction problem
 \begin{equation} \label{DP1}
\left\{
\begin{array}{rllllll}
\A^\pi_0(f_*,h_*) z_+\!\!&=&\!\!0&\text{in $\Omega_+,$}\\
\A^\pi_0(f_*) z_-\!\!&=&\!\!0&\text{in $\Omega_-$},\\
 \B(f_*,h_*)z_+-\B(f_*)z_-\!\!&=&\!\!0&\text{on $ \Gamma_{0},$}\\
-\lambda a^{-1}(z_+-z_-)+\B(f_*)z_-\!\!&=&\!\! F&\text{on $\Gamma_0,$}\\
z_+\!\!&=&\!\!0&\text{on $ \Gamma_1,$}\\
 z_-\!\!&=&\!\!0&\text{on $\G_{-1},$}
\end{array}
\right.
\end{equation}
and set $f:=-a^{-1}\tr_0 (z_+-z_-) \in h^{2+\alpha}(\s).$
It is easy to see that $(w_{+0}^{\pi},w_{-0}^{\pi})[f]=(z_+,z_-).$
Therefore,
\begin{align*}
 (\lambda-\bA)[f]=\lambda f+\B(f_*)w_{-0}^{\pi}[f]=-\lambda a^{-1}(z_+-z_-)+\B(f_*)z_-=F.
\end{align*}
In the remaining part of the proof, we establish that for each $F\in h^{1+\alpha}(\s)$  and $\lambda>0$, 
the diffraction problem \eqref{DP1} possesses a unique solution  $(z_+,z_-)\in h^{2+\alpha}(\0_+)\times h^{2+\alpha}(\0_-).$ 
In fact, due to the arguments in the proof of Theorem \ref{DP}, it suffices to prove 
that the mapping 
\begin{equation}\label{opa}
(z_+,z_-)\mapsto
\begin{pmatrix} 
 \A^\pi_0(f_*,h_*) z_+\\
\A^\pi_0(f_*) z_-\\
 \B(f_*,h_*)z_+-\B(f_*)z_-\\
-\lambda a^{-1}\tr_0(z_+-z_-)+\B (f_*)z_-\\
\tr_1z_+ \\
 \tr_{-1}z_- 
\end{pmatrix}
\end{equation}
is an isomorphism between  $C^{2+\alpha}(\ov\0_+)\times  C^{2+\alpha}(\ov\0_-)$ and $  C^{\alpha}(\ov\0_+)\times C^{\alpha}(\ov\0_-) \times \big(C^{1+\alpha}(\s)\big)^2\times\big( C^{2+\alpha}(\s)\big)^2.$ 
It is easily seen that for $\lambda=0$ the operator \eqref{opa} is an isomorphism between these spaces (as the third and fourth operators defined by \eqref{opa} lead to  decoupled equations).
As the mapping $$(z_+,z_-)\mapsto \big(0,0,0,-\lambda a^{-1}\tr_0(z_+-z_-),0,0\big)$$ is compact, we conclude that \eqref{opa} defines a Fredholm operator of index zero.
We are left to show that \eqref{opa} defines for each $\lambda>0$ an operator which is one-to-one.
So, let $(z_+,z_-)$ be a solution to \eqref{DP1} corresponding to $F=0$ and assume that $$\max_{\ov\0_-} z_-=z_-(x_0,0)>0.$$
If $z_-\not\equiv 0,$ Hopf's lemma ensures that $$\big(\B(f_*)z_-\big)(x_0)=\lambda(\Delta_\rho)^{-1}[z_+(x_0,0)-z_-(x_0,0)]>0,$$ hence $\max_{\ov\0_+} z_+>\max_{\ov\0_-} z_-.$ 
 On the other hand,  if $\max_{\ov\0_+} z_+=z_+(x_1,0),$ then Hopf's lemma implies $\big(\B(f_*,h_*)z_+\big)(x_1)<0$, whence $(z_+-z_-)(x_1,0)<0.$ 
 This contradicts the inequality $\max_{\ov\0_+} z_+>\max_{\ov\0_-} z_-,$ meaning that  $z_-\equiv0.$
 But then also $z_+\equiv0, $ and    the proof is complete.
  \end{proof}


\section{The second diagonal operator}\label{Sec:6}
In this section we prove that the Fr\'echet derivative $\p_h\Phi_2(0,(f_0,h_0))$ is the generator of a strongly continuous and analytic 
semigroup as stated in \eqref{Claim2} when $(f_0,h_0)$ and $b_0$ are such that the second inequality of \eqref{CT1} is satisfied.
To derive the corresponding Theorem~\ref{T:c2} we proceed in a similar way as in Section \ref{Sec:5} and first identify  the ``leading order part''  $\p_h\Phi_2^\pi(f_0,h_0)$ of $\p_h\Phi_2(0,(f_0,h_0))$. 
In Lemma \ref{L:LOP2} it is shown, however, that the latter is related to the solution operator of a Dirichlet problem which differs from the case considered in Section \ref{Sec:5} 
where the leading order part $\p_f\Phi_1^\pi$ was related to a diffraction problem.
 The arguments that follow are thus somewhat simpler than those in Section~\ref{Sec:5}.
 \begin{lemma}\label{L:LOP2}
 Let $(f_*,h_*)\in\V$ and let $\p_{h}\Phi_2^\pi(f_*,h_*)\in\kL\big(h^{2+\alpha}(\s), h^{1+\alpha}(\s)\big)$ denote the operator defined by
 \begin{equation}\label{LOP2}
 \p_{h}\Phi_2^\pi(f_*,h_*)[h]:=-\frac{k}{\mu_+}\Big(\frac{2h_*' \tr_1\p_yv_+^*}{h_*- f_*} -  \tr_1\p_xv_+^*\Big)h'   -\B_1(f_*, h_*) W_+^{\pi}[h],\quad h\in h^{2+\alpha}(\s)\,,
 \end{equation}
where  $(v_+^*,v_-^*):=(v_+^*,v_-^*)(f_*,h_*,b_0)$ is defined in \eqref{zero} and  where $ W_+^{\pi}[h]$  denotes the solution to the Dirichlet problem
 \begin{equation}\label{P3'}
\left\{
\begin{array}{rllllll}
\A^\pi_1(f_*,h_*) W_+^{\pi}[h]\!\!&=&\!\!\cfrac{\tr_1\p_yv_+^*}{h_*-f_*}h''&\text{in $\Omega_+$},\\
W_+^{\pi}[h]\!\!&=&\!\!0&\text{on $ \Gamma_0$},\\
W_+^{\pi}[h]\!\!&=&\!\!g\rho_+h&\text{on $ \Gamma_1$},
\end{array}
\right.
\end{equation}
with
\begin{equation}\label{eqs2}
\begin{aligned}
&\A^\pi_1(f_*,h_*):=\p_{xx}-\frac{2h_*'}{ h_*-f_*}\p_{xy}+\frac{h_*'^2+1}{(h_*-f_*)^2}\p_{yy}.
\end{aligned}
\end{equation}
Then, given $\e\in(0,1)$, there exists $K_3=K_3(\e)>0$ such that  
\begin{align}
&\|\p_{h}\Phi_2(0,(f_*,h_*))[h]-\p_{h}\Phi_2^\pi(f_*,h_*)[h]\|_{1+\alpha}\leq \e\|h\|_{2+\alpha}+K_3\|h\|_{1+\alpha}\qquad\text{for all $h\in h^{2+\alpha}(\s)$}.\label{Claim5}
\end{align}
Moreover, $ \p_{h}\Phi_2^\pi \in C^\omega\big(\V, \kL\big(h^{2+\alpha}(\s), h^{1+\alpha}(\s)\big)\big)$.
\end{lemma}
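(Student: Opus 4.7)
The plan is to follow the blueprint of the proof of Lemma \ref{L:LOP1}, with the cutoff now concentrated near the upper boundary $\Gamma_1$ and the underlying boundary value problem being a pure Dirichlet problem in $\Omega_+$ rather than a diffraction problem in $\Omega_+\cup\Omega_-$. The real-analytic regularity $\p_h\Phi_2^\pi\in C^\omega(\V,\kL(h^{2+\alpha}(\s),h^{1+\alpha}(\s)))$ will follow from the same reasoning as in Theorem \ref{DP}, specialized to the Dirichlet problem \eqref{P3'}. Concentrating therefore on \eqref{Claim5}, fix $\e\in(0,1)$ and $\alpha'\in(0,\alpha)$. Computing $\p_h\B_1(f_*,h_*)$ as in Appendix \ref{App} (analogously to $\p_f\B(f_*)$), the algebraic $h'$-contributions to $\p_h\Phi_2(0,(f_*,h_*))[h]$ and $\p_h\Phi_2^\pi(f_*,h_*)[h]$ match up to a remainder bounded by $K\|h\|_{1+\alpha}$. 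Setting $U:=W_+[h]-W_+^\pi[h]$, the task reduces to estimating
\[
\|\B_1(f_*,h_*)U\|_{1+\alpha}\leq C\|\tr_1\nabla U\|_{1+\alpha}\leq C\|\chi U\|_{2+\alpha}^{\Omega_+},
\]
where $\chi=\chi_\delta\in C^\infty([0,1])$ is chosen so that $\chi\equiv 1$ for $y\geq 1-\delta/2$ and $\chi\equiv 0$ on $[0,1-\delta]$.

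A direct comparison of \eqref{P3} and \eqref{P3'} shows that $U$ solves
\[
\A^\pi_1(f_*,h_*)U=(\A^\pi_1-\A(f_*,h_*))W_+[h]-\p_h\A(f_*,h_*)[h]v_+^*-\frac{\tr_1\p_yv_+^*}{h_*-f_*}h''\quad\text{in }\Omega_+,
\]
subject to $U=W_-[h]$ on $\Gamma_0$ and $U=0$ on $\Gamma_1$. The structural property to be exploited is that, to leading order in $h$, each term on the right-hand side carries a factor vanishing at $y=1$. Indeed, the second-order coefficients of $\A^\pi_1-\A(f_*,h_*)$ are of the form $(1-y)(\mathrm{smooth})$, since $\A^\pi_1$ is defined by freezing the leading coefficients of $\A(f_*,h_*)$ at $y=1$; and the combination $-\p_h\A(f_*,h_*)[h]v_+^*-\frac{\tr_1\p_yv_+^*}{h_*-f_*}h''$ is calibrated so that its coefficient of $h''$ vanishes at $y=1$, the residual $h$- and $h'$-dependence being controlled by $C\|h\|_{1+\alpha}$. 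Because $\chi\equiv 0$ on $\Gamma_0$ and $U=0$ on $\Gamma_1$, the function $\chi U$ satisfies a pure Dirichlet problem on $\Omega_+$ with zero boundary data and right-hand side $\chi\,\A^\pi_1U+[\A^\pi_1,\chi]U$.

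Applying Corollary \ref{C:1} with trivial lower component (or the classical Schauder estimate for the Dirichlet problem in $\Omega_+$), and absorbing the commutator $[\A^\pi_1,\chi]U$, supported in $\{1-\delta\leq y\leq 1-\delta/2\}$, into a $C(\delta)\|h\|_{2+\alpha'}$-term via \eqref{Schauder} applied to \eqref{P3} and \eqref{P3'}, the structural vanishing property yields $\|\chi U\|_{2+\alpha}^{\Omega_+}\leq a(\delta)\|h\|_{2+\alpha}+C(\delta)\|h\|_{2+\alpha'}$ with $a(\delta)\to 0$ as $\delta\to 0^+$. Choosing $\delta$ such that $Ca(\delta)<\e/2$ and invoking the interpolation property \eqref{interpolation} together with Young's inequality will then complete the proof. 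The main difficulty lies in verifying the structural vanishing claim for $-\p_h\A(f_*,h_*)[h]v_+^*-\frac{\tr_1\p_yv_+^*}{h_*-f_*}h''$: one must carefully unpack the pullback formulas of Appendix \ref{App} to confirm that the $h''$-dependence of $\A(f,h)$ enters through $F_{xx}=yh''+(1-y)f''$ under $\phi_{(f,h)}(x,y)=(x,yh(x)+(1-y)f(x))$, and that $\A^\pi_1$ together with the source term of \eqref{P3'} are precisely engineered to annihilate the $y=1$ value of this leading $h''$-contribution, leaving a residual of order $(1-y)$.
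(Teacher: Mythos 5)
Your proposal is correct and follows essentially the same route as the paper. The only cosmetic difference is that you cast $\chi U$ directly as a zero-data Dirichlet problem on $\Omega_+$, whereas the paper writes $(u_+^\chi,u_-^\chi)=\chi\bigl(W_+^\pi[h]-W_+[h],\,-W_-[h]\bigr)$ as a diffraction system and invokes \eqref{Schauder}; since $\chi$ vanishes on $\overline{\Omega_-}$ the lower component is identically zero, so the two viewpoints are literally the same estimate. Your structural observations are the key points of the paper's proof: the second-order coefficients of $\A^\pi_1-\A(f_*,h_*)$ carry a factor $1-y$ (because $\A^\pi_1$ freezes those coefficients at $y=1$), and the $h''$-coefficient of $-\p_h\A(f_*,h_*)[h]v_+^*-\tfrac{\tr_1\p_yv_+^*}{h_*-f_*}h''$, namely $(y\p_yv_+^*-\tr_1\p_yv_+^*)/(h_*-f_*)$, vanishes on $\Gamma_1$. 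One small point you leave implicit but should be aware of: the first-order $\p_y$-coefficient of $\A(f_*,h_*)$ (absent in $\A^\pi_1$) does \emph{not} carry a $(1-y)$-factor, but the resulting term $b_2^+\p_yW_+[h]$ involves only one derivative of $W_+[h]$ and so is absorbed into the $C(\delta)\|h\|_{2+\alpha'}$ remainder — as is also done in the paper.
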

\begin{proof} The regularity assertion follows by using Theorem \ref{DP}. As for \eqref{Claim5}, let $\e\in(0,1)$ be given and $\alpha'\in(0,\alpha)$ be fixed. 
Given $\delta\in(0,1)$, we pick a cut-off function $\chi:=\chi_\delta\in C^\infty([-1,1])$ such that   $0\leq \chi \leq 1,$ $\chi=0$ for $y\leq 1-\delta,$ and   $\chi=1$ for $y\geq1-\delta/2.$
  We then obtain from \eqref{LOP2}, \eqref{DC}, and the Appendix that
 \begin{align*}
  \|\p_{h}\Phi_2(0,(f_*,h_*))[h]-\p_{h}\Phi_2^\pi (f_*,h_*)[h]\|_{1+\alpha}\leq &C\big(\|h\|_{1+\alpha}+\|\B_1(f_*,h_*)(W_+[h]-W_+^{\pi}[h])\|_{1+\alpha}\big)\\
  \leq &C\big(\|h\|_{1+\alpha}+\|\tr_1 \partial_y(W_+[h]-W_+^{\pi}[h])\|_{1+\alpha}\big)\\
   \leq &C\big(\|h\|_{1+\alpha}+\|\chi(W_+[h]-W_+^{\pi}[h])\|_{2+\alpha}^{\0_+}\big),
 \end{align*}
where $(W_+[h],W_-[h])=(\p_hv_+^*(f_*,h_*,b_0)[h],\p_hv_-^*(f_*,h_*,b_0)[h])$ is the solution to \eqref{P3} and $C$ is independent of $\delta$.
 We now notice that the pair $(u_+,u_-):=(W_+^{\pi}[h],0)-(W_+[h],W_-[h])$  solves  according to \eqref{P3'}, \eqref{P3}, and the formulas for $\mathcal{A}(f_*,h_*)$ and $\partial_h \mathcal{A}(f_*,h_*)$ from the Appendix a diffraction problem of the form
 \begin{equation*}
\left\{
\begin{array}{rllllll}
\A^\pi_1(f_*,h_*)u_+\!\!&=&\!\!a_0^+h+a_1^+h'+(y\p_yv_+^*-\tr_1\p_yv_+^*)a_2^+h''\\
&&+(y-1)(b_0^+\partial_{xy} W_+[h]+b_1^+\partial_{yy}W_+[h])+b_2^+\partial_yW_+[h] &\text{in $\Omega_+$},\\
\A(f_*) u_-\!\!&=&\!\!0&\text{in $ \Omega_-$},\\
 \B(f_*,h_*)u_+-\B(f_*)u_-\!\!&=&\!\!\p_h\B(f_*,h_*)[h]v_+^*+\B(f_*,h_*)W_+^\pi[h]&\text{on $ \Gamma_{0}$},\\
u_+-u_-\!\!&=&\!\!0&\text{on $ \Gamma_0$},\\
u_+\!\!&=&\!\!0&\text{on $ \Gamma_1$},\\
 u_-\!\!&=&\!\!0&\text{on $\G_{-1}$},
\end{array}
\right.
\end{equation*}
with $a_i^+, b_i^+\in h^{\alpha}(\0_+)$, $0\leq i\leq2.$
Therefore,  $(u_+^\chi,u_-^\chi):= \chi(u_+,u_-)$ solves
  \begin{equation*} 
\left\{
\begin{array}{rllllll}
\A^\pi_1(f_*,h_*)u_+^\chi\!\!&=&\!\!\chi\A^\pi_1(f_*,h_*)u_+-\frac{2h_*'}{ h_*-f_*}\chi'\partial_xu_{+}+\frac{h_*'^2+1}{(h_*-f_*)^2}(2\chi'\partial_yu_{+}+\chi''u_+) &\text{in $\Omega_+$},\\
\A(f_*) u_-^\chi\!\!&=&\!\!0&\text{in $ \Omega_-$},\\
 \B(f_*,h_*)u_+^\chi-\B(f_*)u_-^\chi\!\!&=&\!\! 0&\text{on $ \Gamma_{0}$},\\
u_+^\chi-u_-^\chi\!\!&=&\!\!0&\text{on $ \Gamma_0$},\\
u_+^\chi\!\!&=&\!\!0&\text{on $ \Gamma_1$},\\
 u_-^\chi\!\!&=&\!\!0&\text{on $\G_{-1}$}.
\end{array}
\right.
\end{equation*}
Recalling the Schauder estimate \eqref{Schauder}, we obtain that
\begin{align*}
\|u_+^\chi\|_{2+\alpha}^{\0_+}\leq& C \|\chi\A^\pi_1(f_*,h_*)u_+\|_{\alpha}^{\0_+}+C(\delta)\|u_+\|_{2+\alpha'}^{\0_+} \\
 \leq & C\big(\|\chi(y-1)( b_0^+\partial_{xy}W_+[h]+b_1^+\partial_{yy}W_+[h])\|_{\alpha}^{\0_+}+\|\chi(y\p_yv_+^*-\tr_1\p_yv_+^*) a_2^+h''\|_{\alpha}^{\0_+}\big)\\
& +C(\delta)\|h\|_{2+\alpha'} \\
 \leq & C\big(\|\chi(y-1)\|_0^{\0_+}+ \|\chi(y\p_yv_+^*-\tr_1\p_yv_+^*)\|_0^{\0_+}\big)\|h\|_{2+\alpha}+C(\delta)\|h\|_{2+\alpha'},
\end{align*}
and choosing $\delta>0$ such that $\|\chi(y-1)\|_0^{\0_+}+ \|\chi(y\p_yv_+^*-\tr_1\p_yv_+^*)\|_0^{\0_+}<\e/2C$, interpolation properties of small H\"older spaces lead us to the desired estimate \eqref{Claim5}. 
\end{proof}

As in the previous section  we introduce for $\sigma>0$  the set $R_\sigma$ consisting of those $(f_*,h_*)\in\V$ satisfying the inequalities
\begin{equation*}
 \begin{array}{lll}
 (1) \! & \sigma< \min\{f_*-d, h_*-f_*\}, \qquad \|f_*\|_2+\|h_*\|_2< \sigma^{-1}\,,\\[2ex]
 (2) \! &\displaystyle g\rho_+> \sigma+\frac{\tr_1\p_yv_+^*}{h_*-f_*}\,,\\[3ex]
  (3) \! &   \|\tr_1\p_yv_+^*\|_0< \sigma^{-1} \,,
 \end{array}
\end{equation*}
where $(v_+^*,v_-^*)$ is defined in \eqref{zero}.
Since the functions $(f_0,h_0)$ and $b_0$ are chosen such that the second inequality in \eqref{CT1} is satisfied, we may choose $\sigma$ such that  $(f_0,h_0)\in R_\sigma$. 

\begin{lemma}\label{L:Ad1'}
Let $\sigma>0$ be such that  $(f_0,h_0)\in R_\sigma$.
Assume  there exists a constant $\wt \kappa_2:=\wt \kappa_2(\sigma)$ and for each $(f_*,h_*)\in  \big(h^{3+\alpha}(\s)\big)^2\cap R_\sigma \cap \mathbb{B}_{(h^{2+\alpha}(\s))^2}((f_0,h_0),\sigma)$ there exists a further constant $\wt \omega_2>0$ with the property that
$$-\p_{h}\Phi_{2}^{\pi}(f_*,h_*)\in \kH\big(h^{2+\alpha}(\s), h^{1+\alpha}(\s),\wt \kappa_2,\wt \omega_2\big).$$
It then holds
$$-\p_{h}\Phi_{2}^{\pi}(f_0,h_0)\in \kH\big(h^{2+\alpha}(\s), h^{1+\alpha}(\s)\big).$$
\end{lemma}
\begin{proof}
 The proof is similar to that of Lemma \ref{L:Ad1}.
\end{proof}

We are thus left  to prove that there exists $\sigma>0$ such that  $\p_{h}\Phi_{2}^{\pi}(f_*,h_*)$ is a analytic  generator   for each $(f_*,h_*)\in  \big(h^{3+\alpha}(\s)\big)^2\cap R_\sigma \cap \mathbb{B}_{(h^{2+\alpha}(\s))^2}((f_0,h_0),\sigma),$
with the constant $\wt\kappa_2$ depending only on $\sigma$. 
To this end we use the additional regularity  of $(f_*,h_*)\in  \big(h^{3+\alpha}(\s)\big)^2\cap R_\sigma$ to introduce again a parameter $\tau\in[0,1] $ which 
enables us to continuously transform the  leading order part  $\p_{h}\Phi_2^{\pi*}:=\p_{h}\Phi_2^\pi(f_*,h_*)$ into a (negative) Dirichlet-Neumann map.
More precisely, for each $\tau\in[0,1]$ we define the operator
$\p_{h}\Phi_{2,\tau}^\pi\in\kL\big(h^{2+\alpha}(\s), h^{1+\alpha}(\s)\big)$  by the formula
 \begin{equation}\label{HO2}
\p_{h}\Phi_{2,\tau}^{\pi*}[h]:=-\frac{\tau k}{\mu_+}\Big(\frac{2h_*' \tr_1\p_yv_+^*}{h_*- f_*} -  \tr_1\p_xv_+^*\Big)h'   -\B_1(f_*, h_*) W_{+\tau}^{\pi}[h], \qquad h\in h^{2+\alpha}(\s),
 \end{equation}
 with  $W_{+\tau}^{\pi}[h]$ denoting the solution to 
 \begin{equation}\label{HO2a}
\left\{
\begin{array}{rllllll}
\A^\pi_1(f_*,h_*) W_{+\tau}^{\pi}[h]\!\!&=&\!\!\cfrac{\tau\tr_1\p_yv_+^*}{h_*-f_*}h''&\text{in $\Omega_+$},\\
W_{+\tau}^{\pi}[h]\!\!&=&\!\!0&\text{on $ \Gamma_0$},\\
W_{+\tau}^{\pi}[h]\!\!&=&\!\!\Big[g\rho_+ -(1-\tau)\cfrac{\tr_1\p_yv_+^*}{h_*-f_*}\Big]h&\text{on $ \Gamma_1$}.
\end{array}
\right.
\end{equation}
For $\tau=1$ we see that $\p_{h}\Phi_{2,1}^{\pi*}=\p_{h}\Phi_{2}^\pi(f_*,h_*),$ while for $\tau=0$ we obtain a Dirichlet-Neumann operator.

We now prove the following perturbation  result.

\begin{thm}\label{T2} Let $\sigma>0$ be such that  $(f_0,h_0)\in R_\sigma$ and let  $\mu>0$ and  $\alpha'\in(0,\alpha)$ be given.
Then, given $(f_*,h_*)\in  \big(h^{3+\alpha}(\s)\big)^2\cap R_\sigma$,  there exist an integer $p\geq 3$, a $p$-partition of unity $\{\Pi_j^p\}_{1\leq j\leq 2^{p+1}}$, and a constant $K_4=K_4(p)$, and for each $\tau\in[0,1]$ and $1\leq j\leq 2^{p+1}$ 
there are bounded operators $\bB_{\tau,j}\in\kL\big(h^{2+\alpha}(\s), h^{1+\alpha}(\s)\big)$
 such that 
 \begin{equation}\label{DE2}
  \|\Pi_j^p\p_h\Phi^{\pi*}_{2,\tau}[h]-\bB_{\tau,j}[\Pi^p_j h]\|_{1+\alpha}\leq \mu \|\Pi_j^ph\|_{2+\alpha}+K_4\|h\|_{2+\alpha'} 
 \end{equation}
 for all $h\in h^{2+\alpha}(\s)$.
The operators $\bB_{\tau,j}$ are defined  by the formula
  \begin{equation}\label{HOjb}
 \bB_{\tau,j}[h]:=-\frac{\tau k}{\mu_+}\Big(\frac{2h_*' \tr_1\p_yv_+^*}{h_*- f_*} -  \tr_1\p_xv_+^*\Big)\Big|_{x_j^p}h'
 - \frac{k}{\mu_+}\Big(\frac{1+h_*'^2}{h_*-f_*}\Big|_{x_j^p}\tr_1 \p_yW_{+\tau}^{\pi,j}[h]-h_*'(x_j^p)\tr_1 \p_xW_{+\tau}^{\pi,j}[h]\Big),
 \end{equation}
where $ W_{+\tau}^{\pi,j}[h] $ denotes the solution to the problem
  \begin{equation}\label{HO2b}
\left\{
\begin{array}{rllllll}
\A^\pi_{1,j}(f_*,h_*) W_{+\tau}^{\pi,j}[h]\!\!&=&\!\!\cfrac{\tau\tr_1\p_yv_+^*}{h_*-f_*}\Big|_{x_j^p}h''&\text{in $\Omega_+$},\\
W_{+\tau}^{\pi,j}[h]\!\!&=&\!\!0&\text{on $ \Gamma_0$},\\
W_{+\tau}^{\pi,j}[h]\!\!&=&\!\! \Big[g\rho_+-(1-\tau)\cfrac{\tr_1\p_yv_+^*}{h_*-f_*}\Big|_{x_j^p}\Big] h&\text{on $ \Gamma_1$},\\
\end{array}
\right.
\end{equation}
with $\A^\pi_{1,j}(f_*,h_*)$  
being the operator obtained from  $\A^\pi_1(f_*,h_*) $ when evaluating its coefficients at  $x_j^p$.
\end{thm}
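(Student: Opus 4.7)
\medskip
\noindent\textbf{Proof plan for Theorem \ref{T2}.}\, The plan is to mimic the strategy of Theorem~\ref{T1}, exploiting that the auxiliary problems \eqref{HO2a} and \eqref{HO2b} are simple Dirichlet problems on $\0_+$ rather than coupled diffraction systems, which removes several of the terms that appeared in the previous proof. Fix $\mu>0$ and $\alpha'\in(0,\alpha)$. Given $(f_*,h_*)\in \bigl(h^{3+\alpha}(\s)\bigr)^2\cap R_\sigma$, I choose $p\geq 3$ large enough (to be specified) and a corresponding $p$-partition of unity $\{\Pi_j^p\}_{1\leq j\leq 2^{p+1}}$ with associated cut-offs $\{\chi_j^p\}$ satisfying $\chi_j^p\Pi_j^p=\Pi_j^p$. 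Then I decompose
\[
 \Pi_j^p\p_h\Phi^{\pi*}_{2,\tau}[h]-\bB_{\tau,j}[\Pi^p_j h]=T_1+T_2+T_3,
\]
where $T_1$ collects the first-order contribution (the difference between freezing the coefficient of $h'$ at $x_j^p$ and not freezing it), $T_2$ is the difference
\[
  \tfrac{k}{\mu_+}\tfrac{1+h_*'^2}{h_*-f_*}\Big|_{x_j^p}\tr_1\p_yW^{\pi,j}_{+\tau}[\Pi_j^p h]\ -\ \tfrac{k}{\mu_+}\tfrac{1+h_*'^2}{h_*-f_*}\,\Pi_j^p\tr_1\p_yW^{\pi}_{+\tau}[h],
\]
and $T_3$ the analogous difference involving $\p_x$ and the coefficient $h_*'$.

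\medskip
\noindent For $T_1$ I proceed exactly as in the corresponding step of Theorem~\ref{T1}: using $\chi_j^p\Pi_j^p=\Pi_j^p$ and the uniform continuity on $\s$ of the coefficient $\bigl(2h_*'\tr_1\p_yv_+^*\bigr)/(h_*-f_*)-\tr_1\p_xv_+^*$, the oscillation on $\supp\chi_j^p$ can be made as small as desired by taking $p$ large, yielding $\|T_1\|_{1+\alpha}\leq(\mu/3)\|\Pi_j^p h\|_{2+\alpha}+K\|h\|_{2}$.

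\medskip
\noindent The estimates for $T_2$ and $T_3$ are the core of the argument, and they are handled together. After inserting $\chi_j^p$ where needed (and using $\chi_j^p\Pi_j^p=\Pi_j^p$), the task reduces, via trace estimates, to bounding in $h^{2+\alpha}(\0_+)$ the difference
\[
 z_+:=\chi_j^p\bigl(W^{\pi,j}_{+\tau}[\Pi_j^p h]-\Pi_j^p W^{\pi}_{+\tau}[h]\bigr).
\]
A direct computation using \eqref{HO2a} and \eqref{HO2b} shows that $z_+$ solves a Dirichlet problem on $\0_+$ with zero boundary value on $\G_0$ (both solutions vanish there) and, on $\G_1$, a boundary datum proportional to $\chi_j^p\bigl[(1-\tau)(\tr_1\p_yv_+^*/(h_*-f_*)-\tr_1\p_yv_+^*/(h_*-f_*)|_{x_j^p})\bigr]\Pi_j^p h$, whose $C^{2+\alpha}$-norm is bounded by $\mathrm{osc}\cdot\|\Pi_j^p h\|_{2+\alpha}+K\|h\|_{1+\alpha}$ with oscillation tending to zero as $p\to\infty$. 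The right-hand side in $\0_+$ is a sum of three types of terms: (i) the commutator of $\A^\pi_1(f_*,h_*)$ with $\chi_j^p$ applied to the localized solutions, which only produces derivatives of $\chi_j^p$ and is estimated by $K(\delta)\|h\|_{2+\alpha'}$ through Schauder bounds on the full solutions; (ii) the difference $\bigl(\A^\pi_1(f_*,h_*)-\A^\pi_{1,j}(f_*,h_*)\bigr)\bigl[\chi_j^p W^{\pi,j}_{+\tau}[\Pi_j^p h]\bigr]$, whose coefficient oscillation on $\supp\chi_j^p$ is small; (iii) the term $\chi_j^p\bigl(\tr_1\p_yv_+^*/(h_*-f_*)\big|_{x_j^p}-\tr_1\p_yv_+^*/(h_*-f_*)\bigr)\Pi_j^p h''$, again with small oscillation. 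Applying the Dirichlet-case Schauder estimate (a special case of Corollary~\ref{C:1}) gives
\[
 \|z_+\|_{2+\alpha}^{\0_+}\leq(\mu/3)\|\Pi_j^p h\|_{2+\alpha}+K\|h\|_{2+\alpha'},
\]
for $p$ large enough, and combining this with the estimate for $T_1$ yields \eqref{DE2} with $K_4=K_4(p)$.

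\medskip
\noindent The main obstacle is the same as in Theorem~\ref{T1}: controlling the nonlocal terms $T_2$ and $T_3$ requires that the commutators between the cut-offs $\chi_j^p,\,\Pi_j^p$ and the (frozen versus unfrozen) elliptic operators are absorbable, which is possible only because all their coefficients appearing as differences vanish at $x_j^p$ and hence have arbitrarily small $C^0$-norm on $\supp\chi_j^p$ as $p\to\infty$. Here we critically use the hypothesis $(f_*,h_*)\in\bigl(h^{3+\alpha}(\s)\bigr)^2$, which via \cite[Theorem 9.3]{ADN64} guarantees that $\tr_1\p_yv_+^*/(h_*-f_*)\in h^{2+\alpha}(\s)$ so that the frozen boundary datum on $\G_1$ in \eqref{HO2b} is admissible and the Schauder estimate applied to $z_+$ is valid.
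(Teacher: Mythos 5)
Your plan is correct and, up to a reorganization of terms, follows the paper's own proof: both reduce the nonlocal part of $\Pi_j^p\p_h\Phi^{\pi*}_{2,\tau}[h]-\bB_{\tau,j}[\Pi_j^p h]$ to a Schauder estimate on the cut-off difference $z_+=\chi_j^p\bigl(W^{\pi,j}_{+\tau}[\Pi_j^ph]-\Pi_j^pW^{\pi}_{+\tau}[h]\bigr)$, whose PDE has coefficient differences vanishing at $x_j^p$ and a small boundary datum on $\Gamma_1$, combined with a $(1-\chi_j^p)$-supported term that vanishes thanks to $(1-\chi_j^p)\Pi_j^p=0$. The only presentational difference is that the paper uses the Dirichlet data of $W^\pi_{+\tau}[h]$ on $\Gamma_1$ to rewrite $\tr_1\p_xW^{\pi}_{+\tau}[h]$ as an explicit local first-order expression and absorbs it into the frozen-coefficient commutators $S_1,S_2$, whereas you keep that contribution nonlocal in $T_3$ and treat it in parallel with $T_2$ via $z_+$; both routes yield \eqref{DE2}, and your use of the regularity $V:=\tr_1\p_y v_+^*/(h_*-f_*)\in h^{2+\alpha}(\s)$ (from $(f_*,h_*)\in (h^{3+\alpha}(\s))^2$ via \cite[Theorem~9.3]{ADN64}) for the boundary datum of $z_+$ is exactly the point where the additional hypothesis is needed.
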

\begin{proof}
 Let  $\mu>0$ and  $\alpha'\in(0,\alpha)$ be fixed. 
Given $p\geq 3$, a $p$-partition of unity $\{\Pi_j^p\}_{1\leq j\leq 2^{p+1}}$, and an associated family $\{\chi_j^p\}_{1\leq j\leq 2^{p+1}}$ (see Section~\ref{Sec:5})
 we decompose
$$\Pi_j^p\p_h\Phi^{\pi*}_{2,\tau}[h]-\bB_{\tau,j}[\Pi^p_j h]=S_1+S_2+S_3$$ with 
\begin{align*}
 &S_1:=\frac{ (1+\tau)kg\rho_+}{\mu_+}\big(\Pi_j^ph_*'h'-h_*'(x_j^p)(\Pi_j^ph)'\big),\\
 &S_2:=\frac{ (1+\tau) k}{\mu_+}\Big[\frac{h_*' \tr_1\p_yv_+^*}{h_*- f_*}\Big|_{x_j^p}(\Pi_j^ph)'-\frac{h_*' \tr_1\p_yv_+^*}{h_*- f_*}\Pi_j^ph'\Big],\\
&S_3:=   \frac{k}{\mu_+}\frac{1+h_*'^2}{h_*-f_*}\Big|_{x_j^p}\tr_1 \p_yW_{+\tau}^{\pi,j}[\Pi_j^ph]-\frac{k}{\mu_+}\frac{1+h_*'^2}{h_*-f_*} \Pi_j^p\tr_1 \p_yW_{+\tau}^{\pi}[h].
\end{align*}
Still, we  use $C$ for constants which are independent of  $p$, constants depending on $p$ being denoted by $K$.
Recalling that $\chi_j^p\Pi_j^p=\Pi_j^p$, we estimate $S_1$ as 
\begin{align*}
 \|S_1\|_{1+\alpha}\leq& C\|\Pi_j^ph_*'h'-h_*'(x_j^p)(\Pi_j^ph)'\|_{1+\alpha}\leq C\|\Pi_j^ph'\chi_j^p(h_*'-h_*'(x_j^p))\|_{1+\alpha}+ K\|h\|_{1+\alpha}\\
 \leq &  C\|\chi_j^p(h_*'-h_*'(x_j^p))\|_{0}\,\|\Pi_j^ph\|_{2+\alpha}+ K\|h\|_{2},
\end{align*}
and similarly
 \begin{align*}
 \|S_2\|_{1+\alpha}\leq&   C\Big\|\chi_j^p\Big(\frac{2h_*' \tr_1\p_yv_+^*}{h_*- f_*}\Big|_{x_j^p}-\frac{2h_*' \tr_1\p_yv_+^*}{h_*- f_*}\Big)\Big\|_{0}\,\|\Pi_j^ph\|_{2+\alpha}+ K\|h\|_{2}.
\end{align*}
Choosing $p$ sufficiently large, we find that
\begin{align}\label{Mozard12}
 \|S_1\|_{1+\alpha}+\|S_2\|_{1+\alpha}\leq&   \frac{\mu}{2}\|\Pi_j^ph\|_{2+\alpha}+ K\|h\|_{1+\alpha}.
\end{align}
We are left to estimate $S_3$.
For this we note that
\begin{align*}
 \|S_3\|_{1+\alpha}\leq &C\Big\|\frac{1+h_*'^2}{h_*-f_*}\Big|_{x_j^p}\tr_1 \p_yW_{+\tau}^{\pi,j}[\Pi_j^ph]- \frac{1+h_*'^2}{h_*-f_*} \Pi_j^p\tr_1 \p_y W_{+\tau}^{\pi}[h] \Big\|_{1+\alpha}\\
 \leq &C\Big\|(1-\chi_j^p)\frac{1+h_*'^2}{h_*-f_*}\Big|_{x_j^p}\tr_1 \p_yW_{+\tau}^{\pi,j}[\Pi_j^ph]\Big\|_{1+\alpha}\\
 &+\Big\|\chi_j^p \frac{1+h_*'^2}{h_*-f_*}\Big|_{x_j^p}\tr_1 \p_yW_{+\tau}^{\pi,j}[\Pi_j^ph]-\chi_j^p\frac{1+h_*'^2}{h_*-f_*} \tr_1 \p_y\big(\Pi_j^pW_{+\tau}^{\pi}[h]\big)\Big\|_{1+\alpha}=:S_{31}+S_{32},
\end{align*}
where
\begin{align*}
 \|S_{31}\|_{1+\alpha}\leq &C\Big\|\tr_1 \p_y\big((1-\chi_j^p)W_{+\tau}^{\pi,j}[\Pi_j^ph]\big)\Big\|_{1+\alpha}\leq C\|(1-\chi_j^p)W_{+\tau}^{\pi,j}[\Pi_j^ph]\|_{2+\alpha}^{\0_+}.
\end{align*}
As $ z_+ :=(1-\chi_j^p) W_{+\tau}^{\pi,j}[\Pi_j^ph] $ solves  according to \eqref{HO2b} and \eqref{eqs2} the  problem
 \begin{equation*} 
\left\{
\begin{array}{rllllll}
\A^\pi_{1,j}(f_*,h_*) z_+\!\!&=&\!\!\cfrac{\tau\tr_1\p_yv_+^*}{h_*-f_*}\Big|_{x_j^p}(1-\chi_j^p)(\Pi_j^ph)''-(\chi_j^p)''W_{+\tau}^{\pi,j}[\Pi_j^ph]\\
&& -2(\chi_j^p)'\p_xW_{+\tau}^{\pi,j}[\Pi_j^ph]+\cfrac{2h_*'}{ h_*-f_*}\Big|_{x_j^p}(\chi_j^p)'\p_yW_{+\tau}^{\pi,j}[\Pi_j^ph]&\text{in $ \Omega_+$},\\
z_+\!\!&=&\!\!0&\text{on $ \Gamma_0$},\\
z_+\!\!&=&\!\!\Big[g\rho_+-(1-\tau)\cfrac{\tr_1\p_yv_+^*}{h_*-f_*}\Big|_{x_j^p}\Big](1-\chi_j^p)\Pi_j^ph&\text{on $ \Gamma_1$},
\end{array}
\right.
\end{equation*}
we infer from $(1-\chi_j^p)\Pi_j^p=0$ and  Schauder estimates for elliptic Dirichlet problems that 
\begin{align}\label{Mozard31}
 \|S_{31}\|_{1+\alpha}\leq&     K\|h\|_{2+\alpha'}.
\end{align}
Finally, 
\begin{align*}
 \|S_{32}\|_{1+\alpha}\leq &C\Big\|\chi_j^p \Big(\frac{1+h_*'^2}{h_*-f_*}\Big|_{x_j^p}-\frac{1+h_*'^2}{h_*-f_*}\Big)\Big\|_0\,\|\Pi_j^ph\|_{2+\alpha}+C\| \chi_j^p   \big(W_{+\tau}^{\pi,j}[\Pi_j^ph]-\Pi_j^pW_{+\tau}^{\pi}[h]\big)\|_{2+\alpha}^{\0_+},
\end{align*}
and for $p$ large enough we have
\begin{align}\label{Mozard32a}
\Big\|\chi_j^p \Big(\frac{1+h_*'^2}{h_*-f_*}\Big|_{x_j^p}-\frac{1+h_*'^2}{h_*-f_*}\Big)\Big\|_0\leq \frac{\mu}{4C}.
\end{align}
On the other hand, $ u_+ :=\chi_j^p   \big(W_{+\tau}^{\pi,j}[\Pi_j^ph]-\Pi_j^pW_{+\tau}^{\pi}[h]\big)$ solves the Dirichlet problem
\begin{equation*} 
\left\{
\begin{array}{rllllll}
\A^\pi_{1}(f_*,h_*) u_+\!\!&=&\!\!\big(\A^\pi_1(f_*,h_*)-\A^\pi_{1,j}(f_*,h_*)\big)\big[\chi_j^p W_{+\tau}^{\pi,j}[\Pi_j^ph]\big]\\
&&+\cfrac{\tau\tr_1\p_yv_+^*}{h_*-f_*}\Big|_{x_j^p}\chi_j^p(\Pi_j^ph)''-\cfrac{\tau\tr_1\p_yv_+^*}{h_*-f_*}\Pi_j^ph''\\
&&+(\chi_j^p)''W_{+\tau}^{\pi,j}[\Pi_j^ph]-(\Pi_j^p)''W_{+\tau}^{\pi}[h]\\
&&+2(\chi_j^p)'\p_xW_{+\tau}^{\pi,j}[\Pi_j^ph]-2(\Pi_j^p)'\p_xW_{+\tau}^{\pi}[h]\\
&&-\cfrac{2h_*'}{ h_*-f_*}\Big|_{x_j^p}(\chi_j^p)'\p_yW_{+\tau}^{\pi,j}[\Pi_j^ph]
+\cfrac{2h_*'}{ h_*-f_*}(\Pi_j^p)'\p_yW_{+\tau}^{\pi}[h]&\text{in $ \Omega_+$},\\
u_+\!\!&=&\!\! (1-\tau )\chi_j^p\Big(\cfrac{\tr_1\p_yv_+^*}{h_*-f_*}-\cfrac{\tr_1\p_yv_+^*}{h_*-f_*}\Big|_{x_j^p}\Big)\Pi_j^ph&\text{on $ \Gamma_0$},\\
u_+\!\!&=&\!\!0&\text{on $ \Gamma_1$},
\end{array}
\right.
\end{equation*}
and therefore
\begin{align*}
 \|u_+\|_{2+\alpha}^{\0_+}\leq & C\Big(\|\big(\A^\pi_1(f_*,h_*)-\A^\pi_{1,j}(f_*,h_*)\big)\big[\chi_j^p W_{+\tau}^{\pi,j}[\Pi_j^ph]\big]\|_\alpha\\
 &\quad+\Big\|\chi_j^p\Big(\cfrac{\tr_1\p_yv_+^*}{h_*-f_*}-\cfrac{\tr_1\p_yv_+^*}{h_*-f_*}\Big|_{x_j^p}\Big)\Big\|_0\|\Pi_j^ph''\|_\alpha\\
 &\quad+ \Big\|\chi_j^p\Big(\cfrac{\tr_1\p_yv_+^*}{h_*-f_*}-\cfrac{\tr_1\p_yv_+^*}{h_*-f_*}\Big|_{x_j^p}\Big)\Big\|_0\|\Pi_j^ph\|_{2+\alpha}\Big)\\
 &+K\|h\|_{2+\alpha'}.
\end{align*}
For $p$ sufficiently large, we obtain together with \eqref{Mozard32a} that 
\begin{align}\label{Mozard32}
 \|S_{32}\|_{1+\alpha}\leq & \frac{\mu}{4} \|\Pi_j^ph\|_{2+\alpha}+C\| u_+\|_{2+\alpha}^{\0_+}\leq \frac{\mu}{2}\|\Pi_j^ph\|_{2+\alpha}+K\|h\|_{2+\alpha'}.
\end{align}
Gathering \eqref{Mozard12}, \eqref{Mozard31}, and \eqref{Mozard32}, the desired estimate \eqref{DE2} follows.
\end{proof}
\bigskip

\noindent{\bf Fourier analysis: The symbol of $\bB_{\tau,j}$.} 
Let $\tau\in[0,1],$ $p,j\in\N$ with  $p\geq 3$ and $1\leq j\leq 2^{p+1}$ be arbitrary.
Given $h\in h^{2+\alpha}(\s)$, we consider its Fourier expansion
\[h=\sum_{m\in\Z}h_me^{imx}\]
and look for the corresponding expansion of $\bB_{\tau,j}[ h].$
Recalling the definition of $\bB_{\tau,j}$ from \eqref{HOjb}, we first determine the expansions for  $\tr_1\nabla  W_{+\tau}^{\pi,j}[h].$
Let 
\begin{align}\label{BAEW}
   W_{+\tau}^{\pi,j}[h] =\sum_{m\in\Z} B_m h_m e^{imx}
\end{align}
with functions $B_m = B_m (y)$ to be determined.
Recalling  \eqref{eqs2},  we  let (using the summation convention)
\begin{align*}
 &\A^\pi_{1,j}(f_*,h_*)=c_{pl}\p_{pl}
\end{align*} 
and set
\begin{align*}
&a:=c_{12}/c_{22},\quad b:=1/c_{22},\quad D:=\sqrt{b-a^2}
\end{align*}
noticing that $b-a^2>0$.
 Since $ W_{+\tau}^{\pi,j}[h]$ is the solution to \eqref{HO2b}, it follows from \eqref{BAEW}  that for each $m\in\Z,$ the function $ B_m $ solves the problem 
 \begin{equation}\label{Hobit2}
\left\{
\begin{array}{rllllll}
(B_m)''+2mia(B_m)'-bm^2B_m\!\!&=&\!\!-\tau bV m^2  &\text{in $ 0<y<1$},\\
B_m(0)\!\!&=&\!\!0,\\
B_m(1)\!\!&=&\!\!g\rho_+-(1-\tau)V,
\end{array}
\right.
\end{equation}
where 
\begin{equation}\label{V}
V:= \frac{\tr_1\p_yv_+}{h_*-f_*}\Big|_{x_j^p}.
\end{equation}

The general solution  to the first equation  of \eqref{Hobit2} is given by the formula
\begin{equation}\label{For1}
 B_m:=u_m+i v_m
\end{equation}
with real part  
\begin{align*}
 u_m(y)=&\zeta_1\Big(\cos(amy)\cosh(Dmy)+\frac{a}{D}\sin(amy)\sin(Dmy)\Big)+\frac{\zeta_2}{D m}\cos(amy)\sin(Dmy)\\
 &+\zeta_3\Big(\sin(amy)\cosh(Dmy)-\frac{a}{D}\cos(amy)\sin(Dmy)\Big)+\frac{\zeta_4}{D m}\sin(amy)\sin(Dmy)+  \tau V
\end{align*}
and imaginary part 
\begin{align*}
 v_m(y)=&\zeta_1\Big(-\sin(amy)\cosh(Dmy)+\frac{a}{D}\cos(amy)\sin(Dmy)\Big)-\frac{\zeta_2}{D m}\sin(amy)\sin(Dmy)\\
 &+\zeta_3\Big(\cos(amy)\cosh(Dmy)+\frac{a}{D}\sin(amy)\sin(Dmy)\Big)+\frac{\zeta_4}{D m}\cos(amy)\sin(Dmy).
\end{align*}
The real constants $\{\zeta_i\,:\, 1\leq i\leq 4\}$ are to be determined such that  the last  two equations of \eqref{Hobit2} are also satisfied by $ B_m $. 
  Explicit  computations  (much simpler compared to those  in Section~\ref{Sec:5}) show that such constants can be uniquely determined to obtain $ B_m $.
Furthermore, using the expressions for $\zeta_i$
it follows again by explicit computations that the operator $\bB_{\tau,j}$ from \eqref{HOjb} is a Fourier multiplier with symbol $(\varphi_m)_{m}$, the real part being given by
\begin{align}\label{FMHR}
 \re\varphi_m:=-\mu_m-\nu_m,
\end{align}
where
\begin{align*}
\mu_m:=\frac{k}{\mu_+}\frac{ g\rho_+- V}{\tanh(Dm)/m}\qquad\text{and}\qquad
 \nu_m:=\frac{\tau kV}{\mu_+}\frac{  \cos(am)}{\sinh(Dm)/m}. 
\end{align*}
The imaginary part is given by
\begin{align}
 \im\varphi_m
 :=&\frac{\tau k}{\mu_+}\Big[\frac{    a( (2-\tau) V-    g\rho_+)}{D}m+  \frac{V\sin(am)}{\sinh(Dm)/m}\Big].\label{FMHI}
\end{align}
The representation of $\bB_{\tau,j}$ allows us now to prove the following generation result.

\begin{lemma}\label{L:2h}
Let $\sigma>0$ be such that   $(f_0,h_0)\in R_\sigma$.
Then, there exist  constants  $\kappa_2\geq 1$ and $\omega_2>0$, depending only $\sigma$ such that for all $(f_*,h_*)\in  \big(h^{3+\alpha}(\s)\big)^2\cap R_\sigma$,   any $p$-partition 
of unity $\{\Pi_j^p\}_{1\leq j\leq 2^{p+1}}$,  $p\in\N$ with $p\geq 3$, and any $\tau\in[0,1]$,
the operators  $\bB_{\tau,j}$,     $1\leq j\leq 2^{p+1}$,   defined by \eqref{HOjb}  satisfy
 \begin{align}\label{13h}
&\lambda-\bB_{\tau,j}\in{\rm Isom}(h^{2+\alpha}(\s),h^{1+\alpha}(\s)),\\[1ex]
\label{14h}
& \kappa_2\|(\lambda-\bB_{\tau,j})[h]\|_{1+\alpha}\geq  |\lambda|\cdot\|h\|_{1+\alpha}+\|h\|_{2+\alpha}
\end{align}
for all $h\in h^{2+\alpha}(\s)$ and $\lambda\in\C$ with $\re \lambda\geq \omega_2$.
\end{lemma}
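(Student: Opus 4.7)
The proof will closely mirror that of Lemma~\ref{L:2}, so I will only indicate the main steps. The starting point is the explicit Fourier representation of $\bB_{\tau,j}$ with symbol $(\varphi_m)_{m\in\Z}$ given by \eqref{FMHR}--\eqref{FMHI}. Because $\bB_{\tau,j}$ arises from a single Dirichlet problem (rather than a coupled diffraction problem), the symbol is simpler than in Lemma~\ref{L:2}, and the estimates to be carried out are correspondingly shorter.

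The first step is to extract uniform bounds on the symbol depending only on $\sigma$. The conditions defining $R_\sigma$ guarantee that
\[
 g\rho_+ - V \;\geq\; \sigma\qquad\text{and}\qquad |V| \leq C_0,
\]
where $V$ is the quantity introduced in \eqref{V}, and that the coefficients $a$, $b$, $D$ of $\A^\pi_{1,j}(f_*,h_*)$ are controlled in terms of $\sigma$. Using $\tanh(Dm)/m \sim 1/|m|$ for large $|m|$ and $\sinh(Dm)\to\infty$ exponentially, one obtains a constant $C_0=C_0(\sigma)>1$ such that
\[
C_0^{-1}\leq \frac{\mu_m}{|m|}\leq C_0,\qquad |\nu_m|+\frac{|\im\varphi_m|}{|m|}\leq C_0,\qquad |\varphi_{m+1}-\varphi_m|\leq C_0,
\]
uniformly in $m\in\Z\setminus\{0\}$, $\tau\in[0,1]$, $x_j^p\in\s$, and $(f_*,h_*)\in(h^{3+\alpha}(\s))^2\cap R_\sigma$. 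The crucial sign comes from the positivity of $g\rho_+ - V$, which yields the correct sign and lower bound for $\mu_m$ and hence for $-\re\varphi_m$; this is the sole place where the Rayleigh-Taylor condition encoded in $R_\sigma$ enters.

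The second step sets $\omega_2:=2C_0$ and considers, for $\re\lambda\geq\omega_2$, the formal inverse $(\lambda-\bB_{\tau,j})^{-1}$ as the Fourier multiplier with symbol $\Lambda_m:=(\lambda-\varphi_m)^{-1}$. Since $\re(\lambda-\varphi_m)\geq C_0$, one computes
\[
 |m|\cdot|\Lambda_m| \leq \frac{|m|}{\re(\lambda-\varphi_m)}\leq C_0,
\]
and, by splitting $\Lambda_{m+1}-\Lambda_m$ into four pieces as in the proof of Lemma~\ref{L:2},
\[
 |m|^{2}|\Lambda_{m+1}-\Lambda_m|\leq K(C_0),
\]
uniformly in the same parameters. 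Appealing to the Marcinkiewicz-type multiplier theorem \cite[Theorem 2.1]{JL12} then yields $\lambda-\bB_{\tau,j}\in\mathrm{Isom}(C^{n+\alpha}(\s),C^{n+1+\alpha}(\s))$ for each $n\in\N$; a density argument transfers the conclusion to the scale $(h^{n+\alpha}(\s))_{n\in\N}$, giving \eqref{13h} together with the smoothing part
\[
 \kappa_2\|(\lambda-\bB_{\tau,j})[h]\|_{1+\alpha}\geq \|h\|_{2+\alpha}.
\]

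The third step upgrades the estimate to include the full norm $|\lambda|\cdot\|h\|_{1+\alpha}$, i.e.\ \eqref{14h}. Writing $\lambda=\zeta_1+i\zeta_2$ with $\zeta_1\geq\omega_2$, one has
\[
 \frac{\zeta_1^{2}}{|\lambda-\varphi_m|^{2}}\leq 4,\qquad
 \frac{\zeta_2^{2}}{|\lambda-\varphi_m|^{2}}\leq 2+\frac{2|\im\varphi_m|^{2}}{m^{2}}\,|m|^{2}|\Lambda_m|^{2}\leq 2(1+C_0^{4}),
\]
so $|\lambda|\cdot|\Lambda_m|$ is bounded uniformly, and the analogous bound for $|\lambda m|\cdot|\Lambda_{m+1}-\Lambda_m|$ follows from the same four-piece decomposition together with the bounds already obtained. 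Invoking \cite[Theorem 2.1]{JL12} once more delivers \eqref{14h} with a constant $\kappa_2$ depending only on $C_0$, i.e.\ only on $\sigma$, which is exactly the claim. The only genuine subtlety in the whole argument is ensuring that all constants are truly uniform in $\tau\in[0,1]$, $j$, $p$, $x_j^p\in\s$, and $(f_*,h_*)\in(h^{3+\alpha}(\s))^{2}\cap R_\sigma$; the definition of $R_\sigma$ was tailored precisely so that this uniformity is automatic from the bounds derived in the first step.
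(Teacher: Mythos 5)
Your proposal is correct and takes essentially the same approach as the paper: establish the uniform (in $\tau$, $j$, $p$, $x_j^p$, and $(f_*,h_*)\in R_\sigma$) bounds on the symbol $(\varphi_m)_m$ using the defining inequalities of $R_\sigma$ (in particular the lower bound $g\rho_+ - V \geq \sigma$), and then carry out the resolvent estimates by mirroring the proof of Lemma~\ref{L:2} with $(\lambda_m)$ replaced by $(\varphi_m)$. The paper's proof is in fact stated precisely in these terms and refers the reader back to Lemma~\ref{L:2} for the details you have spelled out.
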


\begin{proof}
 Recalling \eqref{V} and the definition of $R_\sigma$, we find a constant  
  $C_0>0$ depending on $\sigma$ such that 
 \[C_0^{-1}\leq g\rho_+-V\leq C_0,\]
and such that the relations \eqref{pup}
 are satisfied when replacing $(\lambda_m)$ by $(\varphi_m).$
With this observation, the desired result follows along the lines of  the proof of  Lemma \ref{L:2}.  
 \end{proof}

We are now in a position to establish \eqref{Claim2}.

\begin{thm}\label{T:c2}
 Let $(f_0,h_0)) \in\V$ and $b_0\in h^{2+\alpha}(\s)$ be given such that the second inequality in \eqref{CT1}  is satisfied.  
Then 
 \begin{align*}
 -\p_{h}\Phi_2(0,(f_0,h_0))\in\kH\big(h^{2+\alpha}(\s), h^{1+\alpha}(\s)\big).
\end{align*}
\end{thm}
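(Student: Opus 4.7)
The plan is to mirror the strategy used for Theorem \ref{T:c1}, replacing the diffraction-problem ingredients by their simpler Dirichlet counterparts developed for $\partial_h\Phi_2$. First, using the estimate \eqref{Claim5} from Lemma \ref{L:LOP2} together with the perturbation result \cite[Theorem I.1.3.1 (ii)]{Am95}, it suffices to show that
\[
 -\partial_h\Phi_2^\pi(f_0,h_0)\in\kH\bigl(h^{2+\alpha}(\s),h^{1+\alpha}(\s)\bigr).
\]
Invoking Lemma \ref{L:Ad1'}, it is in turn enough to establish, for a suitably small $\sigma>0$ with $(f_0,h_0)\in R_\sigma$, the existence of a constant $\wt\kappa_2=\wt\kappa_2(\sigma)$ such that for every
\[
(f_*,h_*)\in\bigl(h^{3+\alpha}(\s)\bigr)^2\cap R_\sigma\cap \mathbb{B}_{(h^{2+\alpha}(\s))^2}\bigl((f_0,h_0),\sigma\bigr)
\]
there is $\wt\omega_2>0$ with $-\partial_h\Phi_2^\pi(f_*,h_*)\in\kH\bigl(h^{2+\alpha}(\s),h^{1+\alpha}(\s),\wt\kappa_2,\wt\omega_2\bigr)$.

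Next, fix such a more regular $(f_*,h_*)$ and consider the homotopy $\partial_h\Phi_{2,\tau}^{\pi*}$ defined in \eqref{HO2}-\eqref{HO2a}, which interpolates between the leading order part at $\tau=1$ and a (negative) Dirichlet-Neumann type operator at $\tau=0$. Given the constants $\kappa_2$ and $\omega_2$ produced by Lemma \ref{L:2h}, I would apply Theorem \ref{T2} with $\mu:=(2\kappa_2)^{-1}$ and an auxiliary $\alpha'\in(0,\alpha)$ to obtain a $p$-partition of unity $\{\Pi_j^p\}$ and a constant $K_4$ such that
\[
 \|\Pi_j^p\partial_h\Phi_{2,\tau}^{\pi*}[h]-\bB_{\tau,j}[\Pi_j^ph]\|_{1+\alpha}\leq (2\kappa_2)^{-1}\|\Pi_j^ph\|_{2+\alpha}+K_4\|h\|_{2+\alpha'}
\]
for all $h\in h^{2+\alpha}(\s)$, $\tau\in[0,1]$. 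Combining this with the uniform resolvent estimate \eqref{14h} from Lemma \ref{L:2h} applied to $\Pi_j^p h$, then summing over $j$ via the equivalent norm of Remark \ref{R:1} and absorbing the lower-order term $\|h\|_{2+\alpha'}$ using the interpolation property \eqref{interpolation} and Young's inequality, yields a constant $\wt\omega_2>0$ (depending on $K_4$, and hence in general on the $\|\cdot\|_{3+\alpha}$-norm of $(f_*,h_*)$, cf.\ Remark \ref{R:ad}) such that
\[
 4\kappa_2\|(\lambda-\partial_h\Phi_{2,\tau}^{\pi*})[h]\|_{1+\alpha}\geq |\lambda|\cdot\|h\|_{1+\alpha}+\|h\|_{2+\alpha}
\]
for all $\re\lambda\geq\wt\omega_2$, $h\in h^{2+\alpha}(\s)$, and $\tau\in[0,1]$.

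Finally, to upgrade this a~priori bound to the generator property at $\tau=1$ (which is exactly $\partial_h\Phi_2^\pi(f_*,h_*)$), I use the method of continuity \cite[Theorem 5.2]{GT01}: since the estimate above holds uniformly in $\tau\in[0,1]$, surjectivity of $\lambda-\partial_h\Phi_{2,1}^{\pi*}$ for $\re\lambda\geq\wt\omega_2$ will follow from surjectivity of $\lambda-\partial_h\Phi_{2,0}^{\pi*}$ for such $\lambda$. The operator $\partial_h\Phi_{2,0}^{\pi*}$ is, by construction, a Dirichlet-Neumann operator associated with \eqref{HO2a} at $\tau=0$, whose generation (and hence surjectivity for large $\lambda>0$) can be obtained by a direct adaptation of Proposition \ref{L:DN1}, now for the pure Dirichlet problem on $\Omega_+$ rather than the diffraction problem. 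Concretely, given $F\in h^{1+\alpha}(\s)$ and $\lambda>0$ one solves the auxiliary mixed boundary value problem
\[
\left\{\begin{array}{rl}
\A^\pi_1(f_*,h_*)z=0 & \text{in }\Omega_+,\\
z=0 & \text{on }\Gamma_0,\\
-\lambda a^{-1}\tr_1 z+\B_1(f_*,h_*)z=F & \text{on }\Gamma_1,
\end{array}\right.
\]
with $a:=g\rho_+-\tr_1\p_yv_+^*/(h_*-f_*)>0$ (positivity follows from $(f_*,h_*)\in R_\sigma$). Unique solvability of this problem is obtained by a Fredholm argument exactly as in the proof of Proposition \ref{L:DN1}: at $\lambda=0$ it decouples into a standard Dirichlet problem, the $\lambda$-perturbation is compact, and Hopf's lemma excludes nontrivial kernel elements. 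Then $h:=-a^{-1}\tr_1 z$ solves $(\lambda-\partial_h\Phi_{2,0}^{\pi*})h=F$. This yields the desired isomorphism property at $\tau=0$, and consequently at $\tau=1$, completing the verification of the assumptions of Lemma \ref{L:Ad1'}. The main technical obstacle is the careful bookkeeping in the localization step and the uniformity of $\wt\kappa_2$ in $(f_*,h_*)$ on a small $h^{2+\alpha}$-ball, but this is handled exactly as in the proof of Theorem \ref{T:c1}.
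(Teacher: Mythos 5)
Your proposal follows the paper's proof of Theorem~\ref{T:c2} step for step: reduce via \eqref{Claim5} and \cite[Theorem I.1.3.1 (ii)]{Am95} to the leading part, invoke Lemma~\ref{L:Ad1'}, combine Theorem~\ref{T2} and Lemma~\ref{L:2h} through Remark~\ref{R:1} to obtain the uniform a~priori estimate~\eqref{FE1h}, and then use the method of continuity, with surjectivity at $\tau=0$ supplied by solving an auxiliary Robin-type boundary value problem on $\Omega_+$. The one slip is a sign in the final step: with $a:=g\rho_+-\tr_1\p_yv_+^*/(h_*-f_*)>0$ the auxiliary boundary condition on $\Gamma_1$ should read $\lambda a^{-1}\tr_1 z+\B_1(f_*,h_*)z=F$ and one should set $h:=a^{-1}\tr_1 z$ (so that $\tr_1 W^{\pi}_{+0}[h]=ah=\tr_1 z$, hence $W^{\pi}_{+0}[h]=z$ and $(\lambda-\p_h\Phi^{\pi*}_{2,0})[h]=\lambda a^{-1}\tr_1 z+\B_1 z=F$); your version $-\lambda a^{-1}\tr_1 z+\B_1 z=F$ with $h:=-a^{-1}\tr_1 z$ gives $W^{\pi}_{+0}[h]=-z$ and hence $(\lambda-\p_h\Phi^{\pi*}_{2,0})[h]=-\lambda a^{-1}\tr_1 z-\B_1 z\ne F$, so the signs do not close; this is easily fixed and does not affect the structure of the argument.
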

\begin{proof}  Recalling \eqref{Claim5}  (with $(f_*,h_*)=(f_0,h_0)$) and   \cite[Theorem I.1.3.1 (ii)]{Am95}, we are left  to show that $-\p_h\Phi^\pi_2(f_0,h_0)\in\kH\big(h^{2+\alpha}(\s), h^{1+\alpha}(\s)\big).$
For this, we prove that  the assumptions of Lemma \ref{L:Ad1'} are satisfied provided that $\sigma$ is sufficiently small.

Let $\sigma>0$ be such that  $(f_0,h_0)\in R_\sigma$,  let $\kappa_2$ and $\omega_2$ be the constants found in Lemma \ref{L:2h}, and pick  $\alpha'\in(0,\alpha)$.
Given $(f_*,h_*)\in  \big(h^{3+\alpha}(\s)\big)^2\cap R_\sigma$ and using the same arguments as in the proofs of Theorem \ref{T:c1}, Theorem \ref{T2}, and Lemma \ref{L:2h}, it follows 
 there exists a constant  $\wt\omega_2>1$ such that 
\begin{align}\label{FE1h}
 4\kappa_2\|(\lambda-\p_h\Phi^{\pi*}_{2,\tau})[h]\|_{1+\alpha}\geq&   |\lambda|\cdot\|h\|_{1+\alpha}+\|h\|_{2+\alpha} 
\end{align}
for $\lambda\in \C$ with $\re\lambda\ge \wt\omega_2$, $h\in h^{2+\alpha}(\s)$, and $\tau\in [0,1]$.
As in  the proof of Theorem \ref{T:c1} we may now choose $\sigma$ small to find a constant $\wt\kappa_2\geq1$ depending only on $\sigma$ such that, for $(f_*,h_*)$ belonging additionally to the ball $\mathbb{B}_{(h^{2+\alpha}(\s))^2}((f_0,h_0),\sigma)$, we have
$$-\p_h\Phi^{\pi}_{2}(f_*,h_*) \in \kH\big(h^{2+\alpha}(\s), h^{1+\alpha}(\s),\wt \kappa_2,\wt \omega_2\big),$$
provided $(\wt\omega_2-\p_h\Phi^{\pi*}_{2,1})=(\wt\omega_2-\p_h\Phi^{\pi}_{2}(f_*,h_*))$ is an isomorphism.

We now establish this last property.
Note that, due to \eqref{FE1h},     $(\wt\omega_2-\p_h\Phi^{\pi*}_{2,1})$ is an isomorphism  if $(\wt\omega_2-\p_2\Phi^{\pi*}_{2,0})$ is onto.
To prove the latter let
$\lambda>0$  and $H\in h^{1+\alpha}(\s)$ be given. We  let $z\in h^{2+\alpha}(\0_+)$  be the unique solution to the elliptic boundary value problem
 \begin{equation} \label{DP2}
\left\{
\begin{array}{rllllll}
\A^\pi_1(f_*,h_*) z\!\!&=&\!\!0&\text{in $\Omega_+,$}\\
z \!\!&=&\!\!0 &\text{on $\Gamma_0,$}\\
\lambda\Big(g\rho_+-\cfrac{\tr_1\p_yv_+^*}{h_*-f_*}\Big)^{-1} z+\B_1(f_*,h_*)z\!\!&=&\!\!H&\text{on $ \Gamma_1.$}
\end{array}
\right.
\end{equation}
We then set $$h:=\Big(g\rho_+-\cfrac{\tr_1\p_yv_+^*}{h_*-f_*}\Big)^{-1}\tr_1 z \in h^{2+\alpha}(\s).$$
Recalling the definitions \eqref{HO2} and \eqref{HO2a}, it follows that $W_{+0}^{\pi}[h]=z$
and
\begin{align*}
 (\lambda-\p_h\Phi^{\pi*}_{2,0})[h]=\lambda h+\B_1(f_*,h_*)W_{+0}^{\pi}[h]= \lambda\Big(g\rho_+-\cfrac{\tr_1\p_yv_+^*}{h_*-f_*}\Big)^{-1}\tr_1 z+\B_1(f_*,h_*)z=H.
\end{align*}
In view of Lemma \ref{L:Ad1'} the proof is complete.
  \end{proof}

Finally, we are ready to prove Theorem \ref{MT1}.

\begin{proof}[{\bf Proof of Theorem \ref{MT1}}]
 Because of the equivalence of the problems \eqref{PB} and \eqref{eq:TS} and the reduction of the latter to \eqref{AEP}, we are left to investigate \eqref{AEP}. For the existence and uniqueness result we shall employ an abstract result for fully nonlinear problems \cite[Theorem 8.4.1]{L95}. Let us first note that 
\eqref{AG} is implied by Lemma \ref{L:NDO}, Theorems \ref{T:c1}, \ref{T:c2}, and 
 \cite[Theorem I.1.6.1 and Remark I.1.6.2]{Am95}. 
Thus,  the assumptions of \cite[Theorem 8.4.1]{L95} are satisfied owing to \eqref{reg1}, \eqref{AG}, and the interpolation property~\eqref{interpolation}.
This proves the existence and uniqueness part. 

 Finally, the continuous dependence of the solution on the initial data follows from \cite[Theorem 8.4.4]{L95}.
\end{proof}

\section{The Muskat problem with surface tension effects}\label{Sec:7}

In this section we investigate the Muskat problem introduced in Section \ref{Sec:2} when allowing for surface tension effects in the presence or absence of gravity.
More precisely, instead of being continuous along the interfaces $\Gamma(f)$ and $\Gamma(h)$,
we assume that the pressure jump along an interface is proportional to the curvature of the respective interface, i.e., the pressure
obeys the Laplace-Young equation
\begin{equation}\label{LYE}
\begin{aligned}
& p_--p_+=-\gamma_f\kappa(f)\quad\text{on $\Gamma(f)$,}\\
&p_+=-\gamma_h\kappa(h)\quad\text{on $\Gamma(h),$}
\end{aligned}
\end{equation}
where $\gamma_f$ [resp. $\gamma_h$] is the surface tension coefficient at the interface $\Gamma(f)$ [resp. $\Gamma(h)$] and where for each $\zeta\in C^2(\s)$ the function
\[\kappa(\zeta):=\frac{\zeta''}{(1+\zeta'^2)^{3/2}}\]
is the curvature of the graph $[y=\zeta]$.

Hence, instead of \eqref{eq:S} we consider the system
\begin{equation}\label{eq:SK}
\left\{\begin{array}{rllllll}
\Delta u_+\!\!&=&\!\!0&\text{in}& \Omega(f,h), \\
\Delta u_-\!\!&=&\!\!0&\text{in}& \Omega(f), \\
{\p_th}\!\!&=&\!\!-k\mu_+^{-1}\sqrt{1+h'^2}\p_\nu u_+&\text{on}& \Gamma(h),\\
u_+\!\!&=&\!\!g\rho_+h-\gamma_h\kappa(h)&\text{on}&\Gamma(h),\\
u_-\!\!&=&\!\!b&\text{on}&\G_d,\\
u_+-u_-\!\!&=&\!\!g(\rho_+-\rho_-)f+\gamma_f\kappa(f)&\text{on}& \Gamma(f),\\
{\p_tf}\!\!&=&\!\!-k{\mu_\pm^{-1}}\sqrt{1+f'^2}\p_\nu u_\pm &\text{on}& \Gamma(f),
\end{array}
\right.
\end{equation}
supplemented with the initial conditions \eqref{eq:S1}.

The main result regarding problem \eqref{eq:SK} is following theorem. Its proof is given at the end of this section.

\begin{thm}\label{MT2}
   Let $g\ge 0$, $(f_0,h_0)\in\V\cap (h^{4+\alpha}(\s))^2$, and  $b$ be given such that \eqref{CL} holds.
  Then, there exist a maximal existence time $T_0:=T_0(f_0,h_0)\in(0,T]$ and a unique classical  H\"older solution\footnote{The notion of classical solution for \eqref{eq:SK} and \eqref{eq:S1} is the same as in Section \ref{Sec:2} with the modification 
  that we require additionally to \eqref{CS} that $(f,h)\in C\big([0,T_0),(h^{4+\alpha}(\s))^2\big).$  
  } $(f,h,u_+,u_-)$ to \eqref{eq:SK} and \eqref{eq:S1} on $[0,T_0).$ Additionally, the solutions depend continuously on the initial data.
\end{thm}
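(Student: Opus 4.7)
The plan is to mimic the structure of the proof of Theorem \ref{MT1}: first set up an auxiliary diffraction problem accommodating the Laplace--Young boundary conditions, recast \eqref{eq:SK} as a fully nonlinear abstract evolution equation on the two interfaces, then verify that the Fr\'echet derivative of the right-hand side generates an analytic semigroup, and finally invoke \cite[Theorems 8.4.1 and 8.4.4]{L95}. The crucial new feature is that the curvature contributes a principal part of order three to the linearisation with a definite, surface-tension-driven sign, so parabolicity is automatic without any sign restriction on $b_0$ or on $(f_0,h_0)$ beyond the geometric constraint $(f_0,h_0)\in\V$.

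Concretely, first observe that for $(f,h)\in\V\cap (h^{4+\alpha}(\s))^2$ and $b\in h^{2+\alpha}(\s)$ the inhomogeneous Dirichlet data
\begin{equation*}
g\rho_+ h-\gamma_h\kappa(h),\qquad g(\rho_+-\rho_-)f+\gamma_f\kappa(f),\qquad b
\end{equation*}
all lie in $h^{2+\alpha}(\s)$, so Theorem \ref{DP} applies and furnishes a unique solution $(v_+,v_-)\in h^{2+\alpha}(\Omega_+)\times h^{2+\alpha}(\Omega_-)$ depending real-analytically on $(f,h,b)$. Arguing as in Section \ref{Sec:3}, this reduces the system to a fully nonlinear, nonlocal evolution equation $\p_t(f,h)=\Psi(t,(f,h))$, $(f,h)(0)=(f_0,h_0)$, with $\Psi\in C\big([0,T)\times(\V\cap (h^{4+\alpha}(\s))^2),(h^{1+\alpha}(\s))^2\big)$ analytic in its second slot.

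Next I would identify the leading order part of $\p_{(f,h)}\Psi(0,(f_0,h_0))$, following the template of Lemmas \ref{L:NDO}, \ref{L:LOP1} and \ref{L:LOP2}. All contributions not involving curvature inherit the second-order structure from \eqref{eq:S} and can be absorbed by interpolation exactly as in Section \ref{Sec:4}; the dominant new terms come from differentiating $\kappa(f)$ and $\kappa(h)$, producing diffraction/Dirichlet sub-problems whose boundary data on $\Gamma_0$, $\Gamma_1$ are $\gamma_f(1+f_0'^2)^{-3/2}f''$ and $-\gamma_h(1+h_0'^2)^{-3/2}h''$, respectively. After localisation by the $p$-partition of unity from Section~\ref{Sec:5} and freezing of coefficients, the principal parts become Fourier multipliers on $\s$ whose symbols $(\lambda_m)_{m\in\Z}$ can be computed by an ODE analysis parallel to \eqref{FMFR}--\eqref{FMFI} and \eqref{FMHR}--\eqref{FMHI}. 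Boundary data of order $|m|^2$ composed with a Dirichlet-to-Neumann-type map of order $|m|$ forces $\re\lambda_m\sim -c_0|m|^3$ with $c_0>0$ depending only on $\gamma_f,\gamma_h,\mu_\pm$ and the $C^1$-norm of $(f_0,h_0)$. A Marcinkiewicz-type argument as in Lemma \ref{L:2}, combined with a homotopy $\tau\in[0,1]$ deforming the principal part to a pure curvature Dirichlet-to-Neumann operator (the analog of Proposition \ref{L:DN1}) and with the continuation method from Theorem \ref{T:c1}, should then yield
\begin{equation*}
-\p_{(f,h)}\Psi(0,(f_0,h_0))\in\kH\big((h^{4+\alpha}(\s))^2,(h^{1+\alpha}(\s))^2\big),
\end{equation*}
after which the conclusion follows from \cite[Theorems 8.4.1, 8.4.4]{L95} together with the interpolation property \eqref{interpolation} on the pair $\big(h^{4+\alpha}(\s),h^{1+\alpha}(\s)\big)$, exactly as in the end of Section~\ref{Sec:6}.

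The main technical obstacle will be this last symbol analysis. Although the surface tension ensures $\re\lambda_m<0$ for large $|m|$, the imaginary parts also grow cubically in $|m|$ through drift-type contributions proportional to $\tr_{\pm1}\p_x v_\pm^*$ and to $f_0', h_0'$; hence the resolvent quotients $\Lambda_m=(\lambda-\lambda_m)^{-1}$ must be estimated so that $|\lambda|\cdot|\Lambda_m|$ and $|\lambda m|\cdot|\Lambda_{m+1}-\Lambda_m|$ remain uniformly bounded in the partition point, in $\tau\in[0,1]$, and in $\re\lambda\geq\omega$. This is the direct analog of the fractions manipulated in \eqref{EEE1}--\eqref{EEE4} but for third-order symbols, and the heart of the argument will be to verify that the strict positivity of $c_0$, together with the crude bound $|\im\lambda_m|\leq C|m|^3$, suffices to control all of these quotients with no use whatsoever of a Rayleigh--Taylor-type sign condition.
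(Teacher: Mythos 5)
Your high-level blueprint matches the paper's: solve the diffraction problem \eqref{TK} via Theorem~\ref{DP}, recast \eqref{eq:SK}--\eqref{eq:S1} as a fully nonlinear evolution equation for $(f,h)$, split the Fr\'echet derivative into a leading-order part and a lower-order remainder, localize with a $p$-partition of unity, freeze coefficients, and apply the Marcinkiewicz-type multiplier theorem. The correct observation that the curvature makes the principal symbol behave like $-c_0|m|^3$ is also the crux of why no Rayleigh--Taylor condition is needed.

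However, the ``main technical obstacle'' you set yourself up against in the last paragraph is not there, and the way you propose to attack it diverges from the actual proof. Your claim that the imaginary parts of the frozen symbols ``grow cubically in $|m|$ through drift-type contributions proportional to $\tr_{\pm1}\p_xv_\pm^*$'' is wrong: once the curvature is present, the drift terms like $-\frac{k}{\mu_-}\big(\frac{2f_0'\tr_0\p_yv_-^*}{f_0-d}-\tr_0\p_xv_-^*\big)f'$ and the interior sources $\frac{\tr_0\p_yv_\pm^*}{\cdot}f''$ contribute at most $O(|m|)$ and $O(|m|^2)$ respectively, so they are \emph{strictly} lower order and are absorbed by the $\e\|f\|_{4+\alpha}+K\|f\|_{1+\alpha}$-type perturbation estimates (\eqref{Claim4K}, \eqref{Claim5K}). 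The paper's frozen multipliers $\bA_j$ and $\bB_j$ therefore have \emph{purely real} symbols $\wt\lambda_m=-V_f\big(\frac{\tanh(D_+m)}{\beta_2^+D_+m}+\frac{\tanh(D_-m)}{\beta_2^-D_-m}\big)^{-1}m^2$ and $\wt\varphi_m=-\frac{kV_h}{\mu_+}\frac{m^3}{\tanh(Dm)}$, which makes the resolvent estimates easier, not harder, than in Lemma~\ref{L:2}; had you kept the drift in your principal part, the remainder estimates would fail. Correspondingly, the paper dispenses with the $\tau$-homotopy of Sections~\ref{Sec:5}--\ref{Sec:6} entirely (Theorems~\ref{T1K}, \ref{T2K} have no $\tau$), dispenses with the $S_\sigma$/density-approximation machinery of Lemma~\ref{L:Ad1} (the constants $\kappa_i,\omega_i$ depend on $(f_0,h_0,b_0)$ directly), and establishes surjectivity of $\lambda-\p_f\Phi^\pi_1$ by a Fredholm factorization $\p_f\Phi^\pi_1=\Psi_1\circ\Psi_2$ with $\Psi_2 f:=-\gamma_f(1+f_0'^2)^{-3/2}f''$ and $\Psi_1$ the Dirichlet--Neumann operator of Proposition~\ref{L:DN1}, using \cite[Theorem 13.1]{TL80} to conclude index zero. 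Your proposed $\tau$-homotopy back to the Dirichlet--Neumann case could be made to work, but it reintroduces all the bookkeeping the curvature allows one to avoid.
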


In order to prove Theorem \ref{MT2}, we first recast the problem as a nonlinear and nonlocal evolution equation.
To this end,  we infer from the proof of Theorem \ref{DP} that for each pair of functions $(f,h)\in\V\cap (h^{4+\alpha}(\s))^2$ and $b\in h^{2+\alpha}(\s)$
there exists a unique solution $$(v_+,v_-):=(v_+(f,h,b), v_-(f,h,b))\in \mbox{\it{h}}\,^{2+\alpha}(\Omega_+)\times \mbox{\it{h}}\,^{2+\alpha}(\Omega_-)$$
to the diffraction problem 
 \begin{equation}\label{TK}
\left\{
\begin{array}{rllllll}
\A(f,h) v_+\!\!&=&\!\!0&\text{in $ \Omega_+$},\\
\A(f) v_-\!\!&=&\!\!0&\text{in $ \Omega_-$},\\
\B(f,h)v_+-\B(f)v_-\!\!&=&\!\!0 &\text{on $ \Gamma_{0}$},\\
v_+-v_-\!\!&=&\!\!g(\rho_+-\rho_-) f+\gamma_f\kappa(f)&\text{on $ \Gamma_0$},\\
v_+\!\!&=&\!\!g\rho_+h-\gamma_h\kappa(h)&\text{on $ \Gamma_1$},\\
 v_-\!\!&=&\!\!b&\text{on $\G_{-1}$},
\end{array}
\right.
\end{equation}
  the mapping 
 \[\big[(f,h,b)\mapsto (v_+(f,h,b), v_-(f,h,b))\big]\in C^\omega\big(\big(\V\cap (h^{4+\alpha}(\s))^2\big)\times h^{2+\alpha}(\s), \mbox{\it{h}}\,^{2+\alpha}(\Omega_+)\times \mbox{\it{h}}\,^{2+\alpha}(\Omega_-)\big)\]
 being real-analytic  as a consequence of $\kappa\in C^\omega\big(h^{4+\alpha}(\s),h^{2+\alpha}(\s)\big)$.
 Using this observation, we deduce that the problem \eqref{eq:SK} and \eqref{eq:S1} is equivalent to the evolution equation
\begin{equation}\label{AK}
 \p_t(f,h)=\Phi(t,(f,h)),
\end{equation}
where $\Phi:[0,T)\times \big(\V\cap (h^{4+\alpha}(\s))^2\big)\subset \R\times(h^{4+\alpha}(\s))^2\to (h^{1+\alpha}(\s))^2$ is the operator 
$\Phi=(\Phi_1,\Phi_2) $ defined by
\begin{equation}\label{PHIK}
\begin{aligned}
 &\Phi_1(t,(f,h)):=-\B(f)v_-(f,h,b(t)),\\ 
 &\Phi_2(t,(f,h)):=-\B_1(f,h)v_+(f,h,b(t)),
 \end{aligned}
\end{equation}
and where $(v_+,v_-)$ denotes now the  solution operator for \eqref{TK}.
Since we have 
\begin{equation}\label{K1}
\begin{aligned}
&\Phi\in C\big([0,T)\times \big(\V\cap (h^{4+\alpha}(\s))^2\big), (h^{1+\alpha}(\s))^2\big),\\
&\Phi(t,\cdot )\in C^\omega\big(  \V\cap (h^{4+\alpha}(\s))^2, (h^{1+\alpha}(\s))^2\big) \qquad\text{for all $t\in[0,T)$,}
\end{aligned}
\end{equation}
our next goal is to show that
\begin{equation}\label{GK}
 -\p_{(f,h)}\Phi(0,(f_0,h_0))\in\kH((h^{4+\alpha}(\s))^2, (h^{1+\alpha}(\s))^2).
\end{equation}

In the remaining part we let $(v_+,v_-)$ denote the solution to \eqref{TK} determined by the tuple $(f_0,h_0,b_0)$ with $b_0:=b(0)$. 
As in the first part, the derivative $\p_{(f,h)}\Phi(0,(f_0,h_0))$ is a matrix operator, three of its components being given by \eqref{DC}  (with $(f_*,h_*)$ replaced by $(f_0,h_0)$), but with 
$$(w_+[f],w_-[f]):=(\p_fv_+(f_0,h_0,b_0)[f],\p_fv_-(f_0,h_0,b_0)[f])$$ being the solution to the
diffraction problem 
\begin{equation}\label{P2K}
\left\{
\begin{array}{rllllll}
\A(f_0,h_0) w_+[f]\!\!&=&\!\!-\p_f\A(f_0,h_0)[f]v_+ &\text{in $ \Omega_+$},\\
\A(f_0) w_-[f]\!\!&=&\!\!-\p_f\A(f_0)[f] v_-&\text{in $ \Omega_-$},\\
 \B(f_0,h_0)w_+[f]-\B(f_0)w_-[f]\!\!&=&\!\!-\p_f\B(f_0,h_0)[f]v_++\p_f\B(f_0)[f]v_-&\text{on $\Gamma_{0}$},\\
w_+[f]-w_-[f]\!\!&=&\!\!g(\rho_+-\rho_-) f+\gamma_f\p_f\kappa(f_0)[f]&\text{on $ \Gamma_0$},\\
w_+[f]\!\!&=&\!\!0&\text{on $ \Gamma_1$},\\
 w_-[f]\!\!&=&\!\!0&\text{on $\G_{-1}$},
\end{array}
\right.
\end{equation}
 and with  $(W_+[h],W_-[h]):=(\p_hv_+(f_0,h_0,b_0)[h],\p_hv_-(f_0,h_0,b_0)[h])$  solving
\begin{equation}\label{P3K}
\left\{
\begin{array}{rllllll}
\A(f_0,h_0) W_+[h]\!\!&=&\!\!-\p_h\A(f_0,h_0)[h]v_+ &\text{in $\Omega_+$},\\
\A(f_0) W_-[h]\!\!&=&\!\!0&\text{in $ \Omega_-$},\\
 \B(f_0,h_0)W_+[h]-\B(f_0)W_-[h]\!\!&=&\!\!-\p_h\B(f_0,h_0)[h]v_+&\text{on $ \Gamma_{0}$},\\
W_+[h]-W_-[h]\!\!&=&\!\!0&\text{on $ \Gamma_0$},\\
W_+[h]\!\!&=&\!\!g\rho_+h-\gamma_h\p_h\kappa(h_0)[h]&\text{on $ \Gamma_1$},\\
 W_-[h]\!\!&=&\!\!0&\text{on $\G_{-1}$}.
\end{array}
\right.
\end{equation}

In the following we   prove that the diagonal entries of $\p_{(f,h)}\Phi(0,(f_0,h_0))$ generate analytic semigroups when seen as unbounded operators in $h^{1+\alpha}(\s)$ with dense domain $h^{4+\alpha}(\s),$
while one of the off-diagonal entries is in some sense small (see relation \eqref{Claim3K} below).
The analysis is not so involved as in the previous sections: on the one hand, the highest derivatives of $f$ and $h$ in \eqref{P2K} and \eqref{P3K} are hidden in the Fr\'echet derivative of the curvature operators 
(the evolution equation \eqref{AK} has a quasilinear structure now), and, on the other hand, we can rely on previous arguments and computations. Therefore, we do not prove all our statements in detail. 
\bigskip

\noindent{\bf The off-diagonal entry.} 
We claim that for each $\e\in(0,1)$, there exists $K_0=K_0(\e)>0$ such that  
\begin{align}
&\|\p_{h}\Phi_1(0,(f_0,h_0))[h]\|_{1+\alpha}\leq \e\|h\|_{4+\alpha}+K_0\|h\|_{1+\alpha}\qquad\text{for all $h\in h^{2+\alpha}(\s)$}.\label{Claim3K}
\end{align}
Recalling \eqref{DC} and observing that 
\[\p_h\kappa(h_0)[h]=\frac{h''}{(1+h_0'^2)^{3/2}}-\frac{3h_0'h_0''h'}{(1+h_0'^2)^{5/2}}\qquad\text{for all $h\in h^{4+\alpha}(\s)$},\] 
it is useful to split the  solution   $(W_+[h], W_-[h])$  of \eqref{P3K} as
\begin{equation*}
 (W_+[h], W_-[h])=(W_+^{1}, W_-^{1})+(W_+^{2}, W_-^{2}),
\end{equation*}
where $(W_+^{1}, W_-^{1})$ is the solution to 
\begin{equation}\label{P4K}
\left\{
\begin{array}{rllllll}
\A(f_0,h_0) W_+^1\!\!&=&\!\!-\p_h\A(f_0,h_0)[h]v_+ &\text{in $\Omega_+$},\\
\A(f_0) W_-^1\!\!&=&\!\!0&\text{in $ \Omega_-$},\\
 \B(f_0,h_0)W_+^1-\B(f_0)W_-^1\!\!&=&\!\!-\p_h\B(f_0,h_0)[h]v_+&\text{on $ \Gamma_{0}$},\\
W_+^1-W_-^1\!\!&=&\!\!0&\text{on $ \Gamma_0$},\\
W_+^1\!\!&=&\!\!g\rho_+h+3\gamma_hh_0'h_0''(1+h_0'^2)^{-5/2}h'&\text{on $ \Gamma_1$},\\
 W_-^1\!\!&=&\!\!0&\text{on $\G_{-1}$},
\end{array}
\right.
\end{equation} 
while $(W_+^{2}, W_-^{2})$ solves
\begin{equation}\label{P5K}
\left\{
\begin{array}{rllllll}
\A(f_0,h_0) W_+^2\!\!&=&\!\!0 &\text{in $\Omega_+$},\\
\A(f_0) W_-^2\!\!&=&\!\!0&\text{in $ \Omega_-$},\\
 \B(f_0,h_0)W_+^2-\B(f_0)W_-^2\!\!&=&\!\!0&\text{on $ \Gamma_{0}$},\\
W_+^2-W_-^2\!\!&=&\!\!0&\text{on $ \Gamma_0$},\\
W_+^2\!\!&=&\!\!-\gamma_h (1+h_0'^2)^{-3/2}h''&\text{on $ \Gamma_1$},\\
 W_-^2\!\!&=&\!\!0&\text{on $\G_{-1}$}.
\end{array}
\right.
\end{equation} 
In view of \eqref{DC} we get 
\begin{align}
\|\p_{h}\Phi_1(0,(f_0,h_0))[h]\|_{1+\alpha}&\leq  C \big(\|\tr_0\nabla W_-^1\|_{1+\alpha}+\|\tr_0 \nabla W_-^2\|_{1+\alpha}\big)\nonumber\\
&\leq  C \big(\| W_-^1\|_{2+\alpha}^{\0_-}+\|  W_-^2\|_{2+\alpha}^{(1/2)\0_-}\big),\label{I0K}
\end{align}
where we again use the notation $(1/2)\0_-=\s\times(-1/2,0).$ 
Due to \eqref{Schauder} we have
\begin{equation}\label{I1K}
\| W_-^1\|_{2+\alpha}^{\0_-}\leq C\|h\|_{3+\alpha}.
\end{equation}
Finally, we infer from \cite[Theorem 9.3]{ADN64} and the arguments in the proof of Proposition \ref{P:1} that 
\begin{equation}\label{I2K}
 \|W_-^2\|_{2+\alpha}^{(1/2)\0_-}\leq C\big(\|W_+^2\|_{0}^{\0_+}+\|W_-^2\|_0^{\0_-}\big)\leq C\big(\|W_+^2\|_{2+\alpha/2}^{\0_+}+\|W_-^2\|_{2+\alpha/2}^{\0_-}\big)\leq C\|h\|_{4+\alpha/2}
\end{equation}
for all $h\in h^{2+\alpha}(\s).$
The relations \eqref{I0K}-\eqref{I2K} and the interpolation property \eqref{interpolation} lead us the desired estimate \eqref{Claim3K}.\bigskip


\noindent{\bf The first diagonal entry.} 
We start by identifying the ``leading order part''  $ \p_{f}\Phi_1^\pi\in\kL\big(h^{4+\alpha}(\s), h^{1+\alpha}(\s)\big)$ of $\p_{f}\Phi_1(0,(f_0,h_0))$. 
This is defined as 
 \begin{equation*}
 \p_{f}\Phi_1^\pi[f]:= -\B(f_0) w_-^{\pi}[f],\qquad f\in h^{4+\alpha}(\s),
 \end{equation*}
    where $( w_+^{\pi}[f],w_-^{\pi}[f])$  denotes the solution to the problem
 \begin{equation*}
\left\{
\begin{array}{rllllll}
\A^\pi_0(f_0,h_0) w_+^{\pi}[f]\!\!&=&\!\!0 &\text{in $ \Omega_+$},\\
\A^\pi_0(f_0) w_-^{\pi}[f]\!\!&=&\!\!0&\text{in $ \Omega_-$},\\
 \B(f_0,h_0)w_+^{\pi}[f]-\B(f_0)w_-^{\pi}[f]\!\!&=&\!\!0&\text{on $ \Gamma_{0}$},\\
w_+^{\pi}[f]-w_-^{\pi}[f]\!\!&=&\!\!\gamma_f(1+f_0'^2)^{-3/2} f'' &\text{on $\Gamma_0$},\\
w_+^{\pi}[f]\!\!&=&\!\!0&\text{on $ \Gamma_1$},\\
 w_-^{\pi}[f]\!\!&=&\!\!0&\text{on $\G_{-1}$}.
\end{array}
\right.
\end{equation*}
Here, $\A^\pi_0(f_0)$ and $\A^\pi_0(f_0,h_0)$ are the operators defined by \eqref{eqs} (with $(f_*,h_*)$ replaced by $(f_0,h_0)$).
The same arguments as in the proof of Lemma \ref{L:LOP1} show that, given $\e\in(0,1)$, there exists $K_1=K_1(\e)>0$ such that  
\begin{align}
&\|\p_{f}\Phi_1(0,(f_0,h_0))[f]- \p_{f}\Phi_1^\pi[f]\|_{1+\alpha}\leq \e\|f\|_{4+\alpha}+K_1\|f\|_{1+\alpha}\qquad\text{for all $f\in h^{4+\alpha}(\s)$}.\label{Claim4K}
\end{align}

\begin{thm}\label{T1K} Let   $\mu>0$ and  $\alpha'\in(0,\alpha)$ be given.
Then there exist an integer $p\geq 3$, a $p$-partition of unity $\{\Pi_j^p\}_{1\leq j\leq 2^{p+1}}$, and a constant $K_2=K_2(p)$, and  for each $1\leq j\leq 2^{p+1}$ there 
are bounded operators $\bA_{j}\in\kL\big(h^{4+\alpha}(\s), h^{1+\alpha}(\s)\big)$
 such that 
 \begin{equation*} 
  \|\Pi_j^p\p_f\Phi^\pi_{1}[f]-\bA_{j}[\Pi^p_j f]\|_{1+\alpha}\leq \mu \|\Pi_j^pf\|_{4+\alpha}+K_2\|f\|_{4+\alpha'}
 \end{equation*}
 for all $f\in h^{4+\alpha}(\s)$.
 The operators $\bA_{j}$ are defined  by the formula
  \begin{equation*} 
  \bA_{j}[f]:=  - \frac{k}{\mu_-}\Big(\frac{1+f_0'^2}{ f_0-d}\Big|_{x_j^p}\tr_0 \p_yw_-^{\pi,j}[f]-f'_0(x_j^p)\tr_0 \p_xw_-^{\pi,j}[f]\Big),
 \end{equation*}
 where 
 $( w_+^{\pi,j}[f], w_-^{\pi,j}[f])$ denotes the solution to the problem
 \begin{equation*} 
\left\{
\begin{array}{rllllll}
\A^\pi_{0,j}(f_0,h_0) w_+^{\pi,j}[f]\!\!&=&\!\!0&\text{in $ \Omega_+$},\\
\A^\pi_{0,j}(f_0) w_-^{\pi,j}[f]\!\!&=&\!\!0&\text{in $ \Omega_-$},\\
 \B_j(f_0,h_0)w_+^{\pi,j}[f]-\B_j(f_0)w_-^{\pi,j}[f]\!\!&=&\!\!0&\text{on $ \Gamma_{0}$},\\
w_+^{\pi,j}[f]-w_-^{\pi,j}[f]\!\!&=&\!\! V_f f''&\text{on $\Gamma_0$},\\
w_+^{\pi,j}[f]\!\!&=&\!\!0&\text{on $ \Gamma_1$},\\
 w_-^{\pi,j}[f]\!\!&=&\!\!0&\text{on $\G_{-1}$}.
\end{array}
\right.
\end{equation*}
The operators $\A^\pi_{0,j}(f_0,h_0),$ $ \A^\pi_{0,j}(f_0),$ $ \B_j(f_0,h_0),$ and $\B_j(f_0)$ are the same as in Theorem \ref{T1} (with $(f_*,h_*)$ replaced by $(f_0,h_0)$) and
$V_f:=\gamma_f (1+f_0'^2(x_j^p))^{-3/2}.$
\end{thm}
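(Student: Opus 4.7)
\medskip\noindent
\textbf{Proof plan.}
The strategy is to mirror the argument used for Theorem~\ref{T1}, with two structural modifications dictated by the surface tension term. First, the Dirichlet jump across $\Gamma_0$ in the problem defining $(w_+^\pi[f],w_-^\pi[f])$ is now the second-derivative quantity $\gamma_f(1+f_0'^2)^{-3/2}f''$; accordingly, Corollary~\ref{C:1} yields $\|w_\pm^\pi[f]\|_{2+\alpha}^{\0_\pm}\le C\|f\|_{4+\alpha}$, and the argument variable naturally lives in $h^{4+\alpha}(\s)$. Second, since $\p_f\Phi_1^\pi$ has no first-order part in $f'$, the analogue of the term $T_1$ from the proof of Theorem~\ref{T1} is absent. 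Writing $\B(f_0)w=\tfrac{k}{\mu_-}\bigl(\tfrac{1+f_0'^2}{f_0-d}\tr_0\p_yw-f_0'\tr_0\p_xw\bigr)$, I decompose
\[
\Pi_j^p\p_f\Phi_1^\pi[f]-\bA_j[\Pi_j^pf]=T_2+T_3,
\]
where $T_2$ and $T_3$ gather the $\tr_0\p_y$ and $\tr_0\p_x$ contributions, in full analogy with the corresponding pieces of Theorem~\ref{T1}.

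\medskip\noindent
Each of $T_2,T_3$ is further split by introducing a smooth cut-off $\chi_j^p$ with $\chi_j^p\Pi_j^p=\Pi_j^p$ and $\supp\chi_j^p$ a small enlargement of $\supp\Pi_j^p$. The ``outer'' factor $(1-\chi_j^p)$ produces terms controlled by a Schauder estimate on the diffraction problem satisfied by $(1-\chi_j^p)(w_+^{\pi,j}[\Pi_j^pf],w_-^{\pi,j}[\Pi_j^pf])$: since $(1-\chi_j^p)\Pi_j^p=0$, the principal data on $\Gamma_0$ vanish and only commutator terms remain, yielding a bound $K\|f\|_{4+\alpha'}$. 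The ``inner'' piece reduces, modulo coefficient-freezing errors in the functions multiplying $\tr_0\p_y$ and $\tr_0\p_x$ (small in $C^0$ uniformly on $\supp\chi_j^p$ as $p\to\infty$), to estimating $\|\chi_j^p(w_-^{\pi,j}[\Pi_j^pf]-\Pi_j^pw_-^\pi[f])\|_{2+\alpha}^{\0_-}$. The pair $(z_+,z_-):=\chi_j^p\bigl(w_+^{\pi,j}[\Pi_j^pf]-\Pi_j^pw_+^\pi[f],\,w_-^{\pi,j}[\Pi_j^pf]-\Pi_j^pw_-^\pi[f]\bigr)$ solves a diffraction problem whose right-hand sides gather: (i) commutator terms involving $(\Pi_j^p)',(\Pi_j^p)'',(\chi_j^p)',(\chi_j^p)''$ applied to $w_\pm^\pi[f]$, which are strictly lower order; (ii) coefficient-freezing differences $(\A^\pi_0-\A^\pi_{0,j})[\chi_j^pw_\pm^{\pi,j}[\Pi_j^pf]]$ and analogues for $\B$, whose $C^0$ coefficients on $\supp\chi_j^p$ are uniformly small for $p$ large; and (iii) the crucial new contribution $-\Pi_j^p\bigl(\gamma_f(1+f_0'^2)^{-3/2}-V_f\bigr)f''$ (plus commutators) to the Dirichlet jump on $\Gamma_0$. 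Applying Corollary~\ref{C:1} to $z_\pm$, summing over $1\le j\le 2^{p+1}$ via Remark~\ref{R:1} (with $k=4$), and using interpolation~\eqref{interpolation}, yields the desired estimate.

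\medskip\noindent
\textbf{Main difficulty.} The principal technical point without counterpart in Theorem~\ref{T1} is term~(iii): the Dirichlet datum on $\Gamma_0$ carries two derivatives of $f$, so the Schauder estimate in Corollary~\ref{C:1} requires its $h^{2+\alpha}(\s)$-norm, whereas the coefficient $\chi_j^p\bigl(\gamma_f(1+f_0'^2)^{-3/2}-V_f\bigr)$ can only be made uniformly small in $C^0$ by choosing $p$ large; its higher H\"older norms depend on~$p$. The resolution is a Leibniz expansion of the product, which bounds the ``diagonal'' term with $f''$ by the small $C^0$-factor times $\|\Pi_j^pf\|_{4+\alpha}$, while the remaining terms carry derivatives of the bounded-but-not-small coefficient and can be absorbed into $K(p)\|f\|_{4+\alpha'}$ via~\eqref{interpolation}. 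This is conceptually the same mechanism as in Theorem~\ref{T1}, only shifted one derivative higher.
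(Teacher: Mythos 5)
Your proposal is correct and matches the approach the paper intends: the paper omits the proof as ``similar to that of Theorem~\ref{T1}'', and your adaptation --- dropping the $T_1$-type term because $\p_f\Phi_1^\pi$ now has no first-order multiplier in $f'$, propagating the derivative shift ($f\mapsto f''$ in the Dirichlet datum on $\Gamma_0$, hence $h^{4+\alpha}(\s)\to h^{1+\alpha}(\s)$) through the Schauder estimates of Corollary~\ref{C:1}, and using the Leibniz expansion together with interpolation \eqref{interpolation} to isolate the small $C^0$-factor in front of $\|\Pi_j^pf\|_{4+\alpha}$ --- is precisely how the omitted proof proceeds. Two cosmetic points only: the shift in the $\Gamma_0$-datum is by two derivatives ($f$ to $f''$), not one, and the summation over $j$ via Remark~\ref{R:1} belongs to the downstream generation result (Theorem~\ref{T:c1K}) rather than to the per-$j$ estimate of Theorem~\ref{T1K} itself.
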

\begin{proof}
 The proof is similar to that of Theorem \ref{T1} and is therefore omitted.
\end{proof}

From the computations in Section \ref{Sec:5} one infers  that, given a $p$-partition of unity $\{\Pi_j^p\}_{1\leq j\leq 2^{p+1}}$ and $1\leq j\leq 2^{p+1}$, the operator  $\bA_{j}$ is  a Fourier multiplier of the form 
\begin{align*} 
 \bA_{j}\Big[\sum_{m\in\Z} f_me^{imx}\Big]=&\sum_{m\in\Z} \wt\lambda_m f_me^{imx},
 \end{align*}
the (real) symbol $(\wt\lambda_m)_m$  being given by
\begin{align*} 
\wt\lambda_m
 =& -  V_f\Big(\frac{\tanh(D_+m)}{\beta_2^+D_+m}+\frac{\tanh(D_-m)}{\beta_2^-D_-m}\Big)^{-1} m^2 ,
 \end{align*}
cf. \eqref{FMFR} and \eqref{FMFI}. 
Here, $\beta_2^\pm$ and $D_\pm$ are the positive constants defined in Section \ref{Sec:5} (with $(f_*,h_*)$ replaced by $(f_0,h_0)$).

\begin{lemma}\label{L:2K}
Given  $(f_0,h_0)\in\V\cap (h^{4+\alpha}(\s))^2$  and $b_0\in h^{2+\alpha}(\s)$, there exist  constants  $\kappa_1\geq 1$ and $\omega_1>0$ depending only on $(f_0,h_0,b_0)$  
such that for any $p$-partition of unity $\{\Pi_j^p\}_{1\leq j\leq 2^{p+1}}$ with $p\geq 3,$ 
the operators  $\bA_{j}$,     $1\leq j\leq 2^{p+1}$,    satisfy
 \begin{align*} 
&\lambda-\bA_{ j}\in{\rm Isom}(h^{4+\alpha}(\s),h^{1+\alpha}(\s)),\\[1ex]
& \kappa_1\|(\lambda-\bA_{ j})[f]\|_{1+\alpha}\geq  |\lambda|\cdot\|f\|_{1+\alpha}+\|f\|_{4+\alpha}
\end{align*}
for all $f\in h^{4+\alpha}(\s)$ and $\lambda\in\C$ with $\re \lambda\geq \omega_1$.
\end{lemma}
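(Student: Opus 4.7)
The plan is to follow the template of Lemma~\ref{L:2}, but now with a real symbol whose growth is cubic rather than linear, which matches the gap between the domain $h^{4+\alpha}(\s)$ and the target $h^{1+\alpha}(\s)$.

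\textbf{Step 1: Asymptotics and sign of the symbol.} I would first observe that $V_f > 0$ (since $\gamma_f > 0$) and that $\tanh(D_\pm m)/(D_\pm m) = \mathrm{sign}(m)/(D_\pm |m|) + O(e^{-2D_\pm|m|})$ as $|m|\to\infty$. Plugging into the explicit formula for $\wt\lambda_m$, one reads off that $\wt\lambda_m\leq 0$, $\wt\lambda_0 = 0$, and
\[
\wt\lambda_m \sim -V_f\left(\frac{1}{\beta_2^+ D_+}+\frac{1}{\beta_2^- D_-}\right)^{-1}|m|^3\qquad\text{as }|m|\to\infty.
\]
By the compactness of $\s$ and the continuity of $\beta_2^\pm, D_\pm, V_f$ as functions of the reference point $x_j^p$, there exists a constant $C_0 = C_0(f_0,h_0,b_0)>1$, independent of the partition, such that $C_0^{-1}|m|^3 \leq -\wt\lambda_m \leq C_0 |m|^3$ and $|\wt\lambda_{m+1}-\wt\lambda_m|\leq C_0|m|^2$ for all $m\in\Z\setminus\{0\}$. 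The difference bound comes from writing $\wt\lambda_m$ as a smooth function of $m\in\R$ and applying the mean-value theorem.

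\textbf{Step 2: Marcinkiewicz multiplier estimates.} Set $\omega_1 := 2 C_0$. For $\lambda = \zeta_1 + i\zeta_2$ with $\zeta_1\geq \omega_1$, the candidate inverse of $\lambda - \bA_j$ is the Fourier multiplier with symbol $\Lambda_m = (\lambda - \wt\lambda_m)^{-1}$. In order to invoke \cite[Theorem~2.1]{JL12} as a map from $h^{1+\alpha}(\s)$ to $h^{4+\alpha}(\s)$, I would verify
\[
\sup_{\re\lambda\geq\omega_1}\sup_{x_j^p\in\s}\sup_{m\in\Z\setminus\{0\}}\bigl(|m|^3|\Lambda_m|+|m|^4|\Lambda_{m+1}-\Lambda_m|\bigr)<\infty.
\]
The first summand is immediate from $|\lambda-\wt\lambda_m|\geq -\wt\lambda_m\geq C_0^{-1}|m|^3$. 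For the second I would use the telescoping identity
\[
\Lambda_{m+1}-\Lambda_m = \frac{\wt\lambda_{m+1}-\wt\lambda_m}{(\lambda-\wt\lambda_m)(\lambda-\wt\lambda_{m+1})}
\]
together with Step~1. A density argument as in the proof of Lemma~\ref{L:2} then gives \eqref{13h} and the $\|f\|_{4+\alpha}$-part of \eqref{14h}.

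\textbf{Step 3: Spectral-parameter estimate.} To capture the $|\lambda|\cdot\|f\|_{1+\alpha}$ contribution in \eqref{14h} I would additionally show
\[
\sup_{\re\lambda\geq\omega_1}\sup_{x_j^p\in\s}\sup_{m\in\Z\setminus\{0\}}|\lambda|\bigl(|\Lambda_m|+|m|\,|\Lambda_{m+1}-\Lambda_m|\bigr)<\infty,
\]
exactly as in the second half of the proof of Lemma~\ref{L:2}. The inequality $\zeta_1^2/(\zeta_1-\wt\lambda_m)^2\leq 4$ gives the real-part estimate; since here $\wt\lambda_m\in\R$, the imaginary-part estimate reduces to $\zeta_2^2/(\zeta_1-\wt\lambda_m)^2\leq 4\zeta_2^2/\zeta_1^2\leq 4\zeta_2^2/\omega_1^2$, which is controlled by $|\lambda|^2|\Lambda_m|^2$ combined with the bound $|m|^3|\Lambda_m|\leq C$ from Step~2. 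Absence of convective (imaginary) contributions to $\wt\lambda_m$ makes this step noticeably simpler than its counterpart in Lemma~\ref{L:2}, so I expect no serious obstacle; the only care needed is to track the correct cubic weights throughout the Marcinkiewicz calculation and to confirm that \cite[Theorem~2.1]{JL12} applies in the required smoothing range $r=1,\,s=4$.
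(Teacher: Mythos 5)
Your overall approach is exactly what the paper has in mind: the paper's proof of Lemma~\ref{L:2K} is the single sentence ``This follows analogously as in the proof of Lemma~\ref{L:2},'' and you have correctly identified what ``analogous'' means -- the same Marcinkiewicz multiplier argument, now with cubic weights $|m|^3$, $|m|^4$ to match the gap between $h^{4+\alpha}(\s)$ and $h^{1+\alpha}(\s)$, and with all the uniformity over $x_j^p$ coming from compactness of $\s$ and continuity of the frozen coefficients. Step~1 (sign and cubic growth of $\wt\lambda_m$, MVT for the differences) and Step~2 (telescoping identity for $\Lambda_{m+1}-\Lambda_m$, range $r=1$, $s=4$) are exactly right; the telescoping identity is in fact a cleaner route than the $T_1,\dots,T_4$ decomposition the paper uses for Lemma~\ref{L:2}, and it is available here precisely because $\wt\lambda_m$ is real.

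The one flaw is the chain of inequalities in Step~3 controlling $|\lambda|^2|\Lambda_m|^2$. You write ``$\zeta_2^2/(\zeta_1-\wt\lambda_m)^2\leq 4\zeta_2^2/\zeta_1^2\leq 4\zeta_2^2/\omega_1^2$, which is controlled by $|\lambda|^2|\Lambda_m|^2 \dots$'' -- the middle quantity $4\zeta_2^2/\omega_1^2$ is unbounded in $\zeta_2$, and invoking $|\lambda|^2|\Lambda_m|^2$ to control something is circular since that is the quantity you are trying to bound. The fix is elementary and should replace the garbled step: since $\wt\lambda_m\in\R$ and $\wt\lambda_m\leq 0$, the denominator $|\lambda-\wt\lambda_m|^2=(\zeta_1-\wt\lambda_m)^2+\zeta_2^2$ already contains the full $\zeta_2^2$, so
\[
|\lambda|^2|\Lambda_m|^2=\frac{\zeta_1^2+\zeta_2^2}{(\zeta_1-\wt\lambda_m)^2+\zeta_2^2}\leq\frac{\zeta_1^2}{(\zeta_1-\wt\lambda_m)^2}+\frac{\zeta_2^2}{\zeta_2^2}\leq 1+1=2,
\]
using $\zeta_1-\wt\lambda_m\geq\zeta_1$. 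In other words, the imaginary part of the symbol being absent makes Step~3 even more trivial than what you wrote; no appeal to $|m|^3|\Lambda_m|\leq C$ is needed for this term. The estimate for $|\lambda m|\cdot|\Lambda_{m+1}-\Lambda_m|$ then follows from the telescoping identity, $|\lambda|\cdot|\Lambda_m|\leq\sqrt 2$, the bound $|\wt\lambda_{m+1}-\wt\lambda_m|\leq C_0|m|^2$, and $|m|^3|\Lambda_{m+1}|\leq C_0$. With that correction the proposal is sound.
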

\begin{proof}
This follows analogously as in the  proof of Lemma \ref{L:2}.
\end{proof}

We are now in the position to prove the generator property for $\p_{f}\Phi_1(0,(f_0,h_0)).$
\begin{thm}\label{T:c1K}
 If  $(f_0,h_0)\in\V\cap (h^{4+\alpha}(\s))^2$  and $b_0\in h^{2+\alpha}(\s)$, then
 \begin{align*}
 -\p_{f}\Phi_1(0,(f_0,h_0))\in\kH\big(h^{4+\alpha}(\s), h^{1+\alpha}(\s)\big).
\end{align*}
\end{thm}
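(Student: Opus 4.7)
The plan is to mirror the proof of Theorem~\ref{T:c1}, exploiting the three technical ingredients that have already been put in place for the surface--tension setting: the localization estimate~\eqref{Claim4K}, the Fourier--multiplier approximation of Theorem~\ref{T1K}, and the uniform resolvent bound of Lemma~\ref{L:2K}. The only new feature, compared with Section~\ref{Sec:5}, is a three-derivative (rather than one-derivative) loss induced by the curvature terms; everything else is parallel.

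First, estimate~\eqref{Claim4K} together with the perturbation result \cite[Theorem I.1.3.1(ii)]{Am95} reduces the claim to showing $-\p_f\Phi_1^\pi\in\kH(h^{4+\alpha}(\s),h^{1+\alpha}(\s))$. Letting $\kappa_1,\omega_1$ denote the constants furnished by Lemma~\ref{L:2K}, I would apply Theorem~\ref{T1K} with $\mu=1/(2\kappa_1)$ to obtain a $p$-partition of unity $\{\Pi_j^p\}_j$ and a constant $K_2$ controlling the error between $\Pi_j^p\p_f\Phi_1^\pi[f]$ and $\bA_j[\Pi_j^p f]$. Combining this approximation with the uniform bound on $\lambda-\bA_j$ from Lemma~\ref{L:2K} via the triangle inequality, then using Remark~\ref{R:1} to replace the partition-localized norm by the full norm, the interpolation property~\eqref{interpolation}, and Young's inequality to absorb the $\|f\|_{4+\alpha'}$-term, one arrives (precisely as in the derivation of~\eqref{FE1}) at constants $\tilde\kappa_1\ge 1$ and $\tilde\omega_1>0$ satisfying
\[
\tilde\kappa_1\,\|(\lambda-\p_f\Phi_1^\pi)[f]\|_{1+\alpha}\ \ge\ |\lambda|\,\|f\|_{1+\alpha}+\|f\|_{4+\alpha}\qquad(\re\lambda\ge\tilde\omega_1).
\]

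To promote this a priori bound to the full generator property, I would introduce a homotopy $\tau\mapsto \p_f\Phi_{1,\tau}^\pi\in\kL(h^{4+\alpha}(\s),h^{1+\alpha}(\s))$ joining $\p_f\Phi_{1,1}^\pi:=\p_f\Phi_1^\pi$ to a frozen-coefficient reference operator at $\tau=0$ obtained by evaluating all variable coefficients of the diffraction problem defining $w_\pm^\pi[f]$ at a single fixed point of $\s$. This reference operator is a pure Fourier multiplier whose symbol is of the form $\wt\lambda_m=-V_f\bigl(\tanh(D_+m)/(\beta_2^+D_+m)+\tanh(D_-m)/(\beta_2^-D_-m)\bigr)^{-1}m^2$ displayed after Theorem~\ref{T1K}; since $V_f=\gamma_f(1+f_0'^2)^{-3/2}>0$ at the frozen point, $\re\wt\lambda_m<0$ and hence $\lambda-\p_f\Phi_{1,0}^\pi$ is invertible for $\re\lambda\ge\tilde\omega_1$ by direct inspection. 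The same machinery underlying Theorem~\ref{T1K} and Lemma~\ref{L:2K} applies to the entire family $\{\p_f\Phi_{1,\tau}^\pi\}_\tau$ with uniform constants, so the method of continuity \cite[Theorem~5.2]{GT01} transports surjectivity from $\tau=0$ to $\tau=1$ and the proof concludes.

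The main obstacle is the construction of a homotopy that retains two features simultaneously: uniformity of the resolvent estimate along $\tau$, and an explicitly solvable endpoint. In Section~\ref{Sec:5} these were engineered by building the $\tau$-family into Theorem~\ref{T1} and relying on Proposition~\ref{L:DN1} (a first-order Dirichlet--Neumann map) at $\tau=0$. Here the endpoint is a third-order object because the Dirichlet datum $w_+^\pi-w_-^\pi=\gamma_f(1+f_0'^2)^{-3/2}f''$ acts as a Wentzell-type boundary condition; the payoff, however, is that positivity of $V_f$ is automatic from $\gamma_f>0$ and does not require any Rayleigh--Taylor condition, which is precisely why no sign assumption on the initial data enters Theorem~\ref{MT2}: the surface-tension term alone supplies the parabolicity.
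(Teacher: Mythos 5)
Your route and the paper's coincide up to and including the a priori estimate: both use \eqref{Claim4K} with the perturbation result \cite[Theorem I.1.3.1 (ii)]{Am95} to reduce to $\p_f\Phi_1^\pi$, and then assemble Theorem~\ref{T1K}, Lemma~\ref{L:2K}, Remark~\ref{R:1}, interpolation, and Young's inequality to obtain constants $\wt\kappa_1,\wt\omega_1$ with
\[
\wt\kappa_1\,\|(\lambda-\p_f\Phi_1^\pi)[f]\|_{1+\alpha}\geq |\lambda|\cdot\|f\|_{1+\alpha}+\|f\|_{4+\alpha},\qquad \re\lambda\geq\wt\omega_1.
\]
Where you part ways is the surjectivity step. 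You propose a homotopy $\tau\mapsto\p_f\Phi_{1,\tau}^\pi$ that linearly deforms the coefficients of the diffraction problem to their values at a single frozen point and then invoke the method of continuity. The paper instead proves directly that $\p_f\Phi_1^\pi$ is a Fredholm operator of index zero, by factoring $\p_f\Phi_1^\pi=\Psi_1\circ\Psi_2$ with $\Psi_2 f=-\gamma_f(1+f_0'^2)^{-3/2}f''$ (a product of multiplication by a nowhere-vanishing function with $\partial_x^2$, hence Fredholm of index zero) and $\Psi_1$ a Dirichlet--Neumann type operator covered by Proposition~\ref{L:DN1}, hence also Fredholm of index zero; since $\lambda\mathrm{id}:h^{4+\alpha}\to h^{1+\alpha}$ is compact, $\lambda-\p_f\Phi_1^\pi$ inherits index zero, and injectivity from the a~priori estimate then forces bijectivity.

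Your continuity argument has a genuine hole as written: the method of continuity requires the resolvent bound \emph{uniformly in $\tau$}, but Theorem~\ref{T1K} and Lemma~\ref{L:2K} as proved in the paper contain no $\tau$-family at all (unlike Section~\ref{Sec:5}, where Theorem~\ref{T1} and Lemma~\ref{L:2} were built around one precisely for this purpose). Asserting that ``the same machinery applies to the entire family with uniform constants'' is not a proof; you would have to restate and reprove a $\tau$-parameterized version of both results, and in addition your $\tau=0$ endpoint (all coefficients frozen at one point $x_0$) is structurally different from the paper's Section~\ref{Sec:5} endpoint (a \emph{variable-coefficient} Dirichlet--Neumann map with lower-order terms stripped away). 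The estimates are in fact likely to hold uniformly — convex combinations preserve uniform ellipticity and the positivity of $V_f$, $D_\pm$, $\beta_2^\pm$ — but that is extra work the paper deliberately avoids. The Fredholm factorization is noticeably more economical here: it imports Proposition~\ref{L:DN1} verbatim, adds no new uniform estimates beyond the one already proved, and isolates cleanly the reason no Rayleigh--Taylor condition is needed, namely that $\gamma_f(1+f_0'^2)^{-3/2}>0$ automatically makes $\Psi_2$ Fredholm of index zero.
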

\begin{proof}
Because of  \eqref{Claim4K} and   \cite[Theorem I.1.3.1 (ii)]{Am95}, we only need to prove that  $-\p_f\Phi^\pi_1$  is an analytic generator.
 The arguments in the proof of Theorem \ref{T:c1} together with  Remark \ref{R:1}, Theorem \ref{T1K}, and Lemma \ref{L:2K}  show there exist constants $\wt\kappa_1>0$  and  $\wt \omega_1>1$ such that 
\begin{align}\label{FE1K}
 \wt\kappa_1\|(\lambda-\p_f\Phi^\pi_1)[f]\|_{1+\alpha}\geq&   |\lambda|\cdot\|f\|_{1+\alpha}+\|f\|_{4+\alpha} 
\end{align}
for all $\lambda\in\C$ with $\re \lambda \geq\wt \omega_1$ and  $f\in h^{4+\alpha}(\s)$.
As $(\lambda-\p_f\Phi^\pi_{1})$ is one-to-one  for $\re\lambda\geq\wt \omega_1$ by \eqref{FE1K}, we are left to show that  
   $(\lambda-\p_f\Phi^\pi_{1})$ is a Fredholm operator of index zero for $\lambda\geq\wt \omega_1$, see \cite[Remark I.1.2.1]{Am95}.

   We can decompose $\p_f\Phi^\pi_{1}$ as 
$\p_f\Phi^\pi_{1}=\Psi_1\circ\Psi_2$, 
where $\Psi_2:h^{4+\alpha}(\s)\to h^{2+\alpha}(\s)$  is defined by
\begin{equation*}
\Psi_2f:= -\cfrac{\gamma_f }{(1+f_0'^2)^{3/2}}f'',
\end{equation*}
and $\Psi_1:h^{2+\alpha}(\s)\to h^{1+\alpha}(\s)$ is the operator obtained from $\p_f\Phi^\pi_{1,0}[f]$ when choosing $\Delta_\rho=1$, cf. \eqref{HO1} and \eqref{HO1a}.
A simple consequence of Proposition  \ref{L:DN1} is that $\Psi_1$ is a Fredholm operator of index zero, the same Fredholm property being valid also for $\Psi_2.$
By a classical result \cite[Theorem 13.1]{TL80} we then know that also $\p_f\Phi^\pi_{1}=\Psi_1\circ\Psi_2$  is also a Fredholm operator of index zero.
This property allows us to conclude that $(\lambda-\p_f\Phi^\pi_{1})$ is bijective for all $\lambda\geq\wt \omega_1$, and to finish the proof.
 \end{proof}

\begin{rem}\label{R:1K}
  Let $(f_*,h_*) \in\V$  be given and $a\in h^{2+\alpha}(\s)$ be a negative function. Moreover, let $\bA$ be  the operator defined by   \eqref{HO1}.
 Then,  
 $${-\bA\circ \partial_x^2}\in\kH\big(h^{4+\alpha}(\s), h^{1+\alpha}(\s)\big).$$
\end{rem}\bigskip

\noindent{\bf The second diagonal entry.} 
We define the operator $\p_{h}\Phi_2^\pi\in\kL\big(h^{4+\alpha}(\s), h^{1+\alpha}(\s)\big)$  by the formula
 \begin{equation*} 
 \p_{h}\Phi_2^\pi[h]:= -\B_1(f_0, h_0) W_+^{\pi}[h],\qquad h\in h^{4+\alpha}(\s),
 \end{equation*}
 where  $ W_+^{\pi}[h]$  denotes the solution to the problem
 \begin{equation*} 
\left\{
\begin{array}{rllllll}
\A^\pi_1(f_0,h_0) W_+^{\pi}[h]\!\!&=&\!\!0&\text{in $\Omega_+$},\\
W_+^{\pi}[h]\!\!&=&\!\!0&\text{on $ \Gamma_0$},\\
W_+^{\pi}[h]\!\!&=&\!\!-\gamma_h (1+h_0'^2)^{-3/2}h'' &\text{on $ \Gamma_1$},
\end{array}
\right.
\end{equation*}
 and with $ \A^\pi_1(f_0,h_0)$ defined by \eqref{eqs2} (with $(f_*,h_*)$ replaced by $(f_0,h_0)$).
The operator $\p_{h}\Phi_2^\pi$ is the  ``leading order'' part of the Fr\'echet derivative $\p_{h}\Phi_2(0,(f_0,h_0))$ in the sense that, given $\e\in(0,1)$, there exists $K_3=K_3(\e)>0$ such that  
\begin{align}
&\|\p_{h}\Phi_2(0,(f_0,h_0))[h]-\p_{h}\Phi_2^\pi[h]\|_{1+\alpha}\leq \e\|h\|_{4+\alpha}+K_3\|h\|_{1+\alpha}\qquad\text{for all $h\in h^{4+\alpha}(\s)$}.\label{Claim5K}
\end{align}
 The estimate \eqref{Claim5K} is obtained by arguing as in Lemma \ref{L:LOP2}.

\begin{thm}\label{T2K} Let   $\mu>0$ and  $\alpha'\in(0,\alpha)$ be given.
Then there exist an integer $p\geq 3$, a $p$-partition of unity $\{\Pi_j^p\}_{1\leq j\leq 2^{p+1}}$, and a constant $K_4=K_4(p)$, and for each $1\leq j\leq 2^{p+1}$ 
there are bounded operators $\bB_{j}\in\kL\big(h^{4+\alpha}(\s), h^{1+\alpha}(\s)\big)$
 such that 
 \begin{equation*} 
  \|\Pi_j^p\p_h\Phi^\pi_{2}[h]-\bB_{j}[\Pi^p_j h]\|_{1+\alpha}\leq \mu \|\Pi_j^ph\|_{4+\alpha}+K_4\|h\|_{4+\alpha'} 
 \end{equation*}
 for all $h\in h^{4+\alpha}(\s)$. The operators $\bB_{ j}$ are defined  by the formula
  \begin{equation*} 
 \bB_{ j}[h]:= - \frac{k}{\mu_+}\Big(\frac{1+h_0'^2}{h_0-f_0}\Big|_{x_j^p}\tr_1 \p_yW_{+}^{\pi,j}[h]-h'_0(x_j^p)\tr_1 \p_xW_{+}^{\pi,j}[h]\Big),
 \end{equation*}
 where 
 $ W_{+}^{\pi,j}[h]$ denotes the solution to the problem
  \begin{equation*} 
\left\{
\begin{array}{rllllll}
\A^\pi_{1,j}(f_0,h_0) W_{+}^{\pi,j}[h]\!\!&=&\!\!0&\text{in $\Omega_+$},\\
W_{+}^{\pi,j}[h]\!\!&=&\!\!0&\text{on $ \Gamma_0$},\\
W_{+}^{\pi,j}[h]\!\!&=&\!\!-V_hh''&\text{on $ \Gamma_1$},
\end{array}
\right.
\end{equation*}
with $\A^\pi_{1,j}(f_0,h_0)$  as in Theorem \ref{T2} (with $(f_*,h_*)$ replaced by $(f_0,h_0)$) and
$V_h:=\gamma_h (1+h_0'^2(x_j^p))^{-3/2}.$
\end{thm}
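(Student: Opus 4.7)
The plan is to adapt the localization scheme of Theorem~\ref{T2}, which becomes considerably simpler here: there is no continuation parameter $\tau$, and the problem defining $W_+^\pi[h]$ is an elliptic Dirichlet problem on $\Omega_+$ alone, not a diffraction problem. Given $\mu>0$ and $\alpha'\in(0,\alpha)$, I would fix $p\ge 3$, a $p$-partition of unity $\{\Pi_j^p\}_{1\le j\le 2^{p+1}}$, and a corresponding cutoff family $\{\chi_j^p\}$ with $\chi_j^p\Pi_j^p=\Pi_j^p$; $p$ will be taken large enough at the end. Using the explicit form of $\B_1(f_0,h_0)$ together with \eqref{HOjb}, I would decompose
\[
\Pi_j^p\,\p_h\Phi_2^\pi[h]-\bB_j[\Pi_j^p h]=S_1+S_2,
\]
where $S_1$ collects the errors from freezing the coefficients of $\B_1$ at $x_j^p$ (acting on $\Pi_j^p W_+^\pi[h]$ rather than on $W_+^\pi[h]$), and $S_2$ is the main term
\[
S_2:=-\frac{k}{\mu_+}\Big(\tfrac{1+h_0'^2}{h_0-f_0}\big|_{x_j^p}\tr_1\p_y-h_0'(x_j^p)\tr_1\p_x\Big)\chi_j^p\bigl(W_+^{\pi,j}[\Pi_j^p h]-\Pi_j^p W_+^\pi[h]\bigr).
\]

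The estimate for $S_1$ proceeds exactly as for the $S_3$-term in Theorem~\ref{T2}. Inserting $\chi_j^p$ and using the Schauder bound $\|\tr_1\nabla W_+^{\pi,j}[\Pi_j^p h]\|_{1+\alpha}\le C\|\Pi_j^p h\|_{4+\alpha}$ for the frozen Dirichlet problem, every prefactor of the form $\chi_j^p(c-c(x_j^p))$, with $c$ a coefficient of $\B_1$, is uniformly small once $p$ is large enough by uniform continuity. The remaining commutator terms involving derivatives of $\Pi_j^p$ produce only lower-order norms that are absorbed into $(\mu/4)\|\Pi_j^p h\|_{4+\alpha}+K\|h\|_{4+\alpha'}$ via the interpolation property~\eqref{interpolation}.

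The core of the proof is the estimate of $S_2$. Setting $u_+:=\chi_j^p(W_+^{\pi,j}[\Pi_j^p h]-\Pi_j^p W_+^\pi[h])$ and using $(1-\chi_j^p)\Pi_j^p=0$, one verifies that $u_+$ solves a Dirichlet problem
\[
\A_1^\pi(f_0,h_0)u_+=R_{\mathrm{int}}\ \text{in }\Omega_+,\qquad u_+=0\ \text{on }\Gamma_0,\qquad u_+=R_{\mathrm{bd}}\ \text{on }\Gamma_1,
\]
where $R_{\mathrm{int}}=(\A_1^\pi(f_0,h_0)-\A_{1,j}^\pi(f_0,h_0))[\chi_j^p W_+^{\pi,j}[\Pi_j^p h]]$ plus localization commutators containing $(\chi_j^p)^{(k)}$ and $(\Pi_j^p)^{(k)}$, and
\[
R_{\mathrm{bd}}=\gamma_h\chi_j^p\bigl((1+h_0'^2(x_j^p))^{-3/2}-(1+h_0'^2)^{-3/2}\bigr)\Pi_j^p h''+(\text{lower order in }h).
\]
The Schauder estimate~\eqref{Schauder} applied to this Dirichlet problem (or rather its diffraction counterpart with $v_-\equiv 0$ on $\Omega_-$) gives $\|u_+\|_{2+\alpha}^{\Omega_+}\le C(\|R_{\mathrm{bd}}\|_{2+\alpha}+\|R_{\mathrm{int}}\|_\alpha)$, and $\|\tr_1\nabla u_+\|_{1+\alpha}\le\|u_+\|_{2+\alpha}^{\Omega_+}$ bounds $\|S_2\|_{1+\alpha}$. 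The main obstacle is controlling $\|R_{\mathrm{bd}}\|_{2+\alpha}$: the boundary datum involves $\Pi_j^p h''$, whose $h^{2+\alpha}(\s)$-norm carries the full top-order seminorm $\|\Pi_j^p h\|_{4+\alpha}$, so one must exploit that its prefactor is a $C^{2+\alpha}(\s)$-function vanishing at $x_j^p$ and supported in $\supp\chi_j^p$, whose norm can therefore be made smaller than any prescribed $\mu/C$ by choosing $p$ large (uniform continuity of the coefficient combined with a standard scaling argument for the $\alpha$-H\"older seminorm on a shrinking interval). Doing so for $R_{\mathrm{bd}}$ and handling $R_{\mathrm{int}}$ exactly as in Theorem~\ref{T2}, then absorbing all intermediate $\|h\|_{3+\alpha'}$-norms via~\eqref{interpolation}, produces the claimed estimate.
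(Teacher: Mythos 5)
Your overall scheme is the right one and matches the approach behind Theorem~\ref{T2} (which the paper cites as the model): decompose $\Pi_j^p\p_h\Phi_2^\pi[h]-\bB_j[\Pi_j^p h]$ into coefficient-freezing errors and a main nonlocal term, write the main term as the boundary trace of a cutoff difference $u_+=\chi_j^p(W_+^{\pi,j}[\Pi_j^p h]-\Pi_j^p W_+^\pi[h])$, observe that $u_+$ solves a Dirichlet problem, and control $\|u_+\|_{2+\alpha}^{\Omega_+}$ by Schauder estimates. The simplification that there is no parameter $\tau$, no first-order term, and only a Dirichlet problem on $\Omega_+$ is also correctly identified.

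There is, however, one step that would fail as written. You claim that the boundary datum prefactor
\[
a:=\gamma_h\,\chi_j^p\Bigl((1+h_0'^2(x_j^p))^{-3/2}-(1+h_0'^2)^{-3/2}\Bigr)
\]
is ``a $C^{2+\alpha}(\s)$-function \ldots whose norm can therefore be made smaller than any prescribed $\mu/C$ by choosing $p$ large.'' This is false for the $C^{2+\alpha}$-norm: the derivatives of $\chi_j^p$ scale like $2^p,\,2^{2p},\ldots$, so already the term $(\chi_j^p)'\bigl((1+h_0'^2)^{-3/2}\bigr)'$ appearing in $a''$ has $L^\infty$-norm of order $2^p$, and $\|a\|_{C^{2+\alpha}}$ in fact blows up as $p\to\infty$. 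The scaling argument for Hölder seminorms on shrinking intervals makes the \emph{zeroth}-order Hölder seminorm $[a]_{C^\alpha}$ small, but not $[a'']_{C^\alpha}$. What one actually needs -- and what the paper does in the proof of Theorem~\ref{T2} -- is only the $C^0$-smallness $\|a\|_0\to 0$ (by uniform continuity), together with the bilinear product estimate
\[
\|ab\|_{C^{2+\alpha}(\s)}\le C\bigl(\|a\|_{0}\,\|b\|_{C^{2+\alpha}(\s)}+\|a\|_{C^{2+\alpha}(\s)}\,\|b\|_{C^{2+\alpha'}(\s)}\bigr),\qquad 0<\alpha'<\alpha,
\]
applied with $b=\Pi_j^p h''$. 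The first term then carries a small factor in front of $\|\Pi_j^p h\|_{4+\alpha}$, while the second, whose constant depends on $p$ through $\|a\|_{C^{2+\alpha}}$, is absorbed into the $K_4\|h\|_{4+\alpha'}$-remainder (exactly as the terms $\|\chi_j^p(\cdots)\|_0\|\Pi_j^p h\|_{2+\alpha}$ versus $K\|h\|_{2+\alpha'}$ are treated in the displayed Schauder estimate in the proof of Theorem~\ref{T2}). With this replacement, your argument goes through.
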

\begin{proof}
  The proof is similar to that of Theorem \ref{T2} and is therefore omitted.
\end{proof}

The computations in Section \ref{Sec:6} show that, given a $p$-partition of unity $\{\Pi_j^p\}_{1\leq j\leq 2^{p+1}}$ and $1\leq j\leq 2^{p+1}$, the operator   $\bB_{j}$ is  a Fourier multiplier of the form
\begin{align*} 
 \bB_{j}\Big[\sum_{m\in\Z} h_me^{imx}\Big]=&\sum_{m\in\Z} \wt\varphi_mh_me^{imx},
 \end{align*}
the (real) symbol $(\wt\varphi_m)_m$ being given by
\begin{align*} 
\wt\varphi_m:=& -  \frac{kV_h}{\mu_+}\frac{m^3}{\tanh(Dm)},
 \end{align*}
cf. \eqref{FMHR} and \eqref{FMHI}, where $D$ is the positive constant  defined in Section \ref{Sec:6}  (with $(f_*,h_*)$ replaced by $(f_0,h_0)$).

\begin{lemma}\label{L:3K}
Given  $(f_0,h_0)\in\V\cap (h^{4+\alpha}(\s))^2$  and $b_0\in h^{2+\alpha}(\s)$, there exist  constants  $\kappa_2\geq 1$ and $\omega_2>0$ depending only on $(f_0,h_0,b_0)$  
such that for any $p$-partition of unity $\{\Pi_j^p\}_{1\leq j\leq 2^{p+1}}$ with $p\geq 3,$ 
the operators  $\bB_{j}$,     $1\leq j\leq 2^{p+1}$,      satisfy
 \begin{align*} 
&\lambda-\bB_{ j}\in{\rm Isom}(h^{4+\alpha}(\s),h^{1+\alpha}(\s)),\\[1ex]
& \kappa_2\|(\lambda-\bB_{ j})[h]\|_{1+\alpha}\geq  |\lambda|\cdot\|h\|_{1+\alpha}+\|h\|_{4+\alpha}
\end{align*}
for all $h\in h^{4+\alpha}(\s)$ and $\lambda\in\C$ with $\re \lambda\geq \omega_2$.
\end{lemma}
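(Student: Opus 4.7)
The plan is to mimic the proof of Lemma \ref{L:2h}, invoking the Marcinkiewicz-type Fourier multiplier theorem \cite[Theorem 2.1]{JL12} applied to the formal resolvent symbol $\Lambda_m:=(\lambda-\wt\varphi_m)^{-1}$. A substantial simplification compared to Lemma \ref{L:2} occurs because the symbol $\wt\varphi_m=-kV_hm^3/(\mu_+\tanh(Dm))$ is purely real and negative, and its leading order is a clean cubic in $m$; no cancellation between real and imaginary parts is needed in the lower resolvent bound.

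First I would establish uniform control on the symbol. Since $(f_0,h_0)\in\V\cap (h^{4+\alpha}(\s))^2$, the positive quantities $V_h=\gamma_h(1+h_0'^2(x_j^p))^{-3/2}$ and $D$ (the latter built only from $1/(h_0-f_0)^2$ and $h_0'/(h_0-f_0)$ evaluated at $x_j^p$) enjoy bounds $C_0^{-1}\le V_h,D\le C_0$ with $C_0>0$ depending only on $(f_0,h_0,b_0)$ and not on the partition $\{\Pi_j^p\}$ nor on $j$. Using $\tanh(D|m|)\to1$ as $|m|\to\infty$, one obtains (after possibly enlarging $C_0$)
\begin{equation*}
 C_0^{-1}|m|^3\le |\wt\varphi_m|\le C_0|m|^3,\qquad |\wt\varphi_{m+1}-\wt\varphi_m|\le C_0(1+|m|^2),\quad m\in\Z\setminus\{0\}.
\end{equation*}

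Next, set $\omega_2:=2C_0$ and pick $\lambda=\zeta_1+i\zeta_2\in\C$ with $\zeta_1\ge\omega_2$. Since $\wt\varphi_m<0$, one has $\re(\lambda-\wt\varphi_m)=\zeta_1+|\wt\varphi_m|\ge\omega_2+C_0^{-1}|m|^3$, hence $\lambda-\bB_j$ corresponds to a non-vanishing Fourier symbol and admits the formal inverse $\Lambda_m$. Using
\begin{equation*}
 \Lambda_{m+1}-\Lambda_m=\frac{\wt\varphi_{m+1}-\wt\varphi_m}{(\lambda-\wt\varphi_m)(\lambda-\wt\varphi_{m+1})},
\end{equation*}
and arguing exactly as in Steps 1 and 2 of the proof of Lemma \ref{L:2}, I would verify
\begin{equation*}
 \sup_{\zeta_1\ge\omega_2}\sup_{x_j^p\in\s}\sup_{m\in\Z\setminus\{0\}}\bigl(|m|^3|\Lambda_m|+|m|^4|\Lambda_{m+1}-\Lambda_m|+|\lambda|\,|\Lambda_m|+|\lambda|\,|m|\,|\Lambda_{m+1}-\Lambda_m|\bigr)\le C,
\end{equation*}
for a constant $C$ depending only on $C_0$.

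Finally, the first two suprema, combined with \cite[Theorem 2.1]{JL12} (applied with shift $s-r=3$), yield $(\lambda-\bB_j)^{-1}\in\kL(C^{n+\alpha}(\s),C^{n+3+\alpha}(\s))$ for every $n\in\N$ with norm bounded by a multiple of $C$. A density argument (the closure of $C^{m+1+\alpha}(\s)$ in $C^{m+\alpha}(\s)$ is $h^{m+\alpha}(\s)$) then delivers the isomorphism $\lambda-\bB_j\in{\rm Isom}(h^{4+\alpha}(\s),h^{1+\alpha}(\s))$ together with the bound $\|h\|_{4+\alpha}\le\kappa_2\|(\lambda-\bB_j)h\|_{1+\alpha}$. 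The third and fourth suprema, analyzed as in Step 2 of the proof of Lemma \ref{L:2}, produce the complementary estimate $|\lambda|\cdot\|h\|_{1+\alpha}\le\kappa_2\|(\lambda-\bB_j)h\|_{1+\alpha}$, which together furnish the resolvent estimate claimed in the lemma. The only real point requiring care is the uniformity of all constants with respect to $j$, $p$, $\tau$, and $\lambda$; this is guaranteed because the dependence of $\wt\varphi_m$ on the partition enters solely through the point $x_j^p\in\s$, so the suprema can be taken over all of $\s$ without affecting the bounds.
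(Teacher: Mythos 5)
Your proposal is correct and follows essentially the same route as the paper: the paper's proof of Lemma~\ref{L:3K} simply states that it "follows analogously as in the proof of Lemma~\ref{L:2h}," and your argument is exactly the transcription of that strategy (bound the symbol by $C_0^{-1}|m|^3\le|\wt\varphi_m|\le C_0|m|^3$ and $|\wt\varphi_{m+1}-\wt\varphi_m|\le C_0(1+m^2)$, set $\omega_2=2C_0$, verify the four Marcinkiewicz suprema with shift $s-r=3$, invoke \cite[Theorem~2.1]{JL12}, then pass to little H\"older spaces by density). Your observation that $\wt\varphi_m$ is real and negative, so no cancellation between real and imaginary parts of the symbol is needed, is a genuine simplification over the two-parameter symbol in Lemma~\ref{L:2}/\ref{L:2h} and is worth stating; a minor slip is that you list uniformity in "$\tau$" among the things to check, whereas $\bB_j$ has no $\tau$-dependence in this section.
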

\begin{proof}
This follows analogously as in the  proof of Lemma \ref{L:2h}.
\end{proof}

We  now  establish the generator property for $\p_{h}\Phi_2(0,(f_0,h_0)).$
\begin{thm}\label{T:c2K}
If  $(f_0,h_0)\in\V\cap (h^{4+\alpha}(\s))^2$  and $b_0\in h^{2+\alpha}(\s)$, then
 \begin{align*}
 -\p_{h}\Phi_2(0,(f_0,h_0))\in\kH\big(h^{4+\alpha}(\s), h^{1+\alpha}(\s)\big).
\end{align*}
\end{thm}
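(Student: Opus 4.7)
The plan is to mirror the argument used for Theorem~\ref{T:c1K}. First, combining estimate~\eqref{Claim5K} with the perturbation result \cite[Theorem I.1.3.1 (ii)]{Am95} reduces the task to proving that $-\p_h\Phi_2^\pi \in \kH(h^{4+\alpha}(\s), h^{1+\alpha}(\s))$, since the difference $\p_h\Phi_2(0,(f_0,h_0)) - \p_h\Phi_2^\pi$ is absorbable as a lower-order perturbation.

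To obtain the corresponding resolvent estimate, I would apply Theorem~\ref{T2K} with $\mu$ chosen proportional to $1/\kappa_2$ from Lemma~\ref{L:3K}, combined with the norm equivalence of Remark~\ref{R:1}, arguing exactly as in the proof of Theorem~\ref{T:c1}: first localize $(\lambda - \p_h\Phi_2^\pi)$ against the $p$-partition of unity, invoke \eqref{DE2} to approximate the localized pieces by $\lambda - \bB_j$, apply the bound \eqref{14h} for each piece, and absorb the remainder involving $\|h\|_{4+\alpha'}$ via the interpolation property~\eqref{interpolation} and Young's inequality. This would yield constants $\wt\kappa_2 \geq 1$ and $\wt\omega_2 > 0$ such that
\[
\wt\kappa_2 \|(\lambda - \p_h\Phi_2^\pi)[h]\|_{1+\alpha} \geq |\lambda|\cdot\|h\|_{1+\alpha} + \|h\|_{4+\alpha}
\]
for every $h \in h^{4+\alpha}(\s)$ and every $\lambda \in \C$ with $\re\lambda \geq \wt\omega_2$, which in particular gives injectivity of $\lambda - \p_h\Phi_2^\pi$ in this half-plane.

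To complete the generator property, it remains to show that $\lambda - \p_h\Phi_2^\pi$ is surjective for some $\lambda$ with large real part; in view of the resolvent estimate and \cite[Remark I.1.2.1]{Am95}, it suffices to prove it is Fredholm of index zero. I would emulate the Fredholm decomposition from Theorem~\ref{T:c1K}: write $\p_h\Phi_2^\pi = \Psi_1 \circ \Psi_2$, where $\Psi_2: h^{4+\alpha}(\s) \to h^{2+\alpha}(\s)$ is the differential operator $h \mapsto -\gamma_h(1+h_0'^2)^{-3/2}h''$, and $\Psi_1: h^{2+\alpha}(\s) \to h^{1+\alpha}(\s)$ is the Dirichlet-to-Neumann-type operator sending $g$ to $-\B_1(f_0,h_0)W$, with $W$ denoting the unique $\A^\pi_1(f_0,h_0)$-harmonic function in $\Omega_+$ that equals $g$ on $\Gamma_1$ and vanishes on $\Gamma_0$. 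The operator $\Psi_2$ is plainly Fredholm of index zero (its kernel consists of constants and its range of mean-zero functions). For $\Psi_1$, the Fredholm property of index zero would follow from an argument parallel to Proposition~\ref{L:DN1}, now adapted to the simpler single-domain Dirichlet setting, or equivalently from the solvability of the boundary value problem \eqref{DP2} already established at the end of the proof of Theorem~\ref{T:c2}. By \cite[Theorem 13.1]{TL80}, $\p_h\Phi_2^\pi$ is then Fredholm of index zero, and combined with the resolvent estimate this yields bijectivity of $\lambda - \p_h\Phi_2^\pi$ for $\re\lambda \geq \wt\omega_2$.

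The hardest part is the Fredholm analysis of $\Psi_1$, but this is essentially contained in the Dirichlet-Neumann arguments already developed for Proposition~\ref{L:DN1} and in the concluding part of the proof of Theorem~\ref{T:c2}; it should adapt straightforwardly to the present single-domain context since no coupling across $\Gamma_0$ is involved. Everything else is a transcription of the scheme of Theorem~\ref{T:c1K} using the already-established ingredients Theorem~\ref{T2K} and Lemma~\ref{L:3K}.
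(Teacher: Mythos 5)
Your proposal is correct and follows essentially the same route the paper indicates for this theorem (namely, a direct transplant of the Theorem~\ref{T:c1K} scheme to the second component): absorb the lower-order difference via \eqref{Claim5K} and \cite[Theorem I.1.3.1 (ii)]{Am95}, obtain the resolvent estimate for $\p_h\Phi_2^\pi$ by localizing with Theorem~\ref{T2K}, Lemma~\ref{L:3K}, Remark~\ref{R:1}, and interpolation, and then settle surjectivity through the factorization $\p_h\Phi_2^\pi = \Psi_1\circ\Psi_2$ with both factors Fredholm of index zero. The only ingredient you flag as potentially delicate, the Fredholm property of the single-domain Dirichlet-to-Neumann map $\Psi_1$, is in fact simpler than in the diffraction case: one shows $\lambda-\Psi_1$ is an isomorphism for $\lambda>0$ by solving the oblique-derivative problem obtained from \eqref{DP2} with the weight $(g\rho_+-\tr_1\p_y v_+^*/(h_0-f_0))^{-1}$ replaced by $1$ (so no Rayleigh--Taylor condition is needed), and then the compact-perturbation argument used in Theorem~\ref{T:c1K} yields index zero.
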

\begin{proof}
The proof follows by combining the arguments used to establish Theorems \ref{T:c2} and \ref{T:c1K}.
 \end{proof}

We conclude this section with the proof of Theorem \ref{MT2}.
\begin{proof}[{\bf Proof of Theorem \ref{MT2}}]
 In view of Theorems \ref{T:c1K}, \ref{T:c2K}, relation \eqref{Claim3K} and \cite[Theorem I.1.6.1 and Remark I.1.6.2]{Am95}),
 we get that 
 $$-\p_{(f,h)}\Phi(0,(f_0,h_0))\in\kH\big((h^{4+\alpha}(\s))^2, (h^{1+\alpha}(\s))^2\big)$$
 for all $(f_0,h_0)\in\V\cap (h^{4+\alpha}(\s))^2$  and $b_0=b(0)\in h^{2+\alpha}(\s).$
 The remaining part of the proof is identical to that of Theorem \ref{MT1}.
\end{proof}

\bigskip

{\bf Acknowledgment.} 
We thank the anonymous referee for carefully reading the manuscript.

\appendix
\section{}\label{App}

We collect here some explicit formulae for operators used in the previous sections.
Given $(f,h)\in\V$, it follows readily from the definitions in Section~\ref{Sec:3} of the operators $\A(f)$, $\A(f,h)$, $\B(f)$, and $\B(f,h)$ that
\begin{align*}
\A(f)=&\p_{xx}-2\frac{(1+y)f'}{f-d}\p_{xy}+\frac{(1+y)^2f'^2+1}{(f-d)^2}\p_{yy}-(1+y)\frac{(f-d)f''- 2f'^2}{(f-d)^2}\p_y,\\
\A(f,h)=&\p_{xx}-2\frac{yh'+(1-y)f'}{ h-f}\p_{xy}+\frac{\big(yh'+(1-y)f'\big)^2+1}{(h-f)^2}\p_{yy}-\Big[\frac{yh''+(1-y)f''}{h-f}\\
&-2\frac{(h'-f')\big(yh'+(1-y)f'\big)}{(h-f)^2}\Big]\p_y,\\
\B(f)=&\frac{k}{\mu_-}\Big(\frac{1+f'^2}{ f-d}\tr_0 \p_y-f'\tr_0 \p_x\Big),\qquad \B(f,h)=\frac{k}{\mu_+}\Big(\frac{1+f'^2}{h- f}\tr_0 \p_y-f'\tr_0 \p_x\Big),\\
\B_1(f,h)=&\frac{k}{\mu_+}\Big(\frac{1+h'^2}{h- f}\tr_1 \p_y-h'\tr_1 \p_x\Big),
\end{align*}
and therefore their (partial) derivatives at a fixed $(f_*,h_*)\in\V$  are given by
\begin{align*}
\p_f\A(f_*)[f]=&2\Big[\frac{(1+y)f_*'f}{(f_*-d)^2}-\frac{(1+y)f'}{f_*-d}\Big]\p_{xy}+2\Big[\frac{(1+y)^2f_*'f'}{(f_*-d)^2}-\frac{(1+y)^2f_*'^2+1}{(f_*-d)^3}f\Big]\p_{yy}\\
&-(1+y)\Big[\frac{(f_*-d)f''+f_*''f- 4f_*'f'}{(f_*-d)^2}-2\frac{(f_*-d)f_*''- 2f_*'^2}{(f_*-d)^3}f\Big]\p_y,\\
\p_f\A(f_*,h_*)[f]=& -2\Big[\frac{ (1-y)f'}{ h_*-f_*}+\frac{yh_*'+(1-y)f_*'}{ (h_*-f_*)^2}f\Big]\p_{xy}+\Big[2\frac{\big(yh_*'+(1-y)f_*'\big)^2+1}{(h_*-f_*)^3}f\\
&+\frac{(1-y)\big(yh_*'+(1-y)f_*'\big)f'}{(h_*-f_*)^2}\Big]\p_{yy}-\Big[\frac{(1-y)f''}{h_*-f_*}+\frac{yh_*''+(1-y)f_*''}{(h_*-f_*)^2}f\\
&-2\frac{ (1-2y)(h_*'-f_*')-f_*'}{(h_*-f_*)^2}f'-4\frac{(h_*'-f_*')\big(yh_*'+(1-y)f_*'\big)}{(h_*-f_*)^3}f\Big]\p_y,
\end{align*}
\begin{align*}
\p_h\A(f_*,h_*)[h]=&2\Big[\frac{yh_*'+(1-y)f_*'}{ (h_*-f_*)^2}h-\frac{yh'}{ h_*-f_*}\Big]\p_{xy}+2\Big[\frac{y\big(yh_*'+(1-y)f_*'\big)}{(h_*-f_*)^2}h'\\
&-\frac{\big(yh_*'+(1-y)f_*'\big)^2+1}{(h_*-f_*)^3}h\Big]\p_{yy}-\Big[\frac{yh''}{h_*-f_*}-\frac{yh_*''+(1-y)f_*''}{(h_*-f_*)^2}h \\
&-2\frac{2yh_*'+(1-2y)f_*'}{(h_*-f_*)^2}h'-4\frac{(h_*'-f_*')\big(yh_*'+(1-y)f_*'\big)}{(h_*-f_*)^3}h\Big]\p_y,
\end{align*}
respectively by
\begin{align*}
\p_f\B(f_*)[f]=&\frac{k}{\mu_-}\Big[\Big(\frac{2f_*'f'}{ f_*-d}-\frac{1+f_*'^2}{ (f_*-d)^2}f\Big)\tr_0 \p_y-f' \tr_0 \p_x\Big],\\
\p_f\B(f_*,h_*)[f]=&\frac{k}{\mu_+}\Big[\Big(\frac{2f_*'f'}{h_*- f_*} +\frac{1+f_*'^2}{(h_*- f_*)^2}f\Big)\tr_0\p_y-f'\tr_0 \p_x\Big],
\end{align*}
\begin{align*}
\p_h\B(f_*,h_*)[h]=&-\frac{k}{\mu_+} \frac{1+f_*'^2}{(h_*- f_*)^2}h\tr_0 \p_y ,\\
\p_h\B_1(f_*,h_*)[h]=&\frac{k}{\mu_+}\Big[\Big(\frac{2h_*'}{h_*- f_*}h' -\frac{1+h_*'^2}{(h_*- f_*)^2}h\Big)\tr_1 \p_y-h'\tr_1\p_x\Big]
\end{align*}
for $(f,h)\in \big(h^{2+\alpha}(\s)\big)^2.$

\end{document}